\newtheorem{theorem}{Theorem}[section]
\newtheorem{lemma}[theorem]{Lemma}
\newtheorem{remark}[theorem]{Remark}
\let\originalleft\left
\let\originalright\right
\renewcommand{\left}{\mathopen{}\mathclose\bgroup\originalleft}
\renewcommand{\right}{\aftergroup\egroup\originalright}
\renewcommand{\d}{\/\mathrm{d}\/}
\def\w{\textbf{W}^{\varepsilon}_{{\theta}^{\varepsilon}}}
\def\L{\mathbb{L}}
\def\C{\mathrm{C}}
\def\f{\boldsymbol{f}}
\def\v{\boldsymbol{v}}
\def\V{\mathbb{v}}
\def\w{\boldsymbol{w}}
\def\G{\boldsymbol{G}}
\def\V{\mathbb{V}}
\def\wi{\widetilde}
\def\u{\mathrm{U}}
\def\P{\mathrm{P}}
\def\u{\boldsymbol{u}}
\def\H{\mathbb{H}}
\def\n{\boldsymbol{n}}
\newcommand{\R}{\mathbb{R}}
\renewcommand{\d}{\/\mathrm{d}\/}
\newcommand{\Addresses}{{
		\footnote{
			
			\noindent \textsuperscript{1,2}Department of Mathematics, Indian Institute of Technology Roorkee-IIT Roorkee,
			Haridwar Highway, Roorkee, Uttarakhand 247667, INDIA.\par\nopagebreak
			\noindent 	\textit{e-mail:} \texttt{Pardeep Kumar: pkumar3@ma.iitr.ac.in.}
			
			\noindent  \textit{e-mail:} \texttt{Manil T. Mohan: manilfma@iitr.ac.in, maniltmohan@gmail.com.}
			
			\noindent \textsuperscript{*}Corresponding author.

			\textit{Key words:} Convective Brinkman-Forchheimer equations, inverse source problem, final overdetermination, Schauder's fixed point theorem.
			
			Mathematics Subject Classification (2010): Primary 35R30; Secondary 35Q35, 35Q30.

}}}
\begin{document}
	
	
	\title[An inverse problem for 2D and 3D convective Brinkman-Forchheimer equations]{Well-posedness of an inverse problem for two and three dimensional convective Brinkman-Forchheimer equations with the final overdetermination
		\Addresses}
	\author[P. Kumar and M. T. Mohan ]{Pardeep Kumar\textsuperscript{1} and Manil T. Mohan\textsuperscript{2*}}

	\maketitle
	
	\begin{abstract}
		In this article, we study an inverse problem for the following convective Brinkman-Forchheimer (CBF) equations:
		\begin{align*}
			\boldsymbol{u}_t-\mu \Delta\boldsymbol{u}+(\boldsymbol{u}\cdot\nabla)\boldsymbol{u}+\alpha\boldsymbol{u}+\beta|\boldsymbol{u}|^{r-1}\boldsymbol{u}+\nabla p=\boldsymbol{F}:=\boldsymbol{f} g, \ \ \  \nabla\cdot\boldsymbol{u}=0,
		\end{align*}
		in bounded domains $\Omega\subset\mathbb{R}^d$ ($d=2,3$) with smooth boundary, where $\alpha,\beta,\mu>0$ and $r\in[1,\infty)$. The CBF equations describe the motion of incompressible fluid flows in a saturated porous medium.	The inverse problem under our consideration consists of reconstructing the vector-valued velocity function $\boldsymbol{u}$, the 	pressure field $p$ and the vector-valued function $\boldsymbol{f}$. We have proved the well-posedness result (existence, uniqueness and stablility) for the inverse problem for the 2D and 3D CBF equations with the final overdetermination condition  using Schauder's fixed point theorem for arbitrary smooth initial data. The well-posedness results hold for $r\geq 1$ in two dimensions and for $r \geq 3$ in three dimensions. The global solvability results available in the literature helped us to obtain the uniqueness and stability results for the model with fast growing nonlinearities.  
	\end{abstract}

	\section{Introduction}\label{sec1}\setcounter{equation}{0}
	The main objective of this work is to discuss the well-posedness of an inverse problem to convective Brinkman-Forchheimer (CBF) equations. Physically, CBF equations describe the motion of incompressible fluid flows in a saturated porous medium.  The CBF equations in a bounded domain $\Omega\subset\R^d$ ($d=2,3$) with a smooth boundary $\partial\Omega$ (at least $\C^2$-boundary) are given by
	\begin{align}
		\u_t-\mu \Delta\u+(\u\cdot\nabla)\u+\alpha\u+\beta|\u|^{r-1}\u+\nabla p=\boldsymbol{F}&:=\f g, \ \text{ in } \ \Omega\times(0,T), \label{1a}\\ \nabla\cdot\u&=0, \ \ \ \ \text{ in } \ \Omega\times(0,T),\label{1b}
	\end{align}
	with initial condition
	\begin{align}\label{1c}
		\u=\u_0, \ \text{ in } \ \Omega \times \{0\},
	\end{align}
	and	boundary condition
	\begin{align}\label{1d}
		\u=\boldsymbol{0},\ \ \text{ on } \ \partial\Omega\times[0,T).
	\end{align}
	Here $\u(x,t) \in \R^d$ represents the velocity field at position $x$ and time $t$, $p(x,t)\in\R$ denotes the pressure field and $\boldsymbol{F}(x,t)\in\R^d$ stands for a divergence free (that is, $\nabla\cdot\boldsymbol{F}=0$) external force. The constant $\mu$ denotes the positive Brinkman coefficient (effective viscosity), the positive constants $\alpha$ and $\beta$ stands the Darcy coefficient (permeability of porous medium) and the Forchheimer coefficient (proportional to the porosity of the material), respectively. The absorption exponent $r\in[1,\infty)$ and the cases, $r=3$ and $r>3$, are known as the critical exponent and the fast growing nonlinearity, respectively.  For $\alpha=\beta=0$, we obtain the classical  Navier-Stokes equations (NSE). Thus, one can consider the equations \eqref{1a}-\eqref{1d} as a modification (by introducing an absorption term $\alpha\u+\beta|\u|^{r-1}\u$) of the classical NSE. Thus, one may refer the model \eqref{1a} as NSE with damping.  In order to obtain the uniqueness of the pressure $p$, one can impose the condition $\int_{\Omega}p(x,t)\d x=0, $ for $t\in (0,T)$.   The model given in \eqref{1a}-\eqref{1d} is recognized to be more accurate when the flow velocity is too large for the Darcy's law to be valid alone, and apart from that, the porosity is not too small, so that we call these types of models as \emph{non-Darcy models} (cf. \cite{PAM}).   	  It has been proved  in Proposition 1.1, \cite{KWH}  that the critical homogeneous CBF equations have the same scaling as NSE only when $\alpha=0$ and no scale invariance property for other values of $\alpha$ and $r$.

	Let us now discuss some global solvability results available in the literature for the system \eqref{1a}-\eqref{1d} (direct problem). The Navier-Stokes problem in bounded domains with compact boundary, modified by the absorption term $|\u|^{r-1}\u$, for $r>1$ is considered in \cite{SNA}. The existence of Leray-Hopf weak solutions  for any dimension $d\geq 2$ and its uniqueness for $d=2$ is established in \cite{SNA}.  The existence of regular dissipative solutions and global attractors for the system \eqref{1a}-\eqref{1d}  in three dimensions for the fast growing nonlinearites is proved in \cite{KT2}. As a global smooth solution exists  for the system \eqref{1a}-\eqref{1d} with $r>3,$  the energy equality is satisfied by the weak solutions. The authors in \cite{CLF} proved that all weak solutions of the 3D critical CBF equations  ($r=3$) in  bounded domain satisfy the energy equality (see \cite{KWH} for the case of periodic domains).  The author in \cite{MTM4} proved the existence and uniqueness of a global weak solution in the Leray-Hopf sense satisfying the energy equality to the system \eqref{1a}-\eqref{1d}	 (in 3D,  $r>3$ for all values of $\beta$ and $\mu $, and   $r=3$ for $2\beta\mu \geq 1$). The monotonicity as well as the demicontinuity properties of the linear and nonlinear operators and the Minty-Browder technique were exploited in the proofs. The stochastic counterpart of the problem was considered in \cite{MTM6}.  For the global solvability results  for 3D CBF equations and related models in the whole space as well as in periodic domains, the interested readers are referred to see \cite{ZCQJ,ZZXW,YZ,KWH,KWH1}, etc.

	Though the direct problem is important, it's study requires a significant amount of information of the physical parameters such as the Brinkman coefficient $\mu$, Darcy coefficient $\alpha$ and Forchheimer coefficient $\beta$ and the forcing term $\boldsymbol{F}:=\f g$. In such modeling, it is better to consider the inverse source problems but posing an inverse problem requires some additional information of the solution besides the given initial and boundary conditions. 	In this work, we will be using the trace of the velocity $\u$ and the pressure gradient $\nabla p$, prescribed at the final moment $t=T$ of the segment $[0,T]$, as additional information.  We assume that $\boldsymbol{F}$, the vector-valued external forcing appearing in \eqref{1a}, can be written in the form  $$\boldsymbol{F}(x,t):=\f(x)g(x,t),$$ where the vector-valued function $\f$ is unknown and $g$ is a given scalar function such that  $g$, $g_t$ are continuous on $\overline{\Omega} \times[0,T]$.	We pose the nonlinear inverse problem of finding the vector-valued velocity function $\u$, the pressure gradient $\nabla p$ and the vector-valued function $\f$, satisfying the system \eqref{1a}-\eqref{1d}, with the final overdetermination condition:
	\begin{align}\label{1e}
		\u(x,T)=\boldsymbol{\varphi}(x), \ \ \ \ \nabla p(x,T)=\nabla\psi(x), \ \ \ x \in \Omega,
	\end{align} 
	where the functions $\u_0, \boldsymbol{\varphi}, \nabla \psi,$ and $g$ are given.\

	Inverse problems with the final overdetermination have been well studied for parabolic equations (see \cite{IB,NLG,VI,VI1,PT}, etc and the references therein). The works \cite{IB,NLG, VI,VI1,POV,PT}, etc assumed that the initial data is smooth (at least in $\H^2(\Omega)$).  The authors  in \cite{VP} established the solvability results of an inverse problem to the nonlinear NSE with the final overdetermination data in  three dimensions using Schauder's fixed point theorem.	Based on the weak solution of the NSE,  the existence results of an inverse problem to the NSE  with the integral overdetermination conditions as well as with the final overdetermination data in both two and three dimension using Schauder's fixed point theorem in \cite{POV}. Here it should be noted that they have assumed that the given data is small and it satisfies the following conditions:
	\begin{align}\label{1f}
		|g(x,T)|\geq g_T >0 \  \text{ for some positive constant} \ g_T \ \text{for} \ x\in  \overline{\Omega},
	\end{align}
	\begin{align}\label{1g}
		\u_0, \ \boldsymbol{\varphi} \in \H^2(\Omega) \cap \V , \ \ \ \nabla \psi \in \G(\Omega),\
	\end{align}
	where the function spaces are defined in subsection \ref{sub2.1}. 	However, neither  uniqueness nor stability is considered in \cite{POV}. 	The well-posedness  of an inverse problem for 2D NSE  with the final overdetermination data using Tikhonov fixed point theorem is discussed  in \cite{JF}. The authors   have assumed that initial data $\u_0 \in \H$ and the viscosity constant is sufficiently large. In \cite{MC}, the authors considered an inverse problem of finding a spatially varying factor in a source term in the non-stationary linearized NSE  from the observation data in an arbitrarily fixed sub-domain over some time interval. They have proved the Lipschitz stability based on the new Carleman estimate for the linearized NSE, provided the $t$-dependent factor satisfies a non-degeneracy condition. For an extensive study on different inverse problems for Navier-Stokes equations and related models, the interested readers are referred to see \cite{MBFC,AYC,AYC1,JFMD,JFGN,OYMY,YJJF,AIK,PKKK,RYL}, etc and the references therein.
	
	By a solution of the inverse problem  \eqref{1a}-\eqref{1e}, we mean by a triple $(\u,\nabla p,\f)$ such that $$\u\in\mathrm{H}^1(0,T;\V)\cap\mathrm{L}^{\infty}(0,T;\H^2(\Omega)\cap\V),\ \nabla p(\cdot,t)\in\G(\Omega),\ \f\in\L^2(\Omega),$$ for any $t\in[0,T]$ and it continuously depends on $t$ in the $\L^2$-norm on the segment $[0,T]$, and in addition, all the relations \eqref{1a}-\eqref{1e} hold. In order to prove the existence of solutions for the above  formulated inverse problem, we use the method developed in \cite{POV}. Note that the work \cite{POV} does not discuss the uniqueness or stability of solutions.	The aim of this article is to prove 
	\begin{enumerate}
		\item [(i)] the existence of a solution and its uniqueness, 
		\item [(ii)] the stability of the solution in the norm of corresponding functions, 
	\end{enumerate}
	to the inverse problem \eqref{1a}-\eqref{1e} under the assumptions \eqref{1f}-\eqref{1g}, using Schauder's fixed point theorem for arbitrary smooth initial data. In contrast to the results obtained in \cite{JF,POV}, etc for NSE, for the CBF equations \eqref{1a}-\eqref{1d} with fast growing nonlinearities, the results  are true for $\mu$ satisfying \eqref{1.12} or \eqref{1.13}, which is independent of the data $\boldsymbol{\varphi}$. To the best of our knowledge, there are no results available in the literature on the inverse problem for CBF equations and this work appears to be the first one which discusses the well-posedness of an inverse problem for CBF equations or NSE with damping. 
	
	We point out here that the method applied in \cite{JF} (for the initial data $\u_0\in\H$) may be applicable for the case $d=2, \ r\in[1,3]$ only, due a technical difficulty in working with bounded domains. In  bounded domains,  $\mathrm{P}_{\H}(|\u|^{r-1}\u)$ ($\mathrm{P}_{\H}:\L^2(\Omega)\to\H$ is the Helmholtz-Hodge orthogonal projection, see subsection \ref{sub2.1}) need not be zero on the boundary, and $\mathrm{P}_{\H}$ and $-\Delta$ are not necessarily commuting (for a counter example, see Example 2.19, \cite{JCR4}). Moreover, $-\Delta\u\cdot\n\big|_{\partial\Omega}\neq 0$ in general and the term with pressure will not disappear (see \cite{KT2}), while taking inner product with $-\Delta\u$ in \eqref{1a}. Therefore, the equality (\cite{KWH})
	\begin{align}\label{3}
		&\int_{\Omega}(-\Delta\u(x))\cdot|\u(x)|^{r-1}\u(x)\d x\nonumber\\&=\int_{\Omega}|\nabla\u(x)|^2|\u(x)|^{r-1}\d x+\frac{r-1}{4}\int_{\Omega}|\u(x)|^{r-3}|\nabla|\u(x)|^2|^2\d x,
	\end{align}
	may not be useful in the context of bounded domains. The case of $\u_0\in\H$ will be addressed in a future work. 
	
	Let us now state the main results of this paper. Let $\mathcal{D}$ be a subset of $\L^2(\Omega)$ defined by
	\begin{align*}
		\mathcal{D}:=\left\{\f \in \L^2(\Omega) \ : \ \|\f\|_{\L^2}\leq 1\right\}.
	\end{align*}
	Next, we define the nonlinear operator $\mathcal{A}:\mathcal{D} \to \L^2(\Omega)$ by
	\begin{align}\label{1h}
		(\mathcal{A}\f)(x):=\u_t(x,T), \ \text{ for }\ x \in \Omega,
	\end{align}
	where $\u(x,T)$ has been found via the unique solution $\u(x,t)$ of the direct problem \eqref{1a}-\eqref{1d}.	Another nonlinear operator $\mathcal{B}:\mathcal{D} \to \L^2(\Omega)$ defined by
	\begin{align}\label{1i}
		(\mathcal{B}\f)(x):=\frac{1}{g(x,T)}(\mathcal{A}\f)(x), \ \text{ for }\  x \in \Omega,
	\end{align}
	complements careful analysis of the nonlinear operator equation of the second kind for $\f$:
	\begin{align}\label{1j}
		\f=\mathcal{B}\f:=\frac{1}{g(x,T)}\left(\mathcal{A}\f+(\boldsymbol{\varphi} \cdot \nabla)\boldsymbol{\varphi}+\nabla \psi-\mu \Delta \boldsymbol{\varphi}+\alpha \boldsymbol{\varphi}+\beta|\boldsymbol{\varphi}|^{r-1}\boldsymbol{\varphi}\right).
	\end{align}
	The following result verifies the relation between the solvability of the inverse problem \eqref{1a}-\eqref{1e} and the nonlinear operator equation of the second kind \eqref{1j}.
	\begin{theorem}\label{thm1}
		Let $\Omega \subset \mathbb{R}^d \ (d=2,3)$ be a bounded domain with smooth boundary $\partial\Omega$, $\u_0, \boldsymbol{\varphi} \in \H^2(\Omega) \cap \V , \  \nabla \psi \in \G(\Omega)$ and $g, g_t \in \C(\overline{\Omega} \times[0,T])$ satisfy the assumption \eqref{1f}, and let 
		\begin{align}
			\|\boldsymbol{\varphi}\|_{\widetilde{\L}^4}\left(\frac{2}{\lambda_1}\right)^\frac{1}{4} <\mu, \ \ \ \text{for} \ d=2 \ \text{and} \ r\geq1, \label{1.11}\\
			\frac{r-3}{\lambda_1\mu(r-1)}\left(\frac{2}{\beta\mu (r-1)}\right)^{\frac{2}{r-3}}< \mu, \ \ \ \text{for} \ d=3 \ \text{and} \ r>3, \label{1.12}\\
			\frac{1}{2\beta}< \mu ,  \ \ \ \text{for} \ d=3 \  \text{and} \ r=3,\label{1.13}
		\end{align}
		where $\lambda_1$ is the smallest  eigenvalue of the Stokes operator. If the operator equation \eqref{1j} has a solution lying within $\mathcal{D}$,
		then there exists a solution of the inverse problem \eqref{1a}-\eqref{1e}. Conversely, if the inverse problem \eqref{1a}-\eqref{1e} is solvable, then so is the operator equation \eqref{1j}.
	\end{theorem}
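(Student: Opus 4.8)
The plan is to establish the two implications separately, with the work concentrated in a single uniqueness estimate for a steady Brinkman--Forchheimer type problem, and it is exactly there that the restrictions \eqref{1.11}--\eqref{1.13} on $\mu$ are consumed.

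\textbf{From the operator equation \eqref{1j} to the inverse problem.} Given $\f\in\mathcal{D}$ solving \eqref{1j}, I would first invoke the global solvability of the direct problem recalled in Section~\ref{sec1}: since $\|\f\|_{\L^2}\le 1$ and $g,g_t\in\C(\overline{\Omega}\times[0,T])$, the forcing $\boldsymbol{F}=\f g$ is admissible and, under the stated dimension/exponent restrictions ($d=2,\ r\ge 1$; $d=3,\ r>3$; $d=3,\ r=3$ with $2\beta\mu\ge 1$), the system \eqref{1a}--\eqref{1d} with datum $\u_0\in\H^2(\Omega)\cap\V$ admits a unique solution $\u\in\mathrm{H}^1(0,T;\V)\cap\mathrm{L}^{\infty}(0,T;\H^2(\Omega)\cap\V)$ with $\nabla p(\cdot,t)\in\G(\Omega)$, depending continuously on $t$ in the $\L^2$-norm. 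This triple $(\u,\nabla p,\f)$ has the regularity required of an inverse-problem solution and satisfies \eqref{1a}--\eqref{1d} by construction, so only the overdetermination \eqref{1e} must be checked. I would evaluate \eqref{1a} at $t=T$ (legitimate by the time-regularity just quoted), substitute $(\mathcal{A}\f)(x)=\u_t(x,T)$ from \eqref{1h}, and subtract the identity obtained by multiplying \eqref{1j} through by $g(\cdot,T)$. The terms $\mathcal{A}\f$ cancel, and $\boldsymbol{w}:=\u(\cdot,T)-\boldsymbol{\varphi}$, $q:=p(\cdot,T)-\psi$ are seen to solve
\begin{align*}
&-\mu\Delta\boldsymbol{w}+(\u(\cdot,T)\cdot\nabla)\u(\cdot,T)-(\boldsymbol{\varphi}\cdot\nabla)\boldsymbol{\varphi}+\alpha\boldsymbol{w}\\
&\qquad+\beta\big(|\u(\cdot,T)|^{r-1}\u(\cdot,T)-|\boldsymbol{\varphi}|^{r-1}\boldsymbol{\varphi}\big)+\nabla q=\boldsymbol{0},\qquad \nabla\cdot\boldsymbol{w}=0\ \text{ in }\Omega,\quad \boldsymbol{w}=\boldsymbol{0}\ \text{ on }\partial\Omega.
\end{align*}

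\textbf{The uniqueness estimate (the crux).} The heart of the matter is to deduce $\boldsymbol{w}=\boldsymbol{0}$. I would pair the steady system with $\boldsymbol{w}$ in $\L^2(\Omega)$: the pressure term vanishes by incompressibility and the boundary condition, $-\mu\Delta$ contributes $\mu\|\nabla\boldsymbol{w}\|_{\L^2}^2$, the $\alpha$-term contributes $\alpha\|\boldsymbol{w}\|_{\L^2}^2\ge 0$, and writing $\u(\cdot,T)=\boldsymbol{\varphi}+\boldsymbol{w}$ the convective part collapses (two of the three trilinear terms vanish by incompressibility) to $\int_{\Omega}(\boldsymbol{w}\cdot\nabla)\boldsymbol{\varphi}\cdot\boldsymbol{w}\d x=-\int_{\Omega}(\boldsymbol{w}\cdot\nabla)\boldsymbol{w}\cdot\boldsymbol{\varphi}\d x$, while the monotonicity of $\boldsymbol{v}\mapsto|\boldsymbol{v}|^{r-1}\boldsymbol{v}$ bounds the Forchheimer difference below by $\tfrac{\beta}{2}\int_{\Omega}|\boldsymbol{\varphi}|^{r-1}|\boldsymbol{w}|^2\d x$. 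This yields $\mu\|\nabla\boldsymbol{w}\|_{\L^2}^2+\tfrac{\beta}{2}\int_{\Omega}|\boldsymbol{\varphi}|^{r-1}|\boldsymbol{w}|^2\d x\le\int_{\Omega}|\boldsymbol{w}|\,|\nabla\boldsymbol{w}|\,|\boldsymbol{\varphi}|\d x$. For $d=2$ I would bound the right-hand side by $\|\boldsymbol{w}\|_{\widetilde{\L}^4}\|\nabla\boldsymbol{w}\|_{\L^2}\|\boldsymbol{\varphi}\|_{\widetilde{\L}^4}$ and then use Ladyzhenskaya's inequality together with the Poincaré inequality in the form $\|\boldsymbol{w}\|_{\widetilde{\L}^4}\le(2/\lambda_1)^{1/4}\|\nabla\boldsymbol{w}\|_{\L^2}$, reducing everything to $\big(\mu-\|\boldsymbol{\varphi}\|_{\widetilde{\L}^4}(2/\lambda_1)^{1/4}\big)\|\nabla\boldsymbol{w}\|_{\L^2}^2\le 0$, which forces $\boldsymbol{w}=\boldsymbol{0}$ by \eqref{1.11}. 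For $d=3,\ r=3$ I would apply Young's inequality with a free weight, splitting the right-hand side into a multiple of $\|\nabla\boldsymbol{w}\|_{\L^2}^2$ plus a multiple of $\int_{\Omega}|\boldsymbol{\varphi}|^2|\boldsymbol{w}|^2\d x$; a weight making the first multiple $<\mu$ and the second $<\tfrac{\beta}{2}$ simultaneously exists precisely when $2\beta\mu>1$, i.e. \eqref{1.13}, and then $\boldsymbol{w}=\boldsymbol{0}$. For $d=3,\ r>3$, a first Young step produces $\tfrac{\mu}{2}\|\nabla\boldsymbol{w}\|_{\L^2}^2+\tfrac1{2\mu}\int_{\Omega}|\boldsymbol{\varphi}|^2|\boldsymbol{w}|^2\d x$; I would then interpolate $\int_{\Omega}|\boldsymbol{\varphi}|^2|\boldsymbol{w}|^2\d x$ by Hölder's inequality with exponents $\tfrac{r-1}{2}$ and $\tfrac{r-1}{r-3}$ so that one factor is $\int_{\Omega}|\boldsymbol{\varphi}|^{r-1}|\boldsymbol{w}|^2\d x$, and by a second Young step absorb that factor into $\tfrac{\beta}{2}\int_{\Omega}|\boldsymbol{\varphi}|^{r-1}|\boldsymbol{w}|^2\d x$; the residual is $\le C\|\boldsymbol{w}\|_{\L^2}^2\le C\lambda_1^{-1}\|\nabla\boldsymbol{w}\|_{\L^2}^2$, and tracking the constants shows the net coefficient of $\|\nabla\boldsymbol{w}\|_{\L^2}^2$ stays $<\mu$ exactly under \eqref{1.12}. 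In every case $\u(\cdot,T)=\boldsymbol{\varphi}$; substituting back, the steady system reduces to $\nabla q=\boldsymbol{0}$, so $\nabla p(\cdot,T)=\nabla\psi$ in $\G(\Omega)$. Hence \eqref{1e} holds and $(\u,\nabla p,\f)$ solves the inverse problem.

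\textbf{The converse, and the main obstacle.} For the converse, let $(\u,\nabla p,\f)$ solve \eqref{1a}--\eqref{1e}. Then $\u$ is the unique solution of the direct problem with this $\f$ (the recipe \eqref{1h} for $\mathcal{A}$ makes sense for any $\f\in\L^2(\Omega)$, the restriction to $\mathcal{D}$ being needed only later for the fixed-point argument), so $\u_t(\cdot,T)=\mathcal{A}\f$; evaluating \eqref{1a} at $t=T$, inserting $\u(\cdot,T)=\boldsymbol{\varphi}$ and $\nabla p(\cdot,T)=\nabla\psi$ from \eqref{1e}, and dividing by $g(\cdot,T)$, nonvanishing by \eqref{1f}, reproduces \eqref{1j} verbatim, so the operator equation is solvable. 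I expect the main obstacle to be the three-dimensional fast-growth case of the uniqueness estimate: closing the energy inequality there requires the delicate Hölder--Young balancing whose break-even point is precisely \eqref{1.12}, and it is the damping term $\beta|\u|^{r-1}\u$ that makes it possible (for the Navier--Stokes equations one would instead need smallness of $\boldsymbol{\varphi}$). A subsidiary point requiring care is the justification of evaluating \eqref{1a} pointwise at $t=T$, which rests on the time-continuity of the direct solution provided by the cited well-posedness results.
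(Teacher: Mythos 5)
Your proposal is correct and follows essentially the same route as the paper: solve the direct problem with the given $\f$, compare the equation at $t=T$ with the operator equation to obtain a steady Brinkman--Forchheimer system for the difference $\u(\cdot,T)-\boldsymbol{\varphi}$, and kill that difference by the same energy estimate (Ladyzhenskaya--Poincar\'e for $d=2$, a Young weight balanced against $2\beta\mu>1$ for $d=r=3$, and the H\"older--Young interpolation of $\int_\Omega|\boldsymbol{\varphi}|^2|\boldsymbol{w}|^2\d x$ against the damping term for $r>3$), with the converse obtained by evaluating the equation at $t=T$ and dividing by $g(\cdot,T)$. The constants and absorption scheme you describe match the paper's \eqref{3d}--\eqref{3l} case by case, so there is nothing further to add.
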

	\begin{remark}
		One can replace the condition in \eqref{1.12} by	$\mu>\frac{1}{2}\max\left\{\frac{1}{\beta},\frac{1}{\sqrt{\lambda_1}}\right\}$ also (see Remark \ref{rem2.1} below)
	\end{remark}
	We are now in a position to state our main result on the well-posedness of
	solutions of the inverse problem \eqref{1a}-\eqref{1e}.
	\begin{theorem}\label{thm2}
		Let the assumptions of Theorem \ref{thm1} hold true.	Then, the following assertions hold for the inverse problem \eqref{1a}-\eqref{1e}.
		\begin{enumerate}
			\item [(i)]  There exists a  solution $\{\u,\nabla p,\f\}$ to the inverse problem \eqref{1a}-\eqref{1e}.
			\item [(ii)]  Let $(\u_i,\nabla p_i,\f_i)$ $(i=1,2)$ be two solutions to the inverse problem \eqref{1a}-\eqref{1e} corresponding to the input data  $(\u_{0i},\boldsymbol{\varphi}_i,\nabla\psi_i,g_i) \ (i=1,2)$. Then there exists a constant $C$ such that
			\begin{align}\label{1k}
				\nonumber\|\u_1-\u_2\|_{\mathrm{L}^\infty(0,T;\H)}&+	\|\u_1-\u_2\|_{\mathrm{L}^2(0,T;\V)}+	\|\u_1-\u_2\|_{\mathrm{L}^{r+1}(0,T;\widetilde{\L}^{r+1})}+\|\f_1-\f_2\|_{\L^2} \nonumber\\&\leq C\big(\|\u_{01}-\u_{02}\|_{\H^2(\Omega) \cap \V}+\|g_1-g_2\|_0+\|(g_1-g_2)_t\|_0\nonumber\\&\quad+\| \nabla (\boldsymbol{\varphi}_1-\boldsymbol{\varphi}_2)\|_{\H}+\| \nabla (\psi_1-\psi_2)-\mu \Delta(\boldsymbol{\varphi}_1-\boldsymbol{\varphi}_2)\|_{\H}\big),
			\end{align}
			where $C$ depends on the input data, $\mu,\alpha,\beta,r$, $T$ and $\Omega$. The uniqueness of solutions follows from \eqref{1k}. 
		\end{enumerate}	
	\end{theorem}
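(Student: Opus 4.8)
The plan is to combine Theorem \ref{thm1} with Schauder's fixed point theorem to get existence, and then to use the energy-type a priori estimates for the direct problem (which underlie the hypotheses \eqref{1.11}--\eqref{1.13}) to get uniqueness and stability. For part (i), I would first show that the operator $\mathcal{B}\colon\mathcal{D}\to\L^2(\Omega)$ defined in \eqref{1j} maps the closed, convex, bounded set $\mathcal{D}=\{\f:\|\f\|_{\L^2}\le 1\}$ into itself. This is where the smallness of the data and the conditions \eqref{1f}--\eqref{1g} enter: one needs to estimate $\|\mathcal{A}\f\|_{\L^2}=\|\u_t(\cdot,T)\|_{\L^2}$ in terms of the norms of $\u_0,\boldsymbol{\varphi},\nabla\psi,g$, using the global solvability of the direct problem \eqref{1a}--\eqref{1d} together with the regularity $\u\in\mathrm{H}^1(0,T;\V)\cap\mathrm{L}^\infty(0,T;\H^2(\Omega)\cap\V)$, and then add the contributions of $(\boldsymbol{\varphi}\cdot\nabla)\boldsymbol{\varphi}$, $\nabla\psi$, $-\mu\Delta\boldsymbol{\varphi}$, $\alpha\boldsymbol{\varphi}$ and $\beta|\boldsymbol{\varphi}|^{r-1}\boldsymbol{\varphi}$, all divided by $g(\cdot,T)$ whose modulus is bounded below by $g_T$ via \eqref{1f}. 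Next I would verify that $\mathcal{B}$ is continuous and that $\mathcal{B}(\mathcal{D})$ is relatively compact in $\L^2(\Omega)$: continuity follows from continuous dependence of the direct solution $\u$ on the forcing $\f g$, and compactness follows from the fact that $\mathcal{A}\f=\u_t(\cdot,T)$ actually lands in a space compactly embedded in $\L^2(\Omega)$ (e.g.\ using the $\H^1$-in-time, $\V$-valued bound to control $\u_t(\cdot,T)$ in $\V$, or a $\mathrm{W}^{1,p}$-type bound), so that $\mathcal{B}(\mathcal{D})$ is bounded in $\V$ hence precompact in $\H\hookrightarrow\L^2(\Omega)$. Schauder's theorem then yields a fixed point $\f=\mathcal{B}\f\in\mathcal{D}$, and by the converse-free direction of Theorem \ref{thm1} this produces a solution $\{\u,\nabla p,\f\}$ of the inverse problem \eqref{1a}--\eqref{1e}.

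For part (ii), I would take two solutions $(\u_i,\nabla p_i,\f_i)$ with data $(\u_{0i},\boldsymbol{\varphi}_i,\nabla\psi_i,g_i)$, set $\w=\u_1-\u_2$, $\nabla q=\nabla p_1-\nabla p_2$, $\h=\f_1-\f_2$, and write the equation satisfied by $\w$:
\begin{align*}
	\w_t-\mu\Delta\w+(\u_1\cdot\nabla)\w+(\w\cdot\nabla)\u_2+\alpha\w+\beta(|\u_1|^{r-1}\u_1-|\u_2|^{r-1}\u_2)+\nabla q=\h g_1+\f_2(g_1-g_2).
\end{align*}
Taking the $\L^2$ inner product with $\w$, the pressure term drops, the monotonicity of $\beta(|\cdot|^{r-1}\cdot)$ gives a nonnegative term, and the troublesome term $((\w\cdot\nabla)\u_2,\w)$ is absorbed using the bound on $\|\boldsymbol{\varphi}_2\|$-type quantities together with the Ladyzhenskaya/Gagliardo--Nirenberg inequalities exactly in the regime forced by \eqref{1.11}--\eqref{1.13}; this is the standard mechanism by which the CBF nonlinearity with $r\ge3$ (in 3D) or $r\ge1$ (in 2D) tames the convective term. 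A Gr\"onwall argument then controls $\|\w\|_{\mathrm{L}^\infty(0,T;\H)}$, $\|\w\|_{\mathrm{L}^2(0,T;\V)}$ and $\|\w\|_{\mathrm{L}^{r+1}(0,T;\widetilde{\L}^{r+1})}$ by $\|\w(0)\|_{\H}=\|\u_{01}-\u_{02}\|_{\H}$ plus $\|\h\|_{\L^2}$-times-data plus $\|g_1-g_2\|_0$-times-data. To close the estimate I then need a matching bound on $\|\h\|_{\L^2}=\|\f_1-\f_2\|_{\L^2}$: subtracting the two instances of the operator equation \eqref{1j},
\begin{align*}
	\h=\frac{1}{g_1(\cdot,T)}\Big(\mathcal{A}\f_1-\mathcal{A}\f_2\Big)+\Big(\text{terms in }\boldsymbol{\varphi}_1-\boldsymbol{\varphi}_2,\ \nabla\psi_1-\nabla\psi_2,\ g_1-g_2\Big),
\end{align*}
and estimating $\|\mathcal{A}\f_1-\mathcal{A}\f_2\|_{\L^2}=\|(\u_1)_t(\cdot,T)-(\u_2)_t(\cdot,T)\|_{\L^2}$ in terms of $\|\w\|$ in the appropriate space (differentiating the equation for $\w$ in time, or reading off $\w_t$ at $t=T$ from the equation together with the $\H^2$-regularity of $\u_i$ and of $\boldsymbol{\varphi}_i$), I obtain a bound of the form $\|\h\|_{\L^2}\le \kappa\,\|\w\|_{(\cdots)}+C(\text{data differences})$. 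Combining this with the Gr\"onwall bound for $\w$ and choosing the constants so that the feedback coefficient is $<1$ (again using \eqref{1.11}--\eqref{1.13}, which make the relevant constants independent of $\boldsymbol{\varphi}$) yields the stability estimate \eqref{1k}; uniqueness is the special case of identical data.

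The main obstacle I anticipate is closing the loop between the $\w$-estimate and the $\h$-estimate, i.e.\ bounding $\|(\u_1)_t(\cdot,T)-(\u_2)_t(\cdot,T)\|_{\L^2}$ by a \emph{small} multiple of the $\w$-norms rather than just by the full regularity norm of $\w$, because a naive bound would give a feedback coefficient that cannot be made $<1$. Handling this requires differentiating the perturbation equation in time and running a second energy estimate on $\w_t$, which in turn forces compatibility/regularity on the data $\u_{0i},\boldsymbol{\varphi}_i$ (hence the $\H^2(\Omega)\cap\V$ hypothesis) and careful use of the $\beta|\u|^{r-1}\u$ dissipation to control terms like $\partial_t(|\u_1|^{r-1}\u_1-|\u_2|^{r-1}\u_2)$; the 3D fast-growth case $r>3$ is the most delicate point, and it is precisely where \eqref{1.12} (equivalently the $\mu$-largeness in the Remark) is used. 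The continuity and compactness of $\mathcal{B}$ in part (i) are comparatively routine once the self-mapping bound is in place, relying only on standard continuous-dependence estimates for the CBF direct problem and a compact Sobolev embedding.
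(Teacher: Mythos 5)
Your proposal follows essentially the same route as the paper: existence via Schauder's fixed point theorem applied to $\mathcal{B}$ on $\mathcal{D}$ (well-definedness, continuity, and compactness through a $\V$-bound on $\mathcal{A}\f$ combined with Rellich's theorem), and stability via energy estimates for the difference $\u_1-\u_2$ tested against the difference and its time derivative, followed by the final-time identity coming from the overdetermination data to bound $\|\f_1-\f_2\|_{\L^2}$ and close the loop. The feedback issue you flag --- bounding $\|(\u_1)_t(\cdot,T)-(\u_2)_t(\cdot,T)\|_{\H}$ so that the resulting coefficient of $\|\f_1-\f_2\|_{\L^2}$ can be absorbed against $g_T$ --- is indeed the delicate point, and the paper addresses it precisely by the time-differentiated energy estimate on the perturbation equation that you describe.
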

	\noindent

	The plan of the paper is as follows: In the next section, we first provide the proof of the relation between solvability the inverse problem \eqref{1a}-\eqref{1e} and the equivalent nonlinear operator equation of second kind (Theorem \ref{thm1}). After that, we obtain a number of a priori estimates which are necessary to handle the inverse problem \eqref{1a}-\eqref{1e}. In the final section, we prove our main result (Theorem \ref{thm2}) by firstly showing the existence of a solution of the equivalent operator equation by using Schauder's fixed point theorem, and then establishing the uniqueness and stability of the solution to the inverse problem. 

	\section{A Priori Estimates}\label{sec4}\setcounter{equation}{0}
	In this section, first we provide a proof of Theorem \ref{thm1}, which transforms original inverse problem \eqref{1a}-\eqref{1e} into an equivalent nonlinear operator equation of second kind \eqref{1j}. Since   sufficiently regular solutions for the  system \eqref{1a}-\eqref{1e} are known, we obtain a number of a priori estimates	for the solutions. These estimates will be used in the next section, where we establish the existence, uniqueness and stability of the solution of our inverse problem (proof of Theorem \ref{thm2}). We start this section by introducing function spaces and standard notations, which will be used throughout the paper. 
	\subsection{Function spaces}\label{sub2.1} Let $\C_0^{\infty}(\Omega;\R^d)$ be the space of all infinitely differentiable functions  ($\R^d$-valued) with compact support in $\Omega\subset\R^d$.  Let us define 
	\begin{align*} 
		\mathcal{V}&:=\{\u\in\C_0^{\infty}(\Omega,\R^d):\nabla\cdot\u=0\},\\
		\mathbb{H}&:=\text{the closure of }\ \mathcal{V} \ \text{ in the Lebesgue space } \L^2(\Omega)=\mathrm{L}^2(\Omega;\R^d),\\
		\mathbb{V}&:=\text{the closure of }\ \mathcal{V} \ \text{ in the Sobolev space } \H_0^1(\Omega)=\mathrm{H}_0^1(\Omega;\R^d),\\
		\widetilde{\L}^{p}&:=\text{the closure of }\ \mathcal{V} \ \text{ in the Lebesgue space } \L^p(\Omega)=\mathrm{L}^p(\Omega;\R^d),
	\end{align*}
	for $p\in(2,\infty]$. Then, under some smoothness assumptions on the boundary, we characterize the spaces $\H$, $\V$, $\widetilde{\L}^p$ and $\widetilde{\L}^\infty$ as 
	$$
	\H=\{\u\in\L^2(\Omega):\nabla\cdot\u=0,\u\cdot\boldsymbol{n}\big|_{\partial\Omega}=0\},$$ where $\boldsymbol{n}$ is the unit outward drawn normal to $\partial\Omega$, and $\u\cdot\n\big|_{\partial\Omega}$ should be understood in the sense of trace in $\H^{-1/2}(\partial\Omega)$ (cf. Theorem 1.2, Chapter 1, \cite{Te}),   with norm  $\|\u\|_{\H}^2:=\int_{\Omega}|\u(x)|^2\d x,
	$
	$$
	\V=\{\u\in\H_0^1(\Omega):\nabla\cdot\u=0\},$$  with norm $ \|\u\|_{\V}^2:=\int_{\Omega}|\nabla\u(x)|^2\d x,
	$ $$\widetilde{\L}^p=\{\u\in\L^p(\Omega):\nabla\cdot\u=0, \u\cdot\boldsymbol{n}\big|_{\partial\Omega}=0\},$$ with norm $\|\u\|_{\widetilde{\L}^p}^p:=\int_{\Omega}|\u(x)|^p\d x$, and $\widetilde{\L}^\infty=\{\u\in\L^\infty(\Omega):\nabla\cdot\u=0, \u\cdot\boldsymbol{n}\big|_{\partial\Omega}=0\},$ with norm $\|\u\|_{\widetilde{\L}^\infty}:=\operatorname*{ess\,sup}\limits_{x \in \Omega}|\u(x)|$, respectively. Let $(\cdot,\cdot)$ denote the inner product in the Hilbert space $\H$. 
	Wherever needed, we assume that $p_0\in\mathrm{H}^1(\Omega)\cap\mathrm{L}^2_0(\Omega),$ where $\mathrm{L}^2_0(\Omega):=\left\{p\in\mathrm{L}^2(\Omega):\int_{\Omega}p(x)\d x=0\right\}$. 
	The norm in the space $\C(\overline\Omega\times [0,T])$ is denoted by $\|\cdot\|_0$, that is,  $\|g\|_0:=\sup\limits_{(x,t)\in\overline\Omega\times [0,T]}|g(x,t)|$.   
	
	\subsubsection{Projection operator}	It is well-known from \cite{DFHM,OAL} that every vector field $\u\in\mathbb{L}^p(\Omega)$, for $1<p<\infty$ can be uniquely represented as $\u=\v+\nabla q,$ where $\v\in\mathbb{L}^p(\Omega)$ with $\mathrm{div \ }\v=0$ in the sense of distributions in $\Omega$ with $\v\cdot\n=0$ on $\partial\Omega$ and $q\in\mathrm{W}^{1,p}(\Omega)$ (Helmholtz-Weyl or Helmholtz-Hodge decomposition). For smooth vector fields in $\Omega$, such a decomposition is an orthogonal sum in $\mathbb{L}^2(\Omega)$. Note that $\u=\v+\nabla q$ holds for all $\u\in\mathbb{L}^p(\Omega)$, so that we can define the projection operator $\mathcal{P}_p$  by $\mathcal{P}_p\u = \v$. Let us consider the set $\G_{p}(\Omega):=\left\{\nabla q:q\in \mathrm{W}^{1,p}({\Omega})\right\}$ equipped with the norm $\|\nabla q\|_{\L^p}$. Then, from the above discussion, we obtain $\L^p(\Omega)=\wi\L^p(\Omega)\oplus\G_{p}(\Omega)$.   	For $p=2$, we obtain $\L^2(\Omega)=\H \oplus \G(\Omega)$, where $\G(\Omega)$ is the orthogonal complement of $\H$ in $\L^2(\Omega)$. We use the notation $\P_{\H}:=\mathcal{P}_2$ for  the orthogonal projection operator from $\L^2(\Omega)$ into $\H$. Since $\partial\Omega$ is of $\C^2$-boundary, note that $\P_{\H}$ maps $\H^1(\Omega)$ into itself and is bounded (Remark 1.6, \cite{Te}). 
	
	\subsubsection{Important inequalities} In the sequel, $C$ denotes a generic constant which may take different values at different places. 
	The following Gagliardo-Nirenberg's and Agmon's inequalities are used repeatedly in the paper: 
	\begin{align}
		\|\u\|_{\L^p}&\leq C\|\nabla\u\|_{\L^2}^{d\left(\frac{1}{2}-\frac{1}{p}\right)}\|\u\|_{\L^2}^{1-d\left(\frac{1}{2}-\frac{1}{p}\right)}, \ \text{ for all }\ \u\in\H_0^1(\Omega),\\
		\|\nabla\u\|_{\L^p}&\leq C\|\u\|_{\H^2}^{\frac{1}{2}+\frac{d}{2}\left(\frac{1}{2}-\frac{1}{p}\right)}\|\u\|_{\L^2}^{\frac{1}{2}-\frac{d}{2}\left(\frac{1}{2}-\frac{1}{p}\right)}, \ \text{ for all }\ \u\in\H^2(\Omega),
	\end{align}
	where  $2\leq p<\infty$ for $d=2$ and $2\leq p\leq 6$ for $d=3$, and 
	\begin{align}
		\|\u\|_{\L^{\infty}}\leq \left\{\begin{array}{cc}C\|\u\|_{\L^2}^{1/2}\|\u\|_{\H^2}^{1/2},&\text{ for }d=2,\\
			C\|\u\|_{\H^1}^{1/2}\|\u\|_{\H^2}^{1/2},&\text{ for }d=3,\end{array}\right.
	\end{align}
	for all $\u\in\H^2(\Omega)$. The well-known Poincar\'e inequality ($\|\u\|_{\H}\leq\frac{1}{\sqrt{\lambda_1}}\|\nabla\u\|_{\H}$, for all $\u\in\V$, where $\lambda_1$ is the smallest eigenvalue of the Stokes operator) as well as Ladyzhenkaya inequality ($\|\u\|_{\L^4}\leq 2^{1/4}\|\u\|_{\L^2}\|\nabla\u\|_{\L^2}$, for all $\u\in\H_0^1(\Omega)$) will also be used.

	\subsection{Proof of Theorem \ref{thm1}}	We are now ready to prove Theorem \ref{thm1}.
	\begin{proof}[Proof of Theorem \ref{thm1}]
		We assume that the nonlinear operator equation \eqref{1j} has a solution in $\mathcal{D}$, say $\f$.
		Upon substituting $\f$ into \eqref{1a}, we make use of the system \eqref{1a}-\eqref{1d} to find a pair of functions $\{\u,\nabla p\}$ as the solution of the direct problem corresponding to the external forcing function $\boldsymbol{F}(x,t)=\f(x)g(x,t)$.
		
		In order to prove that $\u$ and $\nabla p$ satisfy the overdetermination  condition \eqref{1e}, we consider $$\u(x,T)=\boldsymbol{\varphi}_1(x) \ \ \ \ \text{and} \ \ \ \ \nabla p=\nabla\psi_1(x),  \ \ \ \ x \ \in \ \Omega,$$
		and
		\begin{align}\label{3a}
			\u_t(x,T)-\mu \Delta \boldsymbol{\varphi}_1+(\boldsymbol{\varphi}_1\cdot\nabla)\boldsymbol{\varphi}_1+\alpha \boldsymbol{\varphi}_1+\beta|\boldsymbol{\varphi}_1|^{r-1}\boldsymbol{\varphi}_1+\nabla \psi_1=\boldsymbol{f}g(x,T). 
		\end{align}
		On the other hand, \eqref{1j} implies that
		\begin{align}\label{3b}
			\mathcal{A}\f-\mu \Delta \boldsymbol{\varphi}+(\boldsymbol{\varphi}\cdot\nabla)\boldsymbol{\varphi}+\alpha \boldsymbol{\varphi}+\beta|\boldsymbol{\varphi}|^{r-1}\boldsymbol{\varphi}+\nabla \psi=\boldsymbol{f}g(x,T). 
		\end{align}
		Using \eqref{1h} in \eqref{3b}, and then from \eqref{3a} and \eqref{3b}, it is not difficult to verify that the functions $(\boldsymbol{\varphi}-\boldsymbol{\varphi}_1)$ and $\nabla(\psi-\psi_1)$ satisfy the system of equations
		\begin{equation}\label{3c}
			\left\{
			\begin{aligned}
				-\mu \Delta (\boldsymbol{\varphi}-\boldsymbol{\varphi}_1)+\alpha(\boldsymbol{\varphi}-\boldsymbol{\varphi}_1)+((\boldsymbol{\varphi}-\boldsymbol{\varphi}_1)\cdot\nabla)\boldsymbol{\varphi}+((\boldsymbol{\varphi}_1&\cdot\nabla)(\boldsymbol{\varphi}-\boldsymbol{\varphi}_1)\\+\beta(|\boldsymbol{\varphi}|^{r-1}\boldsymbol{\varphi}-|\boldsymbol{\varphi}_1|^{r-1}\boldsymbol{\varphi}_1)+\nabla (\psi-\psi_1)&=\boldsymbol{0}, \ \ \text{in} \ \Omega, \\ \nabla \cdot  (\boldsymbol{\varphi}-\boldsymbol{\varphi}_1)&=0, \ \  \text{in} \ \Omega, \\
				\boldsymbol{\varphi}-\boldsymbol{\varphi}_1&=\boldsymbol{0}, \ \  \text{on} \ \partial\Omega.
			\end{aligned}
			\right.
		\end{equation}
		Taking the inner product with $(\boldsymbol{\varphi}-\boldsymbol{\varphi}_1)$ to the first equation in \eqref{3c}, we find
		\begin{align}\label{3d}
			\mu\|\nabla (\boldsymbol{\varphi}-\boldsymbol{\varphi}_1)\|_{\H}^2+\alpha \| \boldsymbol{\varphi}-\boldsymbol{\varphi}_1\|_{\H}^2&=-(((\boldsymbol{\varphi}-\boldsymbol{\varphi}_1)\cdot\nabla)\boldsymbol{\varphi},\boldsymbol{\varphi}-\boldsymbol{\varphi}_1)\nonumber\\&\quad-\beta(|\boldsymbol{\varphi}|^{r-1}\boldsymbol{\varphi}-|\boldsymbol{\varphi}_1|^{r-1}\boldsymbol{\varphi}_1,\boldsymbol{\varphi}-\boldsymbol{\varphi}_1).
		\end{align}
		For $r\geq 1$, we have (see Sec. 2.4 \cite{MTM4,MTM6})
		\begin{align}
			\beta\left(|\boldsymbol{\varphi}|^{r-1}\boldsymbol{\varphi}-|\boldsymbol{\varphi}_1|^{r-1}\boldsymbol{\varphi}_1,\boldsymbol{\varphi}-\boldsymbol{\varphi}_1\right)&\geq \frac{\beta}{2}\||\boldsymbol{\varphi}|^\frac{r-1}{2}(\boldsymbol{\varphi}-\boldsymbol{\varphi}_1)\|_{\H}^2+\frac{\beta}{2}\||\boldsymbol{\varphi}_1|^\frac{r-1}{2}(\boldsymbol{\varphi}-\boldsymbol{\varphi}_1)\|_{\H}^2  \label{3e} \\&\geq\frac{\beta}{2}\||\boldsymbol{\varphi}|^\frac{r-1}{2}(\boldsymbol{\varphi}-\boldsymbol{\varphi}_1)\|_{\H}^2 \geq0.\label{3e1}
		\end{align}
		\vskip 0.2 cm
		\noindent\textbf{Case I:} \emph{$d=2$ and $r\in[1,3]$.} 
		Using H\"older's, Ladyzhenskaya's and Poincar\'e's inequalities, we get
		\begin{align}\label{3f}
			\nonumber	&\big|\big(((\boldsymbol{\varphi}-\boldsymbol{\varphi}_1)\cdot\nabla)\boldsymbol{\varphi},\boldsymbol{\varphi}-\boldsymbol{\varphi}_1\big)\big| =\big|\big(((\boldsymbol{\varphi}-\boldsymbol{\varphi}_1)\cdot\nabla)(\boldsymbol{\varphi}-\boldsymbol{\varphi}_1),\boldsymbol{\varphi}\big)\big|\\&\quad \leq\|\nabla (\boldsymbol{\varphi}-\boldsymbol{\varphi}_1)\|_{\H}\| \boldsymbol{\varphi}-\boldsymbol{\varphi}_1\|_{\widetilde{\L}^4}\| \boldsymbol{\varphi}\|_{\widetilde{\L}^4} \leq \left(\frac{2}{\lambda_1}\right)^\frac{1}{4}\| \boldsymbol{\varphi}\|_{\widetilde{\L}^4}\|\nabla (\boldsymbol{\varphi}-\boldsymbol{\varphi}_1)\|_{\H}^2,
		\end{align}
		where $\lambda_1$ is the first eigenvalue of the Stokes operator (cf. \cite{MTM4,MTM6}). 	Substituting \eqref{3e1} and \eqref{3f} in \eqref{3d}, we obtain
		\begin{align*}
			\left(\mu-\left(\frac{2}{\lambda_1}\right)^\frac{1}{4}\| \boldsymbol{\varphi}\|_{\widetilde{\L}^4}\right)\|\nabla (\boldsymbol{\varphi}-\boldsymbol{\varphi}_1)\|_{\H}^2\leq0.
		\end{align*}
		If $\| \boldsymbol{\varphi}\|_{\widetilde{\L}^4}\left(\frac{2}{\lambda_1}\right)^\frac{1}{4}< \mu$, then $\|\nabla (\boldsymbol{\varphi}-\boldsymbol{\varphi}_1)\|_{\H}=0$, so that  $\boldsymbol{\varphi}=\boldsymbol{\varphi}_1$ and $\psi=\psi_1$.
		\vskip 0.2 cm
		\noindent\textbf{Case II:} 	\emph{$d=2,3$ and $r>3$.} 
		Applying the Cauchy-Schwarz and Young's inequalities, we find 
		\begin{align}\label{3h}
			\big|\big(((\boldsymbol{\varphi}-\boldsymbol{\varphi}_1)\cdot\nabla)(\boldsymbol{\varphi}-\boldsymbol{\varphi}_1),\boldsymbol{\varphi}\big)\big| &\leq\|\nabla (\boldsymbol{\varphi}-\boldsymbol{\varphi}_1)\|_{\H}\|\boldsymbol{\varphi} (\boldsymbol{\varphi}-\boldsymbol{\varphi}_1)\|_{\H}\nonumber\\&\leq \frac{\mu}{2}\|\nabla (\boldsymbol{\varphi}-\boldsymbol{\varphi}_1)\|_{\H}^2+\frac{1}{2\mu}\|\boldsymbol{\varphi} (\boldsymbol{\varphi}-\boldsymbol{\varphi}_1)\|_{\H}^2.
		\end{align}
		We take the term $\|\boldsymbol{\varphi} (\boldsymbol{\varphi}-\boldsymbol{\varphi}_1)\|_{\H}^2$ from \eqref{3h} and use the H\"older's and Young's inequalities to estimate it as (see \cite{KWH})
		\begin{align}\label{3i}
			&\int_\Omega|\boldsymbol{\varphi}(x)|^2|\boldsymbol{\varphi}(x)-\boldsymbol{\varphi}_1(x)|^2\d x\nonumber\\&=\int_{\Omega}|\boldsymbol{\varphi}(x)|^2|\boldsymbol{\varphi}(x)-\boldsymbol{\varphi}_1(x)|^{\frac{4}{r-1}}|\boldsymbol{\varphi}(x)-\boldsymbol{\varphi}_1(x)|^{\frac{2(r-3)}{r-1}}\d x\nonumber\\&\leq\left(\int_{\Omega}|\boldsymbol{\varphi}(x)|^{r-1}|\boldsymbol{\varphi}(x)-\boldsymbol{\varphi}_1(x)|^2\d x\right)^{\frac{2}{r-1}}\left(\int_{\Omega}|\boldsymbol{\varphi}(x)-\boldsymbol{\varphi}_1(x)|^2\d x\right)^{\frac{r-3}{r-1}}\nonumber\\&\leq{\beta\mu }\left(\int_{\Omega}|\boldsymbol{\varphi}(x)|^{r-1}|\boldsymbol{\varphi}(x)-\boldsymbol{\varphi}_1(x)|^2\d x\right)+\frac{r-3}{r-1}\left(\frac{2}{\beta\mu (r-1)}\right)^{\frac{2}{r-3}}\left(\int_{\Omega}|\boldsymbol{\varphi}(x)-\boldsymbol{\varphi}_1(x)|^2\d x\right),
		\end{align}
		for $r>3$. Using \eqref{3i} in \eqref{3h}, we find
		\begin{align}\label{3j}
			\nonumber	&\big|\big(((\boldsymbol{\varphi}-\boldsymbol{\varphi}_1)\cdot\nabla)(\boldsymbol{\varphi}-\boldsymbol{\varphi}_1),\boldsymbol{\varphi}\big)\big|\\&
			\leq \frac{\mu}{2}\|\nabla (\boldsymbol{\varphi}-\boldsymbol{\varphi}_1)\|_{\H}^2+\frac{\beta}{2}\||\boldsymbol{\varphi}|^\frac{r-1}{2}(\boldsymbol{\varphi}-\boldsymbol{\varphi}_1)\|_{\H}^2+\frac{r-3}{2 \mu(r-1)}\left(\frac{2}{\beta\mu (r-1)}\right)^{\frac{2}{r-3}}\|\boldsymbol{\varphi}-\boldsymbol{\varphi}_1\|_{\H}^2.
		\end{align}
		Combining \eqref{3e1} and \eqref{3j}, and then substituting it in \eqref{3d}, we deduce that
		\begin{align*}
			&\frac{\mu }{2}\|\nabla(\boldsymbol{\varphi}-\boldsymbol{\varphi}_1)\|_{\H}^2\leq\frac{r-3}{2\mu(r-1)}\left(\frac{2}{\beta\mu (r-1)}\right)^{\frac{2}{r-3}}\|\boldsymbol{\varphi}-\boldsymbol{\varphi}_1\|_{\H}^2.
		\end{align*}
		By Poincar\'e's inequality, we have
		\begin{align*}
			\left(\mu-\frac{r-3}{\lambda_1\mu(r-1)}\left(\frac{2}{\beta\mu (r-1)}\right)^{\frac{2}{r-3}}\right)\|\nabla(\boldsymbol{\varphi}-\boldsymbol{\varphi}_1)\|_{\H}^2\leq 0. 
		\end{align*}
		If $\frac{r-3}{\lambda_1\mu(r-1)}\left(\frac{2}{\beta\mu (r-1)}\right)^{\frac{2}{r-3}}<\mu$, then $\|\nabla(\boldsymbol{\varphi}-\boldsymbol{\varphi}_1)\|_{\H}=0$, so that 
		$\boldsymbol{\varphi}=\boldsymbol{\varphi}_1$ and $\psi=\psi_1$.
		\vskip 0.3cm
		\noindent
		\vskip 0.2 cm
		\noindent\textbf{Case III:} \emph{$d=r=3$.} From \eqref{3e1}, we get
		\begin{align}\label{3k}
			\beta(|\boldsymbol{\varphi}|\boldsymbol{\varphi}-|\boldsymbol{\varphi}_1|\boldsymbol{\varphi}_1,\boldsymbol{\varphi}-\boldsymbol{\varphi}_1)\geq \frac{\beta}{2}\||\boldsymbol{\varphi}|(\boldsymbol{\varphi}-\boldsymbol{\varphi}_1)\|_{\H}^2,
		\end{align}
		and \begin{align}\label{3l}
			\nonumber\big|\big(((\boldsymbol{\varphi}-\boldsymbol{\varphi}_1)\cdot\nabla)(\boldsymbol{\varphi}-\boldsymbol{\varphi}_1),\boldsymbol{\varphi}\big)\big| &\leq\|\nabla (\boldsymbol{\varphi}-\boldsymbol{\varphi}_1)\|_{\H}\|\boldsymbol{\varphi} (\boldsymbol{\varphi}-\boldsymbol{\varphi}_1)\|_{\H}\\&\leq\frac{1}{2\beta}\|\nabla (\boldsymbol{\varphi}-\boldsymbol{\varphi}_1)\|_{\H}^2+\frac{\beta}{2}\|\boldsymbol{\varphi} (\boldsymbol{\varphi}-\boldsymbol{\varphi}_1)\|_{\H}^2.
		\end{align}
		Combining \eqref{3k} and \eqref{3l}, and then substituting it in \eqref{3d}, we obtain
		\begin{align}
			\left(\mu-\frac{1}{2\beta}\right)\|\nabla (\boldsymbol{\varphi}-\boldsymbol{\varphi}_1)\|_{\H}^2 \leq 0.
		\end{align}
		For $\frac{1}{2\beta}<\mu $, we get  $\|\nabla(\boldsymbol{\varphi}-\boldsymbol{\varphi}_1)\|_{\H}=0$, and hence 
		$\boldsymbol{\varphi}=\boldsymbol{\varphi}_1$ and $\psi=\psi_1$.\
		\vskip 0.3cm
		Conversely, let us assume that the inverse problem \eqref{1a}-\eqref{1e} has a solution, say $\{\u,\nabla p,\f\}$. By using the elliptic regularity of the solution (Cattabriga regularity theorem, see \cite{LCa,Te1}, etc or  Corollary 2, \cite{KT2}), when the system \eqref{1a} is considered at $t=T$, we find
		\begin{align}
			\nonumber &\u_t(x,T)-\mu \Delta\u(x,T)+(\u(x,T)\cdot\nabla)\u(x,T)+\alpha \u(x,T)\\&\quad+\beta|\u(x,T)|^{r-1}\u(x,T)+\nabla p(x,T)=\boldsymbol{f}(x)g(x,T).
		\end{align}
		By the final overdetermination data \eqref{1e} and the definition of operator $\mathcal{A}$, we have
		\begin{align*}
			\nonumber\mathcal{A}\f&-\mu \Delta \boldsymbol{\varphi}+(\boldsymbol{\varphi}\cdot\nabla)\boldsymbol{\varphi}+\alpha \boldsymbol{\varphi}+ \beta|\boldsymbol{\varphi}|^{r-1}\boldsymbol{\varphi}+\nabla \psi=\boldsymbol{f}g(x,T),
		\end{align*}
		so that 
		\begin{align*}
			\f&=\frac{1}{g(x,T)}\left(\mathcal{A}\f-\mu \Delta \boldsymbol{\varphi}+(\boldsymbol{\varphi}\cdot\nabla)\boldsymbol{\varphi}+\alpha \boldsymbol{\varphi}+\beta|\boldsymbol{\varphi}|^{r-1}\boldsymbol{\varphi}+\nabla \psi\right)=\mathcal{B}\f.
		\end{align*}
		Hence, the nonlinear operator equation \eqref{1j} has a solution, which completes the proof. 
	\end{proof}
	\begin{remark}\label{rem2.1}
		One can perform the calculations in \eqref{3h} and \eqref{3i} in the following way also:
		\begin{align}\label{2.26}
			&\big|\big(((\boldsymbol{\varphi}-\boldsymbol{\varphi}_1)\cdot\nabla)(\boldsymbol{\varphi}-\boldsymbol{\varphi}_1),\boldsymbol{\varphi}\big)\big| \nonumber\\&\leq \theta\mu \|\nabla(\boldsymbol{\varphi}-\boldsymbol{\varphi}_1)\|_{\H}^2+\frac{1}{4\theta\mu }\int_{\Omega}|\boldsymbol{\varphi}(x)|^2|\boldsymbol{\varphi}(x)-\boldsymbol{\varphi}_1(x)|^2\d x\nonumber\\&= \theta\mu \|\nabla(\boldsymbol{\varphi}-\boldsymbol{\varphi}_1)\|_{\H}^2+\frac{1}{4\theta\mu }\int_{\Omega}|\boldsymbol{\varphi}(x)-\boldsymbol{\varphi}_1(x)|^2\left(|\boldsymbol{\varphi}(x)|^{r-1}+1\right)\frac{|\boldsymbol{\varphi}(x)|^2}{|\boldsymbol{\varphi}(x)|^{r-1}+1}\d x\nonumber\\&\leq \theta\mu \|\nabla(\boldsymbol{\varphi}-\boldsymbol{\varphi}_1)\|_{\H}^2+\frac{1}{4\theta\mu }\int_{\Omega}|\boldsymbol{\varphi}(x)|^{r-1}|\boldsymbol{\varphi}(x)-\boldsymbol{\varphi}_1(x)|^2\d x+\frac{1}{4\theta\mu }\int_{\Omega}|\boldsymbol{\varphi}(x)-\boldsymbol{\varphi}_1(x)|^2\d x,
		\end{align}
		for $0<\theta<1$,	where we have used the fact that $\left\|\frac{|\boldsymbol{\varphi}|^2}{|\boldsymbol{\varphi}|^{r-1}+1}\right\|_{\widetilde{\L}^{\infty}}<1$, for $r\geq 3$. Combining \eqref{3d}, \eqref{3e1} and \eqref{2.26}, we obtain 
		\begin{align}
			\left[\mu(1-\theta)-\frac{1}{4\lambda_1\theta\mu }\right]\|\nabla(\boldsymbol{\varphi}-\boldsymbol{\varphi}_1)\|_{\H}^2+\left(\frac{\beta}{2}-\frac{1}{4\theta\mu }\right)\||\boldsymbol{\varphi}|^\frac{r-1}{2}(\boldsymbol{\varphi}-\boldsymbol{\varphi}_1)\|_{\H}^2 \leq 0 .
		\end{align}
		Thus for $\mu>\frac{1}{2}\max\left\{\frac{1}{\beta},\frac{1}{\sqrt{\lambda_1}}\right\}$, one can obtain that $\boldsymbol{\varphi}=\boldsymbol{\varphi}_1$ and $\psi=\psi_1$. 
	\end{remark}
	\subsection{Energy estimates}\label{sec2.3} First we will concentrate on the direct problem \eqref{1a}-\eqref{1d}.
	To obtain energy estimates of the solutions of CBF equations \eqref{1a}-\eqref{1d}, we assume that $\u_0 \in \H^2(\Omega) \cap \V $, $g, g_t \in \C(\overline\Omega\times [0,T])$ satisfy the assumption \eqref{1f} and $\f \in \L^2(\Omega)$.\
	
	The next lemma provides the usual energy estimate for the CBF equations.
	\begin{lemma}\label{lemma1}
		Let $\u(\cdot)$ be the strong solution of the CBF  equations \eqref{1a}-\eqref{1d}. Then, for $r \in [1,\infty)$, we have 
		\begin{align}\label{E1}
			\sup_{t\in[0,T]}\|\u(t)\|_{\H}^2+2\mu \int_0^T\|\u(t)\|_{\V}^2\d t+2\beta\int_0^T\|\u(t)\|_{\widetilde{\L}^{r+1}}^{r+1}\d t\leq \|\u_0\|_{\H}^2+\frac{T}{\alpha}\|\f\|_{\L^2}^2\|g\|_0^2.
		\end{align}
	\end{lemma}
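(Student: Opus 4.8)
The statement \eqref{E1} is the classical energy estimate for the CBF system, and the plan is to prove it by testing \eqref{1a} against the solution $\u$ itself. Since $\u$ is a strong solution, $\u(t)\in\H^2(\Omega)\cap\V\subset\widetilde{\L}^{r+1}$ for a.e.\ $t\in[0,T]$, $\u_t\in\L^2(0,T;\H)$, and $\f g(\cdot,t)\in\L^2(\Omega)$ because $\f\in\L^2(\Omega)$ and $g\in\C(\overline\Omega\times[0,T])$, so every manipulation below is meaningful; if full rigour is desired the computation can first be carried out on finite-dimensional Galerkin approximations and then passed to the limit. Taking the $\L^2(\Omega)$-inner product of \eqref{1a} with $\u(t)$, the pressure term disappears because $(\nabla p,\u)=-(p,\nabla\cdot\u)+\int_{\partial\Omega}p\,(\u\cdot\n)\,\d S=0$ by \eqref{1b} and \eqref{1d}, and the convective term vanishes, $((\u\cdot\nabla)\u,\u)=\tfrac{1}{2}\int_\Omega\u\cdot\nabla|\u|^2\,\d x=-\tfrac{1}{2}\int_\Omega|\u|^2(\nabla\cdot\u)\,\d x=0$.

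For the remaining terms I would use $(\u_t,\u)=\tfrac{1}{2}\frac{\d}{\d t}\|\u\|_{\H}^2$, the integration by parts $-\mu(\Delta\u,\u)=\mu\|\nabla\u\|_{\H}^2=\mu\|\u\|_{\V}^2$ (which uses \eqref{1d}), $\alpha(\u,\u)=\alpha\|\u\|_{\H}^2$, and $\beta(|\u|^{r-1}\u,\u)=\beta\int_\Omega|\u(x)|^{r+1}\,\d x=\beta\|\u\|_{\widetilde{\L}^{r+1}}^{r+1}$. For the source, the Cauchy--Schwarz inequality together with $\|\f g(\cdot,t)\|_{\L^2}\le\|g\|_0\|\f\|_{\L^2}$ and Young's inequality give $(\f g,\u)\le\|g\|_0\|\f\|_{\L^2}\|\u\|_{\H}\le\tfrac{\alpha}{2}\|\u\|_{\H}^2+\tfrac{1}{2\alpha}\|\f\|_{\L^2}^2\|g\|_0^2$. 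Collecting these identities, absorbing $\tfrac{\alpha}{2}\|\u\|_{\H}^2$ on the left, and multiplying by $2$, I arrive at the differential inequality
\begin{align*}
\frac{\d}{\d t}\|\u(t)\|_{\H}^2+2\mu\|\u(t)\|_{\V}^2+\alpha\|\u(t)\|_{\H}^2+2\beta\|\u(t)\|_{\widetilde{\L}^{r+1}}^{r+1}\le\frac{1}{\alpha}\|\f\|_{\L^2}^2\|g\|_0^2 .
\end{align*}

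Discarding the nonnegative term $\alpha\|\u(t)\|_{\H}^2$ and integrating from $0$ to $t$ yields, for every $t\in[0,T]$,
\begin{align*}
\|\u(t)\|_{\H}^2+2\mu\int_0^t\|\u(s)\|_{\V}^2\,\d s+2\beta\int_0^t\|\u(s)\|_{\widetilde{\L}^{r+1}}^{r+1}\,\d s\le\|\u_0\|_{\H}^2+\frac{t}{\alpha}\|\f\|_{\L^2}^2\|g\|_0^2\le\|\u_0\|_{\H}^2+\frac{T}{\alpha}\|\f\|_{\L^2}^2\|g\|_0^2 ,
\end{align*}
and taking the supremum over $t\in[0,T]$ gives \eqref{E1}. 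I do not foresee any real obstacle: the only points needing (routine) justification are the chain rule $\frac{\d}{\d t}\|\u\|_{\H}^2=2(\u_t,\u)$ and the integration by parts in the viscous term, both standard for strong solutions of the CBF equations, together with the vanishing of the convective and pressure terms recorded above; alternatively, performing the computation on Galerkin approximations and using lower semicontinuity of the norms under weak limits makes the proof completely rigorous.
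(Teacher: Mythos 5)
Your proof is correct and follows essentially the same route as the paper: test \eqref{1a} with $\u$, use the vanishing of the convective and pressure terms, estimate the source by Cauchy--Schwarz and Young's inequality with the weight $\alpha/2$, absorb, and integrate in time. The only cosmetic difference is that you discard the remaining $\alpha\|\u\|_{\H}^2$ term before integrating while the paper carries it along; the final estimate is identical.
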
 
	\begin{proof}
		Let us take inner product with $\u(\cdot)$ of the equation \eqref{1a} and use the fact that $((\u \cdot \nabla)\u,\u)=0$ to obtain 
		\begin{align}\label{E2}
			\frac{1}{2}\frac{\d}{\d t}\|\u(t)\|_{\H}^2+\mu \|\u(t)\|_{\V}^2+\alpha \|\u(t)\|_{\H}^2+\beta\|\u(t)\|_{\widetilde{\L}^{r+1}}^{r+1}=(\f g(t),\u(t)),
		\end{align}
		for a.e. $t\in[0,T]$, where we have performed the integration by parts. Using H\"older's and Young's inequalities, we estimate $|(\f g,\u)|$ as
		\begin{align}\label{E3}
			|(\f g,\u)| \leq\|\f\|_{\L^2}\|g\|_{\mathrm{L}^{\infty}}\|\u\|_{\H} \leq\frac{1}{2\alpha}\|\f\|_{\L^2}^2\|g\|_{\mathrm{L}^{\infty}}^{2}+\frac{\alpha}{2}\|\u\|_{\H}^{2}.
		\end{align}
		Substituting \eqref{E3} in \eqref{E2}, and then integrating from $0$ to $t$, we find 
		\begin{align*}
			&\|\u(t)\|_{\H}^2+2\mu \int_0^t\|\u(s)\|_{\V}^2\d s+\alpha\int_0^t\|\u(s)\|_{\H}^2\d s +2\beta\int_0^t\|\u(s)\|_{\widetilde{\L}^{r+1}}^{r+1}\d s\\&\quad\leq \|\u_0\|_{\H}^2+\frac{T}{\alpha}\|\f\|_{\L^2}^2\|g\|_0^{2},
		\end{align*}
		for all $t \in [0,T]$ and \eqref{E1} follows. 
	\end{proof}
	Let us now differentiate \eqref{1a}-\eqref{1d} with respect to time $t$ and define $\v(\cdot):=\u_t(\cdot)$. Then $\v(\cdot)$ satisfies: 
	\begin{equation}\label{E5}
		\left\{
		\begin{aligned}
			\v_t-\mu \Delta\v+(\v\cdot \nabla)\u+(\u \cdot\nabla)\v+\alpha \v+\beta r|\u|^{r-1}\v+\nabla p_t=\f g_t, \ \ \ \text{in} \ \Omega \times (0,T), \\
			\nabla \cdot \v=0, \ \ \ \ \text{in} \ \Omega \times (0,T), \ \\
			\v=\boldsymbol{0}, \ \ \  \text{on} \ \partial\Omega \times [0,T),\\
			\v=\P_{\H} \left(\mu \Delta\u_0-(\u_0 \cdot \nabla)\u_0-\alpha \u_0-\beta|\u_0|^{r-1}\u_0+\f(x)g(x,0)\right), \ \ \  \text{in} \ \Omega \times \{0\}, 
		\end{aligned}
		\right.
	\end{equation}
	where $\P_{\H}$ is an orthogonal projection on $\H$. Moreover, for $d=2,3$, using the embedding  $\H^2(\Omega) \cap \V \subset\H^2(\Omega)\subset \L^p(\Omega),$ for all $p \in [1, \infty)$, and Agmon's inequality, we see that
	\begin{align}\label{E6}
		\nonumber	\|\v(0)\|_{\H} &\leq \mu\|\Delta\u_0\|_{\H}+\|\u_0\|_{\wi\L^{\infty}}\|\nabla \u_0\|_{\H}+\alpha \|\u_0\|_{\H}+\beta\|\u_0\|_{\widetilde{\L}^{2r}}^r+\|\f\|_{\L^2}\|g\|_0\nonumber\\&\leq C(\mu,\alpha,\beta,\|g\|_0,\|\f\|_{\L^2},\|\Delta\u_0\|_{\H}) < +\infty,
	\end{align}
	whenever $\u_0 \in \H^2(\Omega) \cap \V$. 
	Therefore, the $\L^2$-norm of the initial data for $\v$ is  controllable if $\u_0 \in \H^2(\Omega) \cap \V$.
	
	The next lemma gives an estimate of $\v(t)$ for all $t \geq 0$.
	\begin{lemma}\label{lemma2}
		Let $\u(\cdot)$ be the strong solution of the CBF  equations \eqref{1a}-\eqref{1d}. Then, for $r \geq 1, \ (d=2), \text{ and for} \ r \geq 3, \ (d=3)$, we have 
		\begin{align}\label{E7}
			\sup_{t\in[0,T]}\|\v(t)\|_{\H}^2+\mu \int_0^T\|\v(t)\|_{\V}^2\d t\leq C\left(\|\u_0\|_{\H^2(\Omega) \cap \V}+\|\f\|_{\L^2}^2\|g_t\|_0^2\right).
		\end{align}
	\end{lemma}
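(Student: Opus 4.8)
The plan is to derive a differential inequality for $\|\v(t)\|_{\H}^2$ by testing the time-differentiated system \eqref{E5} for $\v=\u_t$ with $\v$ itself and then closing it by Gr\"onwall's inequality; the only delicate point is the term produced by the interaction of $\v$ with $\nabla\u$. First I would take the $\H$-inner product of the first equation of \eqref{E5} with $\v$. The pressure term $(\nabla p_t,\v)$ drops because $\v$ is divergence free and vanishes on $\partial\Omega$, the term $((\u\cdot\nabla)\v,\v)$ drops after integration by parts (using $\nabla\cdot\u=0$), and the absorption contribution is favorable: $(\beta r|\u|^{r-1}\v,\v)=\beta r\||\u|^{\frac{r-1}{2}}\v\|_{\H}^2\ge0$. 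This gives, for a.e. $t\in[0,T]$,
\begin{align*}
\frac{1}{2}\frac{\d}{\d t}\|\v(t)\|_{\H}^2+\mu\|\v(t)\|_{\V}^2+\alpha\|\v(t)\|_{\H}^2+\beta r\||\u|^{\frac{r-1}{2}}\v\|_{\H}^2=(\f g_t,\v)-((\v\cdot\nabla)\u,\v).
\end{align*}
The source term is harmless, $(\f g_t,\v)\le\|\f\|_{\L^2}\|g_t\|_0\|\v\|_{\H}\le\frac{1}{2\alpha}\|\f\|_{\L^2}^2\|g_t\|_0^2+\frac{\alpha}{2}\|\v\|_{\H}^2$, with the last term absorbed on the left.

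The critical term is $((\v\cdot\nabla)\u,\v)=-((\v\cdot\nabla)\v,\u)$, bounded by $\|\nabla\v\|_{\H}\,\||\u|\,|\v|\|_{\H}$, so everything reduces to estimating $\int_\Omega|\u(x)|^2|\v(x)|^2\d x$, and here the distinction between $d=2$ and $d=3$ enters. For $d=2$ (and any $r\ge1$) I would use the Cauchy--Schwarz and Ladyzhenskaya inequalities, $\||\u|\,|\v|\|_{\H}\le\|\u\|_{\widetilde{\L}^4}\|\v\|_{\widetilde{\L}^4}\le 2^{1/4}\|\u\|_{\widetilde{\L}^4}\|\v\|_{\H}^{1/2}\|\nabla\v\|_{\H}^{1/2}$, so by Young's inequality $|((\v\cdot\nabla)\u,\v)|\le\frac{\mu}{2}\|\v\|_{\V}^2+C\mu^{-3}\|\u\|_{\widetilde{\L}^4}^4\|\v\|_{\H}^2$. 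Since $\|\u\|_{\widetilde{\L}^4}^4\le 2\|\u\|_{\H}^2\|\nabla\u\|_{\H}^2$ and Lemma \ref{lemma1} gives $\u\in\mathrm{L}^{\infty}(0,T;\H)\cap\mathrm{L}^2(0,T;\V)$, the function $h(t):=C\mu^{-3}\|\u(t)\|_{\widetilde{\L}^4}^4$ belongs to $\mathrm{L}^1(0,T)$.

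For $d=3$ and $r\ge3$ I would instead use a pointwise Young inequality in the spirit of Remark \ref{rem2.1}: $|\u|^2\le\varepsilon|\u|^{r-1}+C_{\varepsilon}$ for any $\varepsilon>0$ when $r>3$ (with $\varepsilon=1$, $C_{\varepsilon}=0$ when $r=3$), which gives $\int_\Omega|\u|^2|\v|^2\d x\le\varepsilon\||\u|^{\frac{r-1}{2}}\v\|_{\H}^2+C_{\varepsilon}\|\v\|_{\H}^2$, and hence $|((\v\cdot\nabla)\u,\v)|\le\frac{\mu}{2}\|\v\|_{\V}^2+\frac{\varepsilon}{2\mu}\||\u|^{\frac{r-1}{2}}\v\|_{\H}^2+\frac{C_{\varepsilon}}{2\mu}\|\v\|_{\H}^2$. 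For $r>3$ one picks $\varepsilon$ small enough that $\frac{\varepsilon}{2\mu}\le\beta r$, a free choice; for $r=3$ one needs $\frac{1}{2\mu}\le\beta r=3\beta$, which is guaranteed because the strong solution in this case exists only under $2\beta\mu\ge1$. In both subcases the $\||\u|^{\frac{r-1}{2}}\v\|_{\H}^2$ contribution is absorbed into the favorable absorption term, and $h(t)$ is a constant.

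Collecting the bounds and absorbing $\frac{\mu}{2}\|\v\|_{\V}^2$ into $\mu\|\v\|_{\V}^2$ leaves $\frac{\d}{\d t}\|\v(t)\|_{\H}^2+\mu\|\v(t)\|_{\V}^2\le h(t)\|\v(t)\|_{\H}^2+\frac{1}{\alpha}\|\f\|_{\L^2}^2\|g_t\|_0^2$ with $h\in\mathrm{L}^1(0,T)$. Integrating over $(0,t)$, discarding the nonnegative absorption integral, and applying Gr\"onwall's inequality yields $\sup_{t\in[0,T]}\|\v(t)\|_{\H}^2+\mu\int_0^T\|\v(t)\|_{\V}^2\d t\le e^{\|h\|_{\mathrm{L}^1(0,T)}}\big(\|\v(0)\|_{\H}^2+\frac{T}{\alpha}\|\f\|_{\L^2}^2\|g_t\|_0^2\big)$. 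Finally, $\|\v(0)\|_{\H}$ is controlled by \eqref{E6}: since $\u_0\in\H^2(\Omega)\cap\V$, the Agmon and Sobolev estimates recorded there bound it by a finite constant depending on $\mu,\alpha,\beta,\|g\|_0,\|\f\|_{\L^2}$ and $\|\Delta\u_0\|_{\H}$, and substituting gives \eqref{E7}. The main obstacle is exactly the quartic term $\int_\Omega|\u|^2|\v|^2\d x$ coming from $\nabla\u$: on a bounded domain one cannot integrate by parts against $-\Delta\u$ as on the torus, so one must play the absorption dissipation $\beta r\||\u|^{\frac{r-1}{2}}\v\|_{\H}^2$ against it, which is why the three-dimensional statement is restricted to $r\ge3$ (and, for $r=3$, inherits the constraint $2\beta\mu\ge1$), whereas $d=2$ needs only Ladyzhenskaya's inequality and the a priori bound of Lemma \ref{lemma1}.
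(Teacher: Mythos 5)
Your proposal is correct and follows essentially the same route as the paper: test the time-differentiated system with $\v$, absorb the source term, split into the cases $d=2$ (Ladyzhenskaya plus Gr\"onwall with the integrable weight coming from Lemma \ref{lemma1}) and $d=3$, $r\geq 3$ (play the convective term against the absorption dissipation $\beta r\||\u|^{\frac{r-1}{2}}\v\|_{\H}^2$, with $6\beta\mu\geq 1$ needed when $r=3$), and finally control $\|\v(0)\|_{\H}$ via \eqref{E6}. The only differences are cosmetic: in 2D you move the derivative onto $\v$ and obtain the Gr\"onwall weight $\|\u\|_{\widetilde{\L}^4}^4$ instead of the paper's $\|\u\|_{\V}^2$, and in 3D you phrase the key splitting as a pointwise Young inequality $|\u|^2\leq\varepsilon|\u|^{r-1}+C_\varepsilon$ rather than the H\"older-then-Young computation of \eqref{E14}; both variants are valid and yield the same estimate.
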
 
	\begin{proof}
		Taking the inner product with $\v(\cdot)$ to the first equation in \eqref{E5}, we find 
		\begin{align}\label{E8}
			\nonumber&\frac{1}{2}\frac{\d}{\d t}\|\v(t)\|_{\H}^2+\mu \|\v(t)\|_{\V}^2+\alpha \|\v(t)\|_{\H}^2+\beta r\||\u(t)|^\frac{r-1}{2}\v(t)\|_{\H}^2\\&\quad=(\f g_t(t),\v(t))-((\v(t) \cdot\nabla)\u(t),\v(t)).
		\end{align}
		Using H\"older's and Young's inequalities, we estimate  $|(\f g_t,\v)|$ as 
		\begin{align}\label{E9}
			|(\f g_t,\v)|\leq\|\f\|_{\L^2}\|g_t\|_{\mathrm{L}^{\infty}}\|\v\|_{\H}\leq\frac{1}{2\alpha}\|\f\|_{\L^2}^2\|g_t\|_{\mathrm{L}^{\infty}}^2+\frac{\alpha }{2}\|\v\|_{\H}^2.
		\end{align}
		\vskip 0.2 cm
		\noindent\textbf{Case I:} \emph{$d=2$ and $r \geq 1$.} Using H\"older's, Ladyzhenskaya's and Young's inequalities, we estimate $|((\v \cdot\nabla)\u,\v)|$ as
		\begin{align}\label{E10}
			|((\v \cdot\nabla)\u,\v)|\leq \|\v\|_{\widetilde{\L}^{4}}^2\|\nabla\u\|_{\H}\leq\sqrt{2}\|\v\|_{\H}\|\nabla\v\|_{\H}\|\nabla\u\|_{\H}\leq \frac{\mu}{2}\|\v\|_{\V}^2+\frac{1}{\mu}\|\v\|_{\H}^2\|\u\|_{\V}^2.
		\end{align}
		Substituting \eqref{E9} and \eqref{E10} in \eqref{E8}, and then integrating from $0$ to $t$, we find  
		\begin{align}\label{E11}
			&	\|\v(t)\|_{\H}^2+\mu \int_0^t\|\v(s)\|_{\V}^2\d s+\alpha \int_0^t\|\v(s)\|_{\H}^2\d s +2\beta r\int_0^t\||\u(s)|^\frac{r-1}{2}\v(s)\|_{\H}^2\d s\nonumber\\&\quad\leq\|\v(0)\|_{\H}^2+ \frac{T}{\alpha }\|\f\|_{\L^2}^2\|g_t\|_0^2 +\frac{2}{\mu}\int_0^t\|\u(s)\|_{\V}^2\|\v(s)\|_{\H}^2\d s.
		\end{align}
		An application of Gronwall's inequality in \eqref{E11} yields 
		\begin{align}\label{E12}
			\|\v(t)\|_{\H}^2\leq\left\{\|\v(0)\|_{\H}^2+ \frac{T}{\alpha }\|\f\|_{\L^2}^2\|g_t\|_0^2 \right\}\exp\left\{\frac{2}{\mu}\int_0^T\|\u(t)\|_{\V}^2\d t\right\},
		\end{align}
		for all $t\in[0,T]$. Thus, from \eqref{E11}, it is immediate that 
		\begin{align*}
			\nonumber&\sup_{t\in[0,T]}\|\v(t)\|_{\H}^2+\mu \int_0^T\|\v(t)\|_{\V}^2\d t+\alpha \int_0^T\|\v(t)\|_{\H}^2\d t+2\beta r\int_0^T\||\u(t)|^\frac{r-1}{2}\v(t)\|_{\H}^2\d t\\&\qquad  \leq C\left\{\|\u_0\|_{\H^2(\Omega) \cap \V}+ \frac{T}{\alpha}\|\f\|_{\L^2}^2\|g_t\|_0^2 \right\},
		\end{align*}
		and \eqref{E7} follows.
		\vskip 0.2 cm
		\noindent\textbf{Case II:} \emph{$d=3$ and $r >3$.}
		A calculation similar to \eqref{3j} yields
		\begin{align}\label{E14}
			|((\v \cdot \nabla)\u,\v)|\leq\frac{\mu }{2}\|\v\|_{\V}^2+\frac{\beta r}{2}\||\u|^{\frac{r-1}{2}}|\v|\|_{\H}^2+\frac{r-3}{2\mu(r-1)}\left(\frac{2}{\beta r \mu (r-1)}\right)^{\frac{2}{r-3}}\|\v\|_{\H}^2,
		\end{align}
		for $r>3$. Substituting \eqref{E9} and \eqref{E14} in \eqref{E8}, and then integrating from $0$ to $t$, we find
		\begin{align}\label{E15}
			&	\|\v(t)\|_{\H}^2+\mu \int_0^t\|\v(s)\|_{\V}^2\d s +\alpha \int_0^t\|\v(s)\|_{\H}^2\d s+\beta r\int_0^t\||\u(s)|^\frac{r-1}{2}\v(s)\|_{\H}^2\d s\nonumber\\&\quad\leq\|\v(0)\|_{\H}^2+\frac{T}{\alpha }\|\f\|_{\L^2}^2\|g_t\|_0^2+\frac{r-3}{\mu(r-1)}\left(\frac{2}{\beta r \mu (r-1)}\right)^{\frac{2}{r-3}}\int_0^t\|\v(s)\|_{\H}^2\d s,
		\end{align}
		for all $t \in [0,T]$. Applying Gronwall's inequality in \eqref{E15}, and then taking supremum on both sides over time from $0$ to $T$ in \eqref{E15}, one can easily get \eqref{E7}.

		\vskip 0.2 cm
		\noindent\textbf{Case III:} \emph{$d=r=3$.} The  term $\beta r\||\u(t)|^\frac{r-1}{2}\v(t)\|_{\H}^2$ in \eqref{E8} becomes $3\beta \||\u(t)|\v(t)\|_{\H}^2$ and
		\begin{align}\label{E17}
			|((\v \cdot \nabla)\u,\v)|\leq \|\v\|_{\V}\||\u||\v|\|_{\H}\leq\frac{\mu }{2}\|\v\|_{\V}^2+\frac{1}{2\mu}\||\u||\v|\|_{\H}^2.
		\end{align}
		Substituting \eqref{E9} and \eqref{E17} in \eqref{E8}, and then integrating from $0$ to $t$, we arrive at 
		\begin{align*}
			&	\|\v(t)\|_{\H}^2+\mu \int_0^t\|\v(s)\|_{\V}^2\d s+\alpha \int_0^t\|\v(s)\|_{\H}^2\d s+2\left(3\beta-\frac{1}{2\mu }\right)\int_0^t\||\u(s)||\v(s)|\|_{\H}^2\d s\nonumber\\&\qquad\leq  \|\v(0)\|_{\H}^2+\frac{T}{\alpha }\|\f\|_{\L^2}^2\|g_t\|_0^2,
		\end{align*}
		for all $t\in[0,T]$ and \eqref{E7} follows provided $6\beta\mu \geq 1$, which completes the proof.
	\end{proof}

	\begin{lemma}\label{lemma3}
		Let $\u(\cdot)$ be the strong solution of the CBF  equations \eqref{1a}-\eqref{1d}.  Then, for $r \geq 1, \ (d=2)\text{ and for} \ r \geq 3, \ (d=3)$, we have 
		\begin{align}\label{E18}
			&\mu	\sup_{t\in[0,T]}\|\u(t)\|_{\V}^2+ \frac{2\beta}{r+1}\sup_{t\in[0,T]}\|\u(t)\|_{\widetilde{\L}^{r+1}}^{r+1} +\int_0^T\|\u_t(t)\|_{\H}^2\d t \nonumber\\&\quad\leq C\left( \|\u_0\|_{\V}^2+\|\f\|_{\L^2}^2\|g\|_0^2\right).
		\end{align}
	\end{lemma}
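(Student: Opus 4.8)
The plan is to test the first equation of \eqref{1a} with the operator $-\P_{\H}\Delta\u=\A\u$ (equivalently, pair the projected equation with $\A\u$), which produces $\frac{\mu}{2}\frac{\d}{\d t}\|\u\|_{\V}^2$, a good $\mu\|\A\u\|_{\H}^2$ term, and the term $(\u_t,\A\u)$ that we want to convert into $\frac12\frac{\d}{\d t}\|\nabla\u\|_{\H}^2$-type control together with $\|\u_t\|_{\H}$. Since we are working in a bounded domain, I would instead pair \eqref{1a} directly with $\u_t=\v$: this gives immediately
\begin{align*}
	\|\u_t(t)\|_{\H}^2+\frac{\mu}{2}\frac{\d}{\d t}\|\u(t)\|_{\V}^2+\frac{\alpha}{2}\frac{\d}{\d t}\|\u(t)\|_{\H}^2+\frac{\beta}{r+1}\frac{\d}{\d t}\|\u(t)\|_{\widetilde{\L}^{r+1}}^{r+1}=(\f g(t),\u_t(t))-((\u(t)\cdot\nabla)\u(t),\u_t(t)),
\end{align*}
using $\frac{\d}{\d t}\||\u|^{\frac{r+1}{2}}\|_{\H}^2=(r+1)(|\u|^{r-1}\u,\u_t)$ and that the pressure term drops after projection. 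The right-hand side is controlled by Cauchy--Schwarz and Young: $|(\f g,\u_t)|\le\frac14\|\u_t\|_{\H}^2+\|\f\|_{\L^2}^2\|g\|_0^2$, which is harmless.

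The genuine obstacle is the convective term $|((\u\cdot\nabla)\u,\u_t)|$, because $\u_t$ carries an $\|\u_t\|_{\H}$ factor we must not overwhelm and we only want $\|\u\|_{\V}$, $\|\u\|_{\widetilde\L^{r+1}}$ on the right (no $\H^2$-norm of $\u$, since that is the content of a later lemma). For $d=2$ I would bound $|((\u\cdot\nabla)\u,\u_t)|\le\|\u\|_{\widetilde\L^4}\|\nabla\u\|_{\widetilde\L^4}\|\u_t\|_{\H}$, but this reintroduces $\|\u\|_{\H^2}$; so instead use $\|(\u\cdot\nabla)\u\|_{\H}\le\|\u\|_{\widetilde\L^\infty}\|\nabla\u\|_{\H}$ and Agmon --- still $\H^2$. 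The clean route is to use the already-established Lemma \ref{lemma2}: write $((\u\cdot\nabla)\u,\u_t)=((\u\cdot\nabla)\u,\v)$ and estimate $\|(\u\cdot\nabla)\u\|_{\L^{4/3}}$-style against $\|\v\|_{\widetilde\L^4}$, or more simply bound $|((\u\cdot\nabla)\u,\v)|=|((\v\cdot\nabla)\u,\u)|$ --- wait, that is $b(\v,\u,\u)$ which is not zero. Rather, I would use $b(\u,\u,\v)=-b(\u,\v,\u)$, so $|((\u\cdot\nabla)\u,\v)|=|b(\u,\v,\u)|\le\|\u\|_{\widetilde\L^4}\|\nabla\v\|_{\H}\|\u\|_{\widetilde\L^4}$; this trades the $\H^2$-norm of $\u$ for $\|\nabla\v\|_{\H}=\|\v\|_{\V}$, which is exactly what Lemma \ref{lemma2} makes integrable in time. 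Then $|b(\u,\v,\u)|\le\|\u\|_{\widetilde\L^4}^2\|\v\|_{\V}$, and after integrating in $t$ and applying Cauchy--Schwarz in time, this is bounded by $\big(\int_0^T\|\u\|_{\widetilde\L^4}^4\d t\big)^{1/2}\big(\int_0^T\|\v\|_{\V}^2\d t\big)^{1/2}$, both factors already controlled (the first by Ladyzhenskaya and Lemma \ref{lemma1}, the second by Lemma \ref{lemma2}).

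For $d=3$, $r\ge 3$, the same splitting $|((\u\cdot\nabla)\u,\v)|=|b(\u,\v,\u)|\le\|\u\|_{\widetilde\L^4}\|\v\|_{\V}\|\u\|_{\widetilde\L^4}$ works provided $\|\u\|_{\widetilde\L^4}^4\in\L^1(0,T)$; by interpolation $\|\u\|_{\widetilde\L^4}\le\|\u\|_{\widetilde\L^2}^{1-\vartheta}\|\u\|_{\widetilde\L^{r+1}}^{\vartheta}$ with an appropriate $\vartheta$ (possible for $r\ge 3$), and then Lemmas \ref{lemma1}--\ref{lemma2} close the estimate. After collecting terms, integrating \eqref{E8}-type identity from $0$ to $t$, discarding the nonnegative absorption terms on the left, and taking the supremum over $t\in[0,T]$, we obtain
\begin{align*}
	\mu\sup_{t\in[0,T]}\|\u(t)\|_{\V}^2+\frac{2\beta}{r+1}\sup_{t\in[0,T]}\|\u(t)\|_{\widetilde\L^{r+1}}^{r+1}+\int_0^T\|\u_t(t)\|_{\H}^2\d t\le C\big(\|\u_0\|_{\V}^2+\|\f\|_{\L^2}^2\|g\|_0^2\big),
\end{align*}
where $C$ absorbs the constants from Lemmas \ref{lemma1} and \ref{lemma2} (hence depends on the data, $\mu,\alpha,\beta,r,T$). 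The main subtlety to be careful about is ensuring no $\|\u\|_{\H^2}$ sneaks in --- this is why routing the convective term through $\|\v\|_{\V}$ (controlled by the previously proved Lemma \ref{lemma2}) rather than through Agmon's inequality on $\u$ is essential; also one must check the initial term $\mu\|\u_0\|_{\V}^2+\frac{2\beta}{r+1}\|\u_0\|_{\widetilde\L^{r+1}}^{r+1}$ is finite, which holds since $\u_0\in\H^2(\Omega)\cap\V\hookrightarrow\widetilde\L^{r+1}$ for the admissible range of $r$.
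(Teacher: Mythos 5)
Your proposal is correct and follows essentially the same route as the paper: test \eqref{1a} with $\u_t$, turn $\beta(|\u|^{r-1}\u,\u_t)$ into $\frac{\beta}{r+1}\frac{\d}{\d t}\|\u\|_{\widetilde{\L}^{r+1}}^{r+1}$, and bound the convective term by $\|\u\|_{\widetilde{\L}^4}^2\|\u_t\|_{\V}$ so that the resulting time integrals $\int_0^T\|\u\|_{\widetilde{\L}^4}^4\,\d t$ and $\int_0^T\|\u_t\|_{\V}^2\,\d t$ are closed by Lemmas \ref{lemma1} and \ref{lemma2}, exactly as in \eqref{E22}--\eqref{E23}. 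The only (harmless) deviations are that you use Cauchy--Schwarz in time where the paper applies Young's inequality pointwise, and you spell out the $\widetilde{\L}^2$--$\widetilde{\L}^{r+1}$ interpolation needed for $d=3$, $r\geq 3$, which the paper leaves implicit.
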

	\begin{proof} 
		Taking the inner product with $\u_t(\cdot)$ in \eqref{1a}, we obtain 
		\begin{align}\label{E19}
			\nonumber&\|\u_t(t)\|_{\H}^2+\frac{\mu }{2}\frac{\d}{\d t}\|\nabla\u(t)\|_{\H}^2+\frac{\alpha }{2}\frac{\d}{\d t}\|\u(t)\|_{\H}^2+\beta(|\u(t)|^{r-1}\u(t),\u_t(t))\\&\quad=(\f g(t),\u_t(t))-((\u(t) \cdot\nabla)\u(t),\u_t(t)).
		\end{align}
		It can be easily seen that
		\begin{align}\label{E20}
			(|\u(t)|^{r-1}\u(t),\u_t(t))=\frac{1}{r+1}\frac{\d}{\d t}\|\u(t)\|_{\widetilde{\L}^{r+1}}^{r+1}, 
		\end{align}
		for $r\geq 1$. Using H\"older's and Young's inequalities, we estimate the terms $|(\f g,\u_t)|$ and $|((\u \cdot\nabla)\u,\u_t)|$ as
		\begin{align}
			|(\f g,\u_t)|&\leq\|g\|_{\mathrm{L}^{\infty}}\|\f\|_{\L^2}\|\u_t\|_{\H}\leq\frac{1}{2}\|\u_t\|_{\H}^2+\frac{1}{2}\|g\|_{\mathrm{L}^{\infty}}^2\|\f\|_{\L^2}^2,\label{E21} \\
			|((\u \cdot\nabla)\u,\u_t)|&\leq\|\u\|_{\widetilde{\L}^4}^2\|\u_t\|_{\V}\leq\frac{1}{2}\|\u_t\|_{\V}^2+\frac{1}{2}\|\u\|_{\widetilde{\L}^4}^4 \label{E22}.
		\end{align}
		Substituting the estimates \eqref{E20}-\eqref{E22} in \eqref{E19}, and then integrating from $0$ to $t$, we find   
		\begin{align}\label{E23}
			&\frac{2\beta}{r+1}\|\u(t)\|_{\widetilde{\L}^{r+1}}^{r+1}+\mu\|\u(t)\|_{\V}^2+\alpha\|\u(t)\|_{\H}^2+\int_0^t\|\u_t(s)\|_{\H}^2\d s\nonumber\\&\quad\leq \frac{2\beta}{r+1}\|\u_0\|_{\widetilde{\L}^{r+1}}^{r+1}+\mu \|\u_0\|_{\V}^2+\alpha\|\u_0\|_{\H}^2+T\|g\|_0^2\|\f\|_{\L^2}^2+\int_0^t\|\u_t(s)\|_{\V}^2\d s\nonumber\\&\qquad+\int_0^t\|\u(s)\|_{\widetilde{\L}^4}^4 \d s,
		\end{align}
		for all $t\in[0,T]$. Note that the last two terms on the right hand side of \eqref{E23} is bounded (see \eqref{E1} and \eqref{E5}). Using Sobolev embedding theorem on the first term on the right hand side of \eqref{E23}, and then taking supremum on both sides over time from $0$ to $T$, we get 
		\begin{align*}
			\frac{2\beta}{r+1}\sup_{t\in[0,T]}\|\u(t)\|_{\widetilde{\L}^{r+1}}^{r+1}+\mu \sup_{t\in[0,T]}\|\u(t)\|_{\V}^2+\int_0^T\|\u_t(t)\|_{\H}^2\d t\leq C\left( \|\u_0\|_{\V}^2+\|g\|_0^2\|\f\|_{\L^2}^2 \right),
		\end{align*}
		and it completes the proof.
	\end{proof}
	The next lemma provides the regularity of the solution for the case $r \geq 3$  and $d=2,3$ ($2\beta\mu\geq 1$ for $d=r=3$). 
	\begin{lemma}[Theorem 4.3, \cite{MTM4}]\label{lem2.5}
		Let $\boldsymbol{F}(x,t):=\f(x)g(x,t) \in \mathrm{W}^{1,1}([0,T];\H)$ and $\u_0 \in \V$ be such that $\Delta\u_0 \in \H$. Then, for $r\geq3 \ (2\beta \mu \geq1 \ \text{for} \  r=3)$, there exists one and only one function $\u :[0,T] \to \V \cap \widetilde{\L}^{r+1}$ that satisfies: $$ \u \in \mathrm{W}^{1,\infty}([0,T];\H), \ \ \Delta\u \in \mathrm{L}^\infty(0,T;\H).$$ Moreover, $\u \in \mathrm{L}^{\infty}(0,T;\H^2(\Omega) \cap \V)$.
	\end{lemma}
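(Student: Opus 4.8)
The plan is to prove Lemma~\ref{lem2.5} by a Faedo--Galerkin construction and to make it run on exactly the three layers of a priori bounds already isolated in Lemmas~\ref{lemma1}--\ref{lemma3}. Note first that the hypotheses here are the ones those lemmas need: $\boldsymbol{F}=\f g\in\mathrm{W}^{1,1}([0,T];\H)\hookrightarrow\mathrm{C}([0,T];\H)$ gives $\boldsymbol{F}\in\mathrm{L}^\infty(0,T;\H)$ and $\boldsymbol{F}_t\in\mathrm{L}^1(0,T;\H)$, while $\u_0\in\V$ with $\Delta\u_0\in\H$ forces $\u_0\in\H^2(\Omega)\cap\V$ by $\H^2$-regularity of the Dirichlet problem, so that $\|\v(0)\|_{\H}<\infty$ as in \eqref{E6}. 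Taking the Stokes eigenfunctions $\{\w_k\}_{k\geq1}$ ($\mathrm{A}\w_k=\lambda_k\w_k$, $\mathrm{A}:=-\P_{\H}\Delta$) as a basis and writing $\u^n=\sum_{k=1}^nc_k^n(t)\w_k$, the truncated system is a locally solvable ODE; testing it with $\u^n$ makes it global and uniformly bounded in $\mathrm{L}^\infty(0,T;\H)\cap\mathrm{L}^2(0,T;\V)\cap\mathrm{L}^{r+1}(0,T;\widetilde{\L}^{r+1})$ (the argument of Lemma~\ref{lemma1}); testing with $\u^n_t$ gives $\u^n$ uniformly bounded in $\mathrm{L}^\infty(0,T;\V\cap\widetilde{\L}^{r+1})$ with $\u^n_t\in\mathrm{L}^2(0,T;\H)$ (Lemma~\ref{lemma3}); and differentiating the truncated system in $t$ and testing with $\v^n:=\u^n_t$ gives $\v^n$ uniformly bounded in $\mathrm{L}^\infty(0,T;\H)\cap\mathrm{L}^2(0,T;\V)$ (Lemma~\ref{lemma2}, where $\boldsymbol{F}_t\in\mathrm{L}^1(0,T;\H)$ and the finiteness of $\|\v^n(0)\|_{\H}$ are used).

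The remaining, and decisive, estimate is the $\H^2$-bound. I would obtain it by reading the truncated equation at (a.e.) fixed time as a stationary Stokes system,
\[
\mu\,\mathrm{A}\u^n=\P_{\H}\boldsymbol{F}-\v^n-\P_{\H}[(\u^n\cdot\nabla)\u^n]-\alpha\u^n-\beta\,\P_{\H}(|\u^n|^{r-1}\u^n),
\]
taking the $\H$-norm, and using the norm equivalence $\|\u^n\|_{\H^2(\Omega)}\leq C\|\mathrm{A}\u^n\|_{\H}$ on the Stokes domain. The first, second and fourth terms on the right are already bounded in $\mathrm{L}^\infty(0,T;\H)$. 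For the convective term $\|(\u^n\cdot\nabla)\u^n\|_{\H}\leq\|\u^n\|_{\widetilde{\L}^\infty}\|\nabla\u^n\|_{\H}$, and Agmon's inequality ($\|\u^n\|_{\widetilde{\L}^\infty}\leq C\|\u^n\|_{\H}^{1/2}\|\u^n\|_{\H^2}^{1/2}$ for $d=2$, $\leq C\|\u^n\|_{\H^1}^{1/2}\|\u^n\|_{\H^2}^{1/2}$ for $d=3$) produces a factor $\|\u^n\|_{\H^2}^{1/2}$ which Young's inequality absorbs into the left-hand side, leaving a power of $\|\u^n\|_{\V}$ that is already controlled; this absorption is legitimate and $n$-independent because $\|\u^n\|_{\H^2}$ is finite at the Galerkin level. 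For the damping term $\beta\|\,|\u^n|^{r-1}\u^n\|_{\H}=\beta\|\u^n\|_{\widetilde{\L}^{2r}}^{r}$, the embedding $\V\hookrightarrow\widetilde{\L}^{2r}$ settles $d=2$ (any $r\geq1$) and $d=3$, $r=3$; in the latter case one needs $2\beta\mu\geq1$ precisely so that, after the absorption, a coercive multiple of $\|\mathrm{A}\u^n\|_{\H}^2$ (equivalently of the nonnegative term $\||\u^n|^{(r-1)/2}\nabla\u^n\|_{\H}^{2}$ from identity \eqref{3}) remains, whereas for $d=3$, $r>3$ one interpolates $\widetilde{\L}^{2r}$ between $\widetilde{\L}^{r+1}$ (bounded above) and $\widetilde{\L}^\infty$ (controlled via Agmon and $\H^2$) and trades the super-critical growth against that extra integrability and the coercivity \eqref{3} supplies, again closing by Young's inequality. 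The outcome is a bound for $\u^n$ in $\mathrm{L}^\infty(0,T;\H^2(\Omega)\cap\V)$, uniform in $n$, hence for $\Delta\u^n$ in $\mathrm{L}^\infty(0,T;\H)$.

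The passage to the limit is then standard: weak-$\ast$ compactness in the spaces above, together with an Aubin--Lions--Simon compactness argument, gives strong convergence of $\u^n$ in $\mathrm{L}^2(0,T;\V)$ and a.e., which suffices to identify the limits of $(\u^n\cdot\nabla)\u^n$ and $|\u^n|^{r-1}\u^n$; the limit $\u$ inherits $\mathrm{W}^{1,\infty}([0,T];\H)$, $\Delta\u\in\mathrm{L}^\infty(0,T;\H)$ and $\u\in\mathrm{L}^\infty(0,T;\H^2(\Omega)\cap\V)$ (hence $\u\in\mathrm{C}([0,T];\V)$, using $\u_t\in\mathrm{L}^\infty(0,T;\H)$), and de Rham's theorem recovers a pressure $p\in\mathrm{L}^\infty(0,T;\mathrm{H}^1(\Omega)\cap\mathrm{L}^2_0(\Omega))$ from $\nabla p=\boldsymbol{F}-\u_t+\mu\Delta\u-(\u\cdot\nabla)\u-\alpha\u-\beta|\u|^{r-1}\u\in\mathrm{L}^\infty(0,T;\L^2(\Omega))$. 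Uniqueness I would obtain by the energy method on the difference $\w=\u_1-\u_2$: testing its equation with $\w$, bounding $((\w\cdot\nabla)\u_2,\w)$ using $\u_2\in\mathrm{L}^\infty(0,T;\V\cap\widetilde{\L}^\infty)$, and invoking the monotonicity inequality \eqref{3e}--\eqref{3e1} for $|\cdot|^{r-1}(\cdot)$, one reaches a differential inequality closed by Gronwall's inequality; for $d=r=3$ the convective term is dominated using $1\leq2\beta\mu$, exactly as in Case~III of the proof of Theorem~\ref{thm1}.

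The step I expect to be the main obstacle is the $\H^2$-estimate of the second paragraph, owing to the bounded-domain geometry: $\P_{\H}$ and $-\Delta$ need not commute, and $(-\Delta\u)\cdot\n$ need not vanish on $\partial\Omega$, so one cannot test with $-\Delta\u$ and invoke identity \eqref{3} cleanly, and the pressure term does not drop out of that test either. Working with $\mathrm{A}\u$ instead forces control of $\|\,|\u|^{r-1}\u\|_{\H}$ rather than of the more convenient $(|\u|^{r-1}\u,-\Delta\u)$, and it is exactly here that the restriction $r\geq3$---and $2\beta\mu\geq1$ when $r=3$---is genuinely needed: the Forchheimer damping must compensate for the structure that the bounded domain strips from the linear part.
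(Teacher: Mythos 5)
The paper does not prove this lemma: it is imported verbatim from Theorem 4.3 of \cite{MTM4} (with Corollary 2 of \cite{KT2} as an alternative source), and the remark immediately following it --- pointing to the $m$-accretive quantization of the operators and Theorems 1.6 and 1.8 in Chapter 4 of \cite{VB} --- signals that the cited proof runs through nonlinear semigroup theory rather than a Galerkin scheme. That is exactly why the hypotheses have the form $\boldsymbol{F}\in\mathrm{W}^{1,1}([0,T];\H)$ and $\u_0\in\V$ with $\Delta\u_0\in\H$: for a quasi-autonomous equation $\u_t+\mathcal{M}\u\ni\boldsymbol{F}$ governed by a (quasi-)$m$-accretive operator, Barbu's theorem delivers a unique strong solution with $\u\in\mathrm{W}^{1,\infty}([0,T];\H)$ and $\mathcal{M}\u\in\mathrm{L}^{\infty}(0,T;\H)$ directly from $\boldsymbol{F}\in\mathrm{W}^{1,1}$ and $\u_0\in D(\mathcal{M})$, and the $\H^2$ regularity is then read off from the characterization of $D(\mathcal{M})$. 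Your Galerkin route is a legitimate alternative in outline, and your first and third layers of estimates do mirror Lemmas \ref{lemma1}--\ref{lemma3}, but it must reprove by hand precisely the regularity the semigroup theorem encodes, and that is where your sketch has a real gap.

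The gap is the decisive $\H^2$ estimate for $d=3$ and large $r$. Writing $\beta\||\u^n|^{r-1}\u^n\|_{\H}=\beta\|\u^n\|_{\widetilde{\L}^{2r}}^{r}$ and interpolating $\widetilde{\L}^{2r}$ between $\widetilde{\L}^{r+1}$ and $\widetilde{\L}^{\infty}$ gives $\|\u^n\|_{\widetilde{\L}^{r+1}}^{(r+1)/2}\|\u^n\|_{\widetilde{\L}^{\infty}}^{(r-1)/2}$, and Agmon's inequality converts the last factor into $C\|\u^n\|_{\V}^{(r-1)/4}\|\u^n\|_{\H^2}^{(r-1)/4}$. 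Young's inequality can absorb $\|\u^n\|_{\H^2}^{(r-1)/4}$ into the linearly coercive term $\mu\|\mathrm{A}\u^n\|_{\H}$ only when $(r-1)/4<1$, i.e.\ $r<5$; for $r\geq5$ the exponent is supercritical and the absorption fails. You cannot rescue it with identity \eqref{3} either: that identity arises from testing with $-\Delta\u$, and --- as the introduction of this very paper stresses and as you yourself note in your last paragraph --- in a bounded domain $\P_{\H}$ and $-\Delta$ do not commute, the pressure does not drop out of that test, and no sign can be attributed to $(|\u|^{r-1}\u,\mathrm{A}\u)$; so the ``coercivity \eqref{3} supplies'' is not available once you work with $\mathrm{A}\u^n$. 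A naive $\L^p$ elliptic bootstrap does not close either: from $|\u|^{r-1}\u\in\L^{(r+1)/r}$, Cattabriga regularity gives $\u\in\mathrm{W}^{2,(r+1)/r}\hookrightarrow\L^{3(r+1)/(r-2)}$, which for $r\geq5$ is no better than $\L^{r+1}$, so the iteration does not gain integrability. What the cited references actually supply at this point is a dedicated regularity result for the semilinear Stokes system $-\mu\Delta\u+\beta|\u|^{r-1}\u+\nabla p=\boldsymbol{h}$, $\boldsymbol{h}\in\H$ (equivalently, the $m$-accretivity in $\H$ of $\mu\mathrm{A}+\beta\P_{\H}(|\cdot|^{r-1}\cdot)$ with domain $\H^2(\Omega)\cap\V\cap\widetilde{\L}^{2r}$), which exploits the monotonicity and sign structure of the damping term near the boundary and is the genuinely hard step of the theorem. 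Without importing that lemma, your argument proves the claim only for $d=2$, and for $d=3$ only in the range $3\leq r<5$.
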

	One can check Corollary 2, \cite{KT2} for a result similar to that of Lemma \ref{lem2.5}. 	An analogous result of Lemma \ref{lem2.5} for the case of $d=2, \ r\in[1,3]$ can be obtained by using the $m$-accretive quantization of the linear and nonlinear operators (cf. Theorems 1.6 and 1.8 in Chapter 4, \cite{VB} for the abstract theory and Sec. 5, \cite{VBSS} for 2D NSE).
	
	\begin{lemma}\label{lemma2.6}
		Let $\u(\cdot)$ be the unique strong solution of the CBF  equations \eqref{1a}-\eqref{1d}.  Then, for $r \geq 1, \ (d=2)\text{ and for} \ r \geq 3, \ (d=3)$, we have  $\v\in\C([\epsilon,T];\V)\cap\mathrm{L}^2(0, T;\H^2(\Omega))$ and 
		\begin{align}\label{2.37}
			\sup_{t\in[\epsilon,T]}\|\v(t)\|_{\V}^2 +\int_\epsilon^T\|\v(t)\|_{\H^2}^2\d t < \infty,
		\end{align}
		for any $0<\epsilon\leq T$.	
	\end{lemma}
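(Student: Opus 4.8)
The plan is to bootstrap regularity for $\v=\u_t$ from the bounds already established. Recall from Lemma \ref{lemma2} that $\v\in\mathrm{L}^{\infty}(0,T;\H)\cap\mathrm{L}^2(0,T;\V)$, and from Lemma \ref{lem2.5} that $\u\in\mathrm{L}^{\infty}(0,T;\H^2(\Omega)\cap\V)$, hence in particular $\u\in\mathrm{L}^{\infty}(0,T;\widetilde\L^{\infty})$ by Agmon's inequality. The starting point is the equation \eqref{E5} satisfied by $\v$. The idea is to take the inner product of the first equation in \eqref{E5} with $-\Delta\v$ (equivalently with the Stokes operator applied to $\v$) and estimate each term so as to close a differential inequality of Gronwall type for $\|\v(t)\|_{\V}^2$ that also controls $\int\|\v\|_{\H^2}^2\d t$. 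The reason we can only get continuity on $[\epsilon,T]$ rather than $[0,T]$ is that $\v(0)$ need not lie in $\V$ — only in $\H$, by \eqref{E6} — so I would first use the $\mathrm{L}^2(0,T;\V)$ bound to pick a time $t_0\in(0,\epsilon]$ with $\v(t_0)\in\V$, and then run the estimate on $[t_0,T]$.

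\textbf{Key steps, in order.} First, take the $\H$-inner product of the $\v$-equation with $-\Delta\v$, using that $(\v_t,-\Delta\v)=\frac12\frac{\d}{\d t}\|\nabla\v\|_{\H}^2$ and $(-\mu\Delta\v,-\Delta\v)=\mu\|\Delta\v\|_{\H}^2$, while the pressure term $(\nabla p_t,-\Delta\v)$ must be handled as in the remarks preceding the lemma (taking divergence of \eqref{E5}, one expresses $\nabla p_t$ through $\v$, $\u$ and their derivatives, or one simply notes the term contributes a controllable quantity after using the elliptic estimate $\|\v\|_{\H^2}\le C(\|\Delta\v\|_{\H}+\|\v\|_{\H})$; alternatively one works directly with the Stokes operator $\mathrm{A}=-\P_\H\Delta$ so that the pressure term disappears). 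Second, estimate the four nonlinear/lower-order terms: $((\v\cdot\nabla)\u,-\Delta\v)$ and $((\u\cdot\nabla)\v,-\Delta\v)$ are bounded using H\"older, the embedding $\H^2(\Omega)\cap\V\hookrightarrow\widetilde\L^{\infty}$, the Gagliardo--Nirenberg inequalities stated in the excerpt, and Young's inequality, absorbing a small multiple of $\|\Delta\v\|_{\H}^2$ on the left; $\alpha(\v,-\Delta\v)=\alpha\|\nabla\v\|_{\H}^2$ is a good sign; and the Forchheimer-type term $\beta r(|\u|^{r-1}\v,-\Delta\v)$ is estimated by $\beta r\||\u|^{r-1}\|_{\widetilde\L^{\infty}}\|\v\|_{\H}\|\Delta\v\|_{\H}\le \beta r\|\u\|_{\widetilde\L^{\infty}}^{r-1}\|\v\|_{\H}\|\Delta\v\|_{\H}$, again absorbed via Young, using $\u\in\mathrm{L}^{\infty}(0,T;\widetilde\L^{\infty})$ from Lemma \ref{lem2.5}. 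Third, the forcing term $(\f g_t,-\Delta\v)\le\|\f\|_{\L^2}\|g_t\|_0\|\Delta\v\|_{\H}$ is absorbed similarly, using $g_t\in\C(\overline\Omega\times[0,T])$. Fourth, collect terms to obtain
\begin{align*}
\frac{\d}{\d t}\|\nabla\v(t)\|_{\H}^2+\mu\|\Delta\v(t)\|_{\H}^2\le C(t)\|\nabla\v(t)\|_{\H}^2+C\bigl(\|\u(t)\|_{\widetilde\L^{\infty}},\|\f\|_{\L^2},\|g_t\|_0,\|\v(t)\|_{\H}\bigr),
\end{align*}
where $C(t)$ is integrable on $[0,T]$ thanks to the bounds of Lemmas \ref{lemma2}, \ref{lemma3} and \ref{lem2.5}. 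Fifth, apply Gronwall's inequality on $[t_0,T]$ starting from $\v(t_0)\in\V$, then integrate the $\mu\|\Delta\v\|_{\H}^2$ term, and finally use the elliptic regularity estimate $\|\v\|_{\H^2}\le C\|\Delta\v\|_{\H}+C\|\v\|_{\H}$ (valid since $\partial\Omega$ is $\C^2$) together with $\v\in\mathrm{L}^\infty(0,T;\H)$ to conclude \eqref{2.37}. The continuity $\v\in\C([\epsilon,T];\V)$ then follows from the standard fact that $\v\in\mathrm{L}^2(\epsilon,T;\H^2(\Omega))$ with $\v_t\in\mathrm{L}^2(\epsilon,T;\H)$ (the latter read off from \eqref{E5} once the $\H^2$-bound is known) implies $\v\in\C([\epsilon,T];\V)$ by the Lions--Magenes interpolation lemma.

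\textbf{Main obstacle.} The delicate point is the Forchheimer term $\beta r(|\u|^{r-1}\v,-\Delta\v)$ together with the convective term $((\v\cdot\nabla)\u,-\Delta\v)$ in three dimensions: in bounded domains one cannot use the identity \eqref{3} and must instead rely on $\|\u\|_{\widetilde\L^{\infty}}$ being finite, which is exactly what Lemma \ref{lem2.5} provides for $r\ge 3$, $d=3$ (and $r\ge1$, $d=2$). One must be careful that the constant multiplying $\|\Delta\v\|_{\H}^2$ coming from all these terms can be made strictly less than $\mu$ by Young's inequality with suitably small parameters — there is no smallness-of-data or largeness-of-$\mu$ requirement here beyond what Lemma \ref{lem2.5} already assumes, since every dangerous term is linear in $\v$ (the $\v$-equation being linear in $\v$), so the coefficients are merely time-dependent and integrable rather than requiring absorption of genuinely nonlinear-in-$\v$ quantities. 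The bookkeeping of which norm of $\u$ each term needs, and verifying each is finite from the previous lemmas, is the only real work.
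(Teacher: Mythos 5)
Your proposal is correct and follows essentially the same route as the paper: testing the time-differentiated equation with $-\Delta\v$, recovering $\nabla p_t$ by taking the divergence of \eqref{E5}, estimating the convective and Forchheimer terms through $\|\u\|_{\widetilde\L^{\infty}}$ via Agmon and Gagliardo--Nirenberg, and closing with Gronwall plus elliptic regularity. The only cosmetic difference is how the lack of $\V$-regularity of $\v(0)$ is handled: you select a single good starting time $t_0\le\epsilon$ with $\v(t_0)\in\V$ from the $\mathrm{L}^2(0,T;\V)$ bound, whereas the paper integrates the Gronwall estimate over the starting time $\epsilon\in(0,T)$ and uses $\int_0^T\|\nabla\v(\epsilon)\|_{\H}^2\,\d\epsilon<\infty$ — two equivalent implementations of the same idea.
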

	\begin{proof}
		We need to prove the estimate \eqref{2.37} only.	Taking the inner product with $-\Delta\v(\cdot)$ to the first equation in \eqref{E5}, we find 
		\begin{align}\label{E301}
			\nonumber&\frac{1}{2}\frac{\d}{\d t}\|\nabla\v(t)\|_{\H}^2+ \mu\|\Delta\v(t)\|_{\H}^2+\alpha \|\v(t)\|_{\V}^2\\&\quad=(\f g_t(t),-\Delta\v(t))-((\v(t) \cdot\nabla)\u(t),-\Delta\v(t))-((\u(t) \cdot \nabla)\v(t),-\Delta\v(t))\nonumber\\&\qquad-\beta r(|\u(t)|^{r-1}\v(t),-\Delta\v(t))-(\nabla p_t,-\Delta \v(t)),
		\end{align}
		for a.e. $t\in[\epsilon, T]$, for some $0<\epsilon\leq T$.	Using H\"older's and Young's inequalities, we estimate  $|(\f g_t,-\Delta\v)|$ as 
		\begin{align*}
			|(\f g_t,-\Delta\v)|\leq\|\f\|_{\L^2}\|g_t\|_{\mathrm{L}^{\infty}}\|\Delta\v\|_{\H}\leq\frac{7}{\mu}\|\f\|_{\L^2}^2\|g_t\|_{\mathrm{L}^{\infty}}^2+\frac{\mu }{14}\|\Delta\v\|_{\H}^2.
		\end{align*}
		\vskip 0.2 cm
		\noindent\textbf{Case I:} \emph{$d=2$ and $r \geq 1$.} Using H\"older's, Gagliardo-Nirenberg's and Young's inequalities, we estimate  $|((\v \cdot\nabla)\u,-\Delta\v)|$ as 
		\begin{align}\label{2.39}
			|((\v \cdot\nabla)\u,-\Delta\v)| &\leq  \|\v\|_{\widetilde{\L}^4} \|\nabla\u\|_{\widetilde{\L}^4}\|\Delta\v\|_{\H} \nonumber\\&\leq 
			C\|\v\|_{\H}^\frac{1}{2}	\|\v\|_{\V}^\frac{1}{2}\|\u\|_{\H}^\frac{1}{4}	\|\u\|_{\H^2(\Omega)\cap \V}^\frac{3}{4}\|\Delta\v\|_{\H}\nonumber\\&\leq C\|\v\|_{\H}	\|\v\|_{\V}\|\u\|_{\H}^\frac{1}{2}	\|\u\|_{\H^2(\Omega)\cap \V}^\frac{3}{2}+\frac{\mu}{14}\|\Delta\v\|_{\H}^2
			\nonumber\\&\leq C\|\v\|_{\H}^2	\|\u\|_{\H}	\|\u\|_{\H^2(\Omega)\cap \V}^3+\frac{\alpha}{4}\|\v\|_{\V}^2+\frac{\mu}{14}\|\Delta\v\|_{\H}^2.
		\end{align}
		We estimate the terms $|((\u \cdot\nabla)\v,-\Delta\v)|$ and $\beta r|(|\u|^{r-1}\v,-\Delta\v)|$ using H\"older's, Agmon's and Young's inequalities as
		\begin{align}
			|((\u \cdot\nabla)\v,-\Delta\v)| &\leq  \|\u\|_{\widetilde{\L}^\infty} \|\v\|_{\V}\|\Delta\v\|_{\H} \nonumber\\&\leq
			C\|\u\|_{\H}\|\u\|_{\H^2(\Omega)\cap \V}\|\v\|_{\V}^2+\frac{\mu}{14}\|\Delta\v\|_{\H}^2,\label{2.40}\\
			\beta r|(|\u|^{r-1}\v,-\Delta\v)| &\leq  \beta r\|\u\|_{\widetilde{\L}^\infty}^{r-1}\|\v\|_{\H}\|\Delta\v\|_{\H} 	\nonumber\\&\leq C \|\u\|_{\H}^{r-1}\|\u\|_{\H^2(\Omega)\cap \V}^{r-1}\|\v\|_{\H}^2+\frac{\mu}{14}\|\Delta\v\|_{\H}^2.\label{2.41}
		\end{align}
		Taking the divergence on both sides of \eqref{E5} and then using the divergence free condition on $\boldsymbol{F}$, we get
		\begin{align}\label{E302}
			-\Delta p_t&=\nabla \cdot\left[ ((\u \cdot \nabla)\v)+ ((\v \cdot \nabla)\u)+\beta (|\u|^{r-1}\v)\right]\nonumber\\
			p_t&=(-\Delta)^{-1}\left\{\nabla \cdot \left[((\u \cdot \nabla)\v)+ ((\v \cdot \nabla)\u)+\beta (|\u|^{r-1}\v)\right]\right\},
		\end{align}
		in the weak sense.	Taking gradient on both sides in \eqref{E302}, we estimate the term $|(\nabla p_t,-\Delta \v)|$ as
		\begin{align}\label{2.43}
			|(\nabla p_t,-\Delta \v)| &\leq \|\nabla p_t\|_{\H}\|\Delta \v\|_{\H} \nonumber\\&\leq \|(\u \cdot \nabla)\v\|_{\H}\|\Delta \v\|_{\H}+\|(\v \cdot \nabla)\u\|_{\H}\|\Delta \v\|_{\H} +\beta \||\u|^{r-1}\v\|_{\H}\|\Delta \v\|_{\H} \nonumber\\&\leq \frac{3\mu}{14}\|\Delta \u\|_{\H}^2+ C\|\v\|_{\H}^2	\|\u\|_{\H}	\|\u\|_{\H^2(\Omega)\cap \V}^3+\frac{\alpha}{4}\|\v\|_{\V}^2+C\|\u\|_{\H}\|\u\|_{\H^2(\Omega)\cap \V}\|\v\|_{\V}^2\nonumber\\&\quad+C \|\u\|_{\H}^{r-1}\|\u\|_{\H^2(\Omega)\cap \V}^{r-1}\|\v\|_{\H}^2,
		\end{align}
		where we have used the estimates \eqref{2.39}-\eqref{2.41}. Substituting these estimates \eqref{2.39}-\eqref{2.43} in \eqref{E301}, and then integrating from $\epsilon$ to $t$, we find 
		\begin{align}\label{E303}
			&\|\nabla\v(t)\|_{\H}^2+\mu \int_\epsilon^t\|\Delta \u(s)\|_{\H}^2 \d s+\alpha  \int_\epsilon^t \|\v(s)\|_{\V}^2\d s \nonumber\\&\quad\leq 	\|\nabla\v(\cdot,\epsilon)\|_{\H}^2+\frac{14T}{\mu}\|\f\|_{\L^2}^2\|g_t\|_0^2+C\int_\epsilon^t\|\v(s)\|_{\H}^2	\|\u(s)\|_{\H}	\|\u(s)\|_{\H^2(\Omega)\cap \V}^3 \d s\nonumber\\&\qquad+C\int_\epsilon^t\|\u(s)\|_{\H}\|\u(s)\|_{\H^2(\Omega)\cap \V}\|\v(s)\|_{\V}^2 \d s+C\int_\epsilon^t \|\u(s)\|_{\H}^{r-1}\|\u(s)\|_{\H^2(\Omega)\cap \V}^{r-1}\|\v(s)\|_{\H}^2 \d s,
		\end{align}
		for all $t \in [\epsilon,T]$. An application of Gronwall's inequality in \eqref{E303} gives
		\begin{align*}
			\|\nabla\v(t)\|_{\H}^2&\leq \exp \bigg(CT\sup_{t\in[\epsilon,T]}\|\u(t)\|_{\H}\sup_{t\in[\epsilon,T]}\|\u(t)\|_{\H^2(\Omega)\cap \V}\bigg)\bigg\{	\|\nabla\v(\cdot,\epsilon)\|_{\H}^2\nonumber\\&\quad+\frac{14T}{\mu}\|\f\|_{\L^2}^2\|g_t\|_0^2+CT\sup_{t\in[\epsilon,T]}\|\v(t)\|_{\H}^2\sup_{t\in[\epsilon,T]}	\|\u(t)\|_{\H} \sup_{t\in[\epsilon,T]}\|\u(t)\|_{\H^2(\Omega)\cap \V}^3 \nonumber\\&\quad+CT\sup_{t\in[\epsilon,T]} \|\u(t)\|_{\H}^{r-1}\sup_{t\in[\epsilon,T]}\|\u(t)\|_{\H^2(\Omega)\cap \V}^{r-1}\sup_{t\in[\epsilon,T]}\|\v(t)\|_{\H}^2\bigg\},
		\end{align*}
		for all $t \in [\epsilon,T]$. Integrate the above estimate over $\epsilon$ from $0$ to $T$ and then using the energy estimates given in Lemmas \ref{lemma1}-\ref{lem2.5}, we deduce 
		\begin{align}\label{E304}
			\|\nabla\v(t)\|_{\H}^2&\leq \frac{C}{ T}\bigg\{\int_0^T\|\nabla\v(\cdot,\epsilon)\|_{\H}^2\d\epsilon+\|\f\|_{\L^2}^2\|g_t\|_0^2\bigg\}\nonumber\\&\leq C\bigg(\|\u_0\|_{\H^2(\Omega) \cap \V}+\|\f\|_{\L^2}^2\|g_t\|_0^2\bigg).
		\end{align}
		Thus, from \eqref{E303}, it is immediate that
		\begin{align}\label{E305}
			\sup_{t\in[\epsilon,T]}	\|\nabla\v(t)\|_{\H}^2+\int_\epsilon^T\|\Delta\v(t)\|_{\H}^2 \d t\leq  C\left(\|\u_0\|_{\H^2(\Omega) \cap \V}+\|\f\|_{\L^2}^2\|g_t\|_0^2\right),
		\end{align}
		and $\Delta \v \in \mathrm{L}^2(0,T;\H)$. 
		\vskip 0.2 cm
		\noindent\textbf{Case II:} \emph{$d=3$ and $r \geq 3$.}  Using H\"older's, Gagliardo-Nirenberg's and Young's inequalities, we estimate  $|((\v \cdot\nabla)\u,-\Delta\v)|$ as
		\begin{align}\label{2.46}
			|((\v \cdot\nabla)\u,-\Delta\v)| &\leq \|\v\|_{\widetilde{\L}^4} \|\nabla\u\|_{\widetilde{\L}^4}\|\Delta\v\|_{\H} \nonumber\\& \leq
			C\|\v\|_{\H}^\frac{1}{4}	\|\v\|_{\V}^\frac{3}{4}\|\u\|_{\H}^\frac{1}{8}	\|\u\|_{\H^2(\Omega)\cap \V}^\frac{7}{8}\|\Delta\v\|_{\H}\nonumber\\&\leq C\|\v\|_{\H}^\frac{1}{2}	\|\v\|_{\V}^\frac{3}{2}\|\u\|_{\H}^\frac{1}{4}	\|\u\|_{\H^2(\Omega)\cap \V}^\frac{7}{4}+\frac{\mu}{14}\|\Delta\v\|_{\H}^2 \nonumber
			\\& \leq C\|\v\|_{\H}^2	\|\u\|_{\H}	\|\u\|_{\H^2(\Omega)\cap \V}^7+\frac{\alpha}{4}\|\v\|_{\V}^2+\frac{\mu}{14}\|\Delta\v\|_{\H}^2.
		\end{align}
		We estimate the terms $|((\u \cdot\nabla)\v,-\Delta\v)|$ and $\beta r|(|\u|^{r-1}\v,-\Delta\v)|$ using H\"older's, Agmon's and Young's inequalities as 
		\begin{align}
			|((\u \cdot\nabla)\v,-\Delta\v)|& \leq \|\u\|_{\widetilde{\L}^\infty} \|\v\|_{\V}\|\Delta\v\|_{\H} \nonumber\\&\leq 	C\|\u\|_{\V}\|\u\|_{\H^2(\Omega)\cap \V}\|\v\|_{\V}^2+\frac{\mu}{14}\|\Delta\v\|_{\H}^2,\label{2.47}\\
			\beta r|(|\u|^{r-1}\v,-\Delta\v)| &\leq  \beta r\|\u\|_{\widetilde{\L}^\infty}^{r-1}\|\v\|_{\H}\|\Delta\v\|_{\H}	\nonumber\\&\leq C \|\u\|_{\V}^{r-1}\|\u\|_{\H^2(\Omega)\cap \V}^{r-1}\|\v\|_{\H}^2+\frac{\mu}{14}\|\Delta\v\|_{\H}^2.\label{2.48}
		\end{align}
		Using the estimates \eqref{2.46}-\eqref{2.48}, we estimate the term $|(\nabla p_t,-\Delta \v)|$ as
		\begin{align}\label{2.49}
			|(\nabla p_t,-\Delta \v)| &\leq \|\nabla p_t\|_{\H}\|\Delta \v\|_{\H} \nonumber\\&\leq\|(\u \cdot \nabla)\v\|_{\H}\|\Delta \v\|_{\H}+\|(\v \cdot \nabla)\u\|_{\H}\|\Delta \v\|_{\H} +\beta \||\u|^{r-1}\v\|_{\H}\|\Delta \v\|_{\H}\nonumber \\&\leq \frac{3\mu}{14}\|\Delta \u\|_{\H}^2+ C\|\v\|_{\H}^2	\|\u\|_{\H}	\|\u\|_{\H^2(\Omega)\cap \V}^7+\frac{\alpha}{4}\|\v\|_{\V}^2+C\|\u\|_{\V}\|\u\|_{\H^2(\Omega)\cap \V}\|\v\|_{\V}^2\nonumber\\&\quad+C \|\u\|_{\V}^{r-1}\|\u\|_{\H^2(\Omega)\cap \V}^{r-1}\|\v\|_{\H}^2.
		\end{align}
		Substituting the estimates \eqref{2.46}-\eqref{2.49} in \eqref{E301}, and then integrating from $ \epsilon$ to $t$, we find 
		\begin{align}\label{E306}
			&\|\nabla\v(t)\|_{\H}^2+\mu \int_\epsilon^t\|\Delta \u(s)\|_{\H}^2 \d s+\alpha  \int_\epsilon^t \|\v(s)\|_{\V}^2\d s \nonumber\\&\quad\leq 	\|\nabla\v(\cdot,\epsilon)\|_{\H}^2+\frac{14T}{\mu}\|\f\|_{\L^2}^2\|g_t\|_0^2+C\int_\epsilon^t\|\v(s)\|_{\H}^2	\|\u(s)\|_{\H}	\|\u(s)\|_{\H^2(\Omega)\cap \V}^7 \d s\nonumber\\&\qquad+C\int_\epsilon^t\|\u(s)\|_{\V}\|\u(s)\|_{\H^2(\Omega)\cap \V}\|\v(s)\|_{\V}^2 \d s+C\int_\epsilon^t \|\u(s)\|_{\V}^{r-1}\|\u(s)\|_{\H^2(\Omega)\cap \V}^{r-1}\|\v(s)\|_{\H}^2 \d s,
		\end{align}
		for all $t \in [\epsilon,T]$ with $\epsilon >0$. An application of Gronwall's inequality in \eqref{E306}, and then a calculation similar to $r \geq 1$ and $d=2$ yield 
		\begin{align*}
			\sup_{t\in[\epsilon,T]}\|\nabla\v(t)\|_{\H}^2 +\int_\epsilon^T\|\Delta \v(t)\|_{\H}^2\d t<\infty,
		\end{align*}
		for $r\geq 3$ and $d=3$, and $\Delta\v \in \mathrm{L}^{2}(0,T;\H)$. Since $\Omega$ is of class $\C^2$, from the elliptic regularity for the Stokes problem (Cattabriga regularity theorem, see \cite{LCa,Te1}, etc), one can conclude that $\v\in\mathrm{L}^2(0,T;\H^2(\Omega))$. 
	\end{proof}
	
	\begin{lemma}\label{lemma2.7}
		Let $\u(\cdot)$ be the unique strong solution of the CBF  equations \eqref{1a}-\eqref{1d}.  Then, for $r \geq 1, \ (d=2)\text{ and for} \ r \geq 3, \ (d=3)$, we have 
		\begin{align}\label{E35}
			\sup_{t\in[\epsilon,T]}\|\v(t)\|_{\V}^2 +\int_\epsilon^T\|\v_t(t)\|_{\H}^2\d t < \infty,
		\end{align}
		for any $0<\epsilon\leq T$.	
	\end{lemma}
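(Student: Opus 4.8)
The plan is to differentiate the system once more in $t$ and test the resulting equation against $\v_t(\cdot)=\u_{tt}(\cdot)$. Taking the $\L^2$-inner product of the first equation in \eqref{E5} with $\v_t(\cdot)$ and using that $\v_t$ is divergence free with zero boundary trace (so $(\nabla p_t,\v_t)=0$), the dissipative and Darcy terms become exact time derivatives and one arrives at
\[
\|\v_t(t)\|_{\H}^2+\frac{\mu}{2}\frac{\d}{\d t}\|\nabla\v(t)\|_{\H}^2+\frac{\alpha}{2}\frac{\d}{\d t}\|\v(t)\|_{\H}^2=(\f g_t,\v_t)-((\v\cdot\nabla)\u,\v_t)-((\u\cdot\nabla)\v,\v_t)-\beta r(|\u|^{r-1}\v,\v_t).
\]
Since $\u_{tt}\in\L^2$ is not yet known, this identity is to be understood as the limit of a Galerkin scheme (or of time difference quotients); making this step rigorous — rather than the estimates that follow — is the genuine technical obstacle.

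I would then bound the four terms on the right using H\"older's, Young's, Agmon's and Gagliardo--Nirenberg's inequalities so as to absorb each copy of $\|\v_t\|_{\H}^2$ into the left-hand side: $|(\f g_t,\v_t)|\leq\tfrac18\|\v_t\|_{\H}^2+2\|\f\|_{\L^2}^2\|g_t\|_0^2$; using $\V\hookrightarrow\widetilde{\L}^4$ and $\|\nabla\u\|_{\widetilde{\L}^4}\leq C\|\u\|_{\H^2}$ (valid for $d=2,3$),
\[
|((\v\cdot\nabla)\u,\v_t)|\leq\|\v\|_{\widetilde{\L}^4}\|\nabla\u\|_{\widetilde{\L}^4}\|\v_t\|_{\H}\leq\tfrac18\|\v_t\|_{\H}^2+C\|\v\|_{\V}^2\|\u\|_{\H^2}^2;
\]
and using Agmon's inequality $\|\u\|_{\widetilde{\L}^\infty}\leq C\|\u\|_{\H^2(\Omega)\cap\V}$,
\[
|((\u\cdot\nabla)\v,\v_t)|\leq\|\u\|_{\widetilde{\L}^\infty}\|\nabla\v\|_{\H}\|\v_t\|_{\H}\leq\tfrac18\|\v_t\|_{\H}^2+C\|\u\|_{\H^2(\Omega)\cap\V}^2\|\v\|_{\V}^2,
\]
\[
\beta r|(|\u|^{r-1}\v,\v_t)|\leq\beta r\|\u\|_{\widetilde{\L}^\infty}^{r-1}\|\v\|_{\H}\|\v_t\|_{\H}\leq\tfrac18\|\v_t\|_{\H}^2+C\|\u\|_{\H^2(\Omega)\cap\V}^{2(r-1)}\|\v\|_{\H}^2.
\]
Collecting everything gives
\[
\tfrac12\|\v_t(t)\|_{\H}^2+\frac{\mu}{2}\frac{\d}{\d t}\|\nabla\v(t)\|_{\H}^2+\frac{\alpha}{2}\frac{\d}{\d t}\|\v(t)\|_{\H}^2\leq h(t),
\]
with $h(t):=2\|\f\|_{\L^2}^2\|g_t\|_0^2+C\big(\|\u(t)\|_{\H^2(\Omega)\cap\V}^2+\|\u(t)\|_{\H^2(\Omega)\cap\V}^{2(r-1)}\big)\big(\|\v(t)\|_{\V}^2+\|\v(t)\|_{\H}^2\big)$. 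In contrast to the proof of Lemma \ref{lemma2.6}, no Gronwall inequality is needed here, because the weights $\|\v\|_{\V}^2$ and $\|\v\|_{\H}^2$ are already controlled.

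To conclude, I would observe that $h\in\mathrm{L}^1(\epsilon,T)$: indeed $\u\in\mathrm{L}^{\infty}(0,T;\H^2(\Omega)\cap\V)$ by Lemma \ref{lem2.5}, $\v\in\mathrm{L}^{\infty}(0,T;\H)$ by Lemma \ref{lemma2}, and $\v\in\C([\epsilon,T];\V)$, so $\sup_{t\in[\epsilon,T]}\|\v(t)\|_{\V}<\infty$, by Lemma \ref{lemma2.6}. Integrating the last differential inequality from $\epsilon$ to $t$ — the boundary term $\|\v(\epsilon)\|_{\V}$ being finite, again by Lemma \ref{lemma2.6} — and taking the supremum over $t\in[\epsilon,T]$ yields
\[
\sup_{t\in[\epsilon,T]}\Big(\tfrac{\mu}{2}\|\nabla\v(t)\|_{\H}^2+\tfrac{\alpha}{2}\|\v(t)\|_{\H}^2\Big)+\tfrac12\int_\epsilon^T\|\v_t(t)\|_{\H}^2\d t\leq\tfrac{\mu}{2}\|\v(\epsilon)\|_{\V}^2+\tfrac{\alpha}{2}\|\v(\epsilon)\|_{\H}^2+\int_\epsilon^Th(t)\d t<\infty,
\]
which is exactly \eqref{E35}. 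The argument runs identically for $d=2$ ($r\geq1$) and $d=3$ ($r\geq3$); only the exponents in the Gagliardo--Nirenberg/Agmon embeddings change, and they remain admissible throughout these ranges. As noted, the only delicate point is the rigorous justification of testing with $\u_{tt}$; the rest is a routine energy estimate resting on Lemmas \ref{lemma1}--\ref{lemma2.6}.
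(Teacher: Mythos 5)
Your argument is correct and rests on exactly the same energy identity as the paper's proof: test the time-differentiated system \eqref{E5} with $\v_t(\cdot)$, discard the pressure term by orthogonality of $\G(\Omega)$ and $\H$, and absorb each copy of $\|\v_t\|_{\H}^2$ by Young's inequality; your individual bounds on the four right-hand terms are coarser than but equivalent in spirit to \eqref{2.39}--\eqref{2.41} and their $d=3$ analogues. The genuine difference is in how the differential inequality is closed. You invoke Lemma \ref{lemma2.6} to assert that $\|\v(\cdot,\epsilon)\|_{\V}$ and $\sup_{t\in[\epsilon,T]}\|\v(t)\|_{\V}$ are already finite, so that $h\in\mathrm{L}^1(\epsilon,T)$ and a single integration from $\epsilon$ to $t$ finishes the proof with no Gronwall step. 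The paper instead makes Lemma \ref{lemma2.7} independent of Lemma \ref{lemma2.6}: it keeps the nonlinear contributions in the form $\|\u\|_{\H}\|\u\|_{\H^2(\Omega)\cap\V}\|\v\|_{\V}^2$ so that Gronwall's inequality applies, obtains a bound on $\mu\|\v(t)\|_{\V}^2$ involving the unknown quantity $\|\v(\cdot,\epsilon)\|_{\V}^2$, and then integrates that bound over $\epsilon\in(0,T)$ so that only $\int_0^T\|\v(\cdot,\epsilon)\|_{\V}^2\,\d\epsilon$ is needed, which is finite by Lemma \ref{lemma2}; this yields the explicit quantitative bound $C\big(\|\u_0\|_{\H^2(\Omega)\cap\V}+\|\f\|_{\L^2}^2\|g_t\|_0^2\big)$. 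Since Lemma \ref{lemma2.6} is proved before Lemma \ref{lemma2.7} and the statement only claims finiteness, your shortcut is legitimate and somewhat cleaner; what the paper's route buys is the data-explicit constant and independence from the preceding lemma. Your caveat about rigorously justifying the pairing with $\u_{tt}$ (via Galerkin or difference quotients) is fair, but the paper performs this step at the same formal level, so no gap is introduced relative to it.
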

	\begin{proof}
		Taking the inner product with $\v_t(\cdot)$ to the first equation in \eqref{E5}, we find 
		\begin{align}\label{E36}
			\nonumber&\|\v_t(t)\|_{\H}^2+\frac{\mu}{2}\frac{\d}{\d t}\|\v(t)\|_{\V}^2+\frac{\alpha}{2}\frac{\d}{\d t} \|\v(t)\|_{\H}^2=(\f g_t(t),\v_t(t))\\&\quad-((\v(t) \cdot\nabla)\u(t),\v_t(t))-((\u(t) \cdot \nabla)\v(t),\v_t(t))-\beta r(|\u(t)|^{r-1}\v(t),\v_t(t)),
		\end{align}
		for a.e. $t\in[\epsilon, T]$, for some $0<\epsilon\leq T$.	Using H\"older's and Young's inequalities, we estimate  $|(\f g_t,\v_t)|$ as 
		\begin{align*}
			|(\f g_t,\v_t)|\leq\|\f\|_{\L^2}\|g_t\|_{\mathrm{L}^{\infty}}\|\v_t\|_{\H}\leq4\|\f\|_{\L^2}^2\|g_t\|_{\mathrm{L}^{\infty}}^2+\frac{1 }{8}\|\v_t\|_{\H}^2.
		\end{align*}
		\vskip 0.2 cm
		\noindent\textbf{Case I:} \emph{$d=2$ and $r \geq 1$.} Calculations similar to \eqref{2.39}-\eqref{2.41} yield 
		\begin{align*}
			|((\v \cdot\nabla)\u,\v_t)| &\leq C\|\v\|_{\H}	\|\v\|_{\V}\|\u\|_{\H}^\frac{1}{2}	\|\u\|_{\H^2(\Omega)\cap \V}^\frac{3}{2}+\frac{1}{8}\|\v_t\|_{\H}^2,\nonumber\\
			|((\u \cdot\nabla)\v,\v_t)|&\leq
			C\|\u\|_{\H}^\frac{1}{2}\|\u\|_{\H^2(\Omega)\cap \V}^\frac{1}{2}\|\v\|_{\V}\|\v_t\|_{\H}\leq 	C\|\u\|_{\H}\|\u\|_{\H^2(\Omega)\cap \V}\|\v\|_{\V}^2+\frac{1}{8}\|\v_t\|_{\H}^2,\\
			\beta r|(|\u|^{r-1}\v,\v_t)|& 
			\leq C \|\u\|_{\H}^\frac{r-1}{2}\|\u\|_{\H^2(\Omega)\cap \V}^\frac{r-1}{2}\|\v\|_{\H}\|\v_t\|_{\H} 	\leq C \|\u\|_{\H}^{r-1}\|\u\|_{\H^2(\Omega)\cap \V}^{r-1}\|\v\|_{\H}^2+\frac{1}{8}\|\v_t\|_{\H}^2.
		\end{align*}
		Substituting these estimates in \eqref{E36}, and then integrating from $0<\epsilon$ to $t$, we find
		\begin{align}\label{E38}
			\nonumber&\int_\epsilon^t\|\v_t(s)\|_{\H}^2\d s+\mu \|\v(t)\|_{\V}^2+\alpha \|\v(t)\|_{\H}^2 \nonumber\\&\quad\leq \mu \|\v(\cdot,\epsilon)\|_{\V}^2+\alpha \|\v(\cdot,\epsilon)\|_{\H}^2+8T\|\f\|_{\L^2}^2\|g_t\|_0^2\nonumber\\&\qquad+C\int_\epsilon^t \|\v(s)\|_{\H}	\|\v(s)\|_{\V}\|\u(s)\|_{\H}^\frac{1}{2}	\|\u(s)\|_{\H^2(\Omega)\cap \V}^\frac{3}{2}\d s\nonumber\\&\qquad+C\int_\epsilon^t \|\u(s)\|_{\H}\|\u(s)\|_{\H^2(\Omega)\cap \V}\|\v(s)\|_{\V}^2 \d s +C\int_\epsilon^t \|\u(s)\|_{\H}^{r-1}\|\u(s)\|_{\H^2(\Omega)\cap \V}^{r-1}\|\v(s)\|_{\H}^2\d s,
		\end{align}
		for all $t \in [\epsilon,T]$. Applying Gronwall's inequality in \eqref{E38}, we obtain
		\begin{align*}
			\mu \|\v(t)\|_{\V}^2  
			&\leq \bigg\{\mu \|\v(\cdot,\epsilon)\|_{\V}^2+\alpha \|\v(\cdot,\epsilon)\|_{\H}^2+8T\|\f\|_{\L^2}^2\|g_t\|_0^2\nonumber\\&\qquad+C \sup_{t\in[\epsilon,T]}\|\v(t)\|_{\H}	\sup_{t\in[\epsilon,T]}\|\u(t)\|_{\H}^\frac{1}{2}	\sup_{t\in[\epsilon,T]}\|\u(t)\|_{\H^2(\Omega)\cap \V}^\frac{3}{2}(T-\epsilon)^\frac{1}{2}\left(\int_\epsilon^T \|\v(t)\|_{\V}^2\d t\right)^\frac{1}{2}\nonumber\\&\qquad+C T \sup_{t\in[\epsilon,T]}\|\u(t)\|_{\H}^{r-1}\sup_{t\in[\epsilon,T]}\|\u(t)\|_{\H^2(\Omega)\cap \V}^{r-1}\sup_{t\in[\epsilon,T]}\|\v(t)\|_{\H}^2\d t\bigg\}
			\\&\qquad \times	\exp \bigg(C T \sup_{t\in[\epsilon,T]}\|\u(t)\|_{\H} \sup_{t\in[\epsilon,T]}\|\u(t)\|_{\H^2(\Omega)\cap \V} \bigg),
		\end{align*}
		for all $t \in [\epsilon,T]$. 	Integrate the above estimate over $\epsilon$ from $0$ to $T$ and then using the energy estimates given in Lemmas \ref{lemma1}-\ref{lem2.5}, we deduce 
		\begin{align}
			\|\v(t)\|_{\V}^2&\leq \frac{C}{\mu  T}\bigg\{\left(\mu +\frac{\alpha}{\lambda_1}\right)\int_0^T\|\v(\cdot,\epsilon)\|_{\V}^2\d\epsilon+\|\f\|_{\L^2}^2\|g_t\|_0^2\bigg\}\nonumber\\&\leq C\left(\|\u_0\|_{\H^2(\Omega) \cap \V}+\|\f\|_{\L^2}^2\|g_t\|_0^2\right).
		\end{align}
		Thus from  \eqref{E38}, it is immediate that
		\begin{align*}
			\sup_{t\in[\epsilon,T]}\|\v(t)\|_{\V}^2 +\int_\epsilon^T\|\v_t(t)\|_{\H}^2\d t \leq C\left(\|\u_0\|_{\H^2(\Omega) \cap \V}+\|\f\|_{\L^2}^2\|g_t\|_0^2\right),
		\end{align*}
		and $\v_t \in \mathrm{L}^{2}(\epsilon,T;\H)$.
		
		\vskip 0.2 cm
		\noindent\textbf{Case II:} \emph{$d=3$ and $r \geq 3$.}  Calculations similar to \eqref{2.46}-\eqref{2.48} gives 
		\begin{align*}
			|((\v \cdot\nabla)\u,\v_t)| &\leq C\|\v\|_{\H}^\frac{1}{2}	\|\v\|_{\V}^\frac{3}{2}\|\u\|_{\H}^\frac{1}{4}	\|\u\|_{\H^2(\Omega)\cap \V}^\frac{7}{4}+\frac{1}{8}\|\v_t\|_{\H}^2,\nonumber\\
			|((\u \cdot\nabla)\v,\v_t)| & \leq
			C\|\u\|_{\V}^\frac{1}{2}\|\u\|_{\H^2(\Omega)\cap \V}^\frac{1}{2}\|\v\|_{\V}\|\v_t\|_{\H}\leq 	C\|\u\|_{\V}\|\u\|_{\H^2(\Omega)\cap \V}\|\v\|_{\V}^2+\frac{1}{8}\|\v_t\|_{\H}^2,\\
			\beta r|(|\u|^{r-1}\v,\v_t)| &
			\leq C \|\u\|_{\V}^\frac{r-1}{2}\|\u\|_{\H^2(\Omega)\cap \V}^\frac{r-1}{2}\|\v\|_{\H}\|\v_t\|_{\H} 	\leq C \|\u\|_{\V}^{r-1}\|\u\|_{\H^2(\Omega)\cap \V}^{r-1}\|\v\|_{\H}^2+\frac{1}{8}\|\v_t\|_{\H}^2.
		\end{align*}
		Substituting these estimates in \eqref{E36}, and then integrating from $\epsilon$ to $t$, we find
		\begin{align}\label{E40}
			\nonumber&\int_\epsilon^t\|\v_t(s)\|_{\H}^2\d s+\mu \|\v(t)\|_{\V}^2+\alpha \|\v(t)\|_{\H}^2 \nonumber\\&\quad\leq \mu \|\v(\cdot,\epsilon)\|_{\V}^2+\alpha \|\v(\cdot,\epsilon)\|_{\H}^2+8T\|\f\|_{\L^2}^2\|g_t\|_0^2\nonumber\\&\qquad+C\int_\epsilon^t \|\v(s)\|_{\H}^\frac{1}{2}	\|\v(s)\|_{\V}^\frac{3}{2}\|\u(s)\|_{\H}^\frac{1}{4}	\|\u(s)\|_{\H^2(\Omega)\cap \V}^\frac{7}{4}\d s\nonumber\\&\qquad+C\int_\epsilon^t \|\u(s)\|_{\V}\|\u(s)\|_{\H^2(\Omega)\cap \V}\|\v(s)\|_{\V}^2 \d s +C\int_\epsilon^t \|\u(s)\|_{\V}^{r-1}\|\u(s)\|_{\H^2(\Omega)\cap \V}^{r-1}\|\v(s)\|_{\H}^2\d s,
		\end{align}
		for all $t \in [\epsilon,T]$ with $\epsilon >0$. An application of Gronwall's inequality in \eqref{E40}, and then a calculation similar to $r \geq 1$ and $d=2$, one can easily conclude that
		\begin{align*}
			\sup_{t\in[\epsilon,T]}\|\v(t)\|_{\V}^2 +\int_\epsilon^T\|\v_t(t)\|_{\H}^2\d t<\infty,
		\end{align*}
		for $r\geq 3$ and $d=3$, and $\v_t \in \mathrm{L}^{2}(\epsilon,T;\H)$. 
	\end{proof}
	\begin{remark}
		From Lemmas \ref{lemma2.6} and \ref{lemma2.7}, it is clear that $$\u_t\in\C([\epsilon,T];\V)\cap\mathrm{L}^2(0,T;\H^2(\Omega)\cap \V),\ \u_{tt}\in\mathrm{L}^2(0,T;\H),$$ for any $0<\epsilon\leq T$. 	Furthermore the fact that $\u_t\in\mathrm{H}^1(0,T;\H)$  implies that $\u_t\in\C([0,T];\H)$. 
	\end{remark}

	\section{Proof of Theorem \ref{thm2}}\label{sec5}\setcounter{equation}{0}
	The energy estimates obtained in the previous section allow us to prove the existence and uniqueness of a solution to the inverse problem \eqref{1a}-\eqref{1e}	as well as the stability of the solution. For the existence of a solution to the  inverse problem \eqref{1a}-\eqref{1e},  making use of Theorem \ref{thm1}, 	it is enough to prove that the nonlinear operator $\mathcal{B}$ has a fixed point in $\mathcal{D}$, which follows by an application of Schauder's fixed point theorem:
	\begin{theorem}[Schauder's fixed point theorem \cite{HZ}]\label{thmS}
		Let $\mathcal{D}$ be a non-empty closed bounded subset of a Banach space $X$ and assume that $\mathcal{B} : \mathcal{D} \to \mathcal{D}$ is compact. Then, $\mathcal{B}$ has at least one fixed point in $\mathcal{D}$.
	\end{theorem}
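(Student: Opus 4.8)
The plan is to reduce this infinite-dimensional fixed-point assertion to Brouwer's fixed point theorem in finite dimensions by means of the \emph{Schauder projection}, which approximates the relatively compact image $\overline{\mathcal{B}(\mathcal{D})}$ by finite-dimensional polytopes to any prescribed accuracy. I read $\mathcal{D}$ as being convex as well (the standard hypothesis, satisfied by the closed unit ball $\{\f\in\L^2(\Omega):\|\f\|_{\L^2}\leq1\}$ to which the theorem is applied here); convexity is indispensable, since without it the conclusion is false.

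I would first record the two ingredients. Recall Brouwer's theorem: every continuous self-map of a nonempty compact convex subset of $\R^n$ possesses a fixed point. Since $\mathcal{B}$ is compact, the image $\mathcal{B}(\mathcal{D})$ is relatively compact, hence totally bounded, and $K:=\overline{\mathcal{B}(\mathcal{D})}\subset\mathcal{D}$ is compact. For $\epsilon>0$ I pick a finite $\epsilon$-net $y_1,\dots,y_n\in\mathcal{B}(\mathcal{D})$ for $\mathcal{B}(\mathcal{D})$ and set $\mu_i(y):=\max\{0,\epsilon-\|y-y_i\|\}$. Every $y\in\mathcal{B}(\mathcal{D})$ lies within $\epsilon$ of some center, so $\sum_{i=1}^n\mu_i(y)>0$ there, and the Schauder projection
\begin{align*}
P_\epsilon(y):=\frac{\sum_{i=1}^n\mu_i(y)\,y_i}{\sum_{i=1}^n\mu_i(y)}
\end{align*}
is continuous on $\mathcal{B}(\mathcal{D})$, takes values in $C:=\mathrm{conv}\{y_1,\dots,y_n\}\subset\mathcal{D}$, and—since only centers with $\|y-y_i\|<\epsilon$ contribute to the convex combination—satisfies the pointwise bound $\|P_\epsilon(y)-y\|\leq\epsilon$.

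Next I would invoke Brouwer. The set $C$ is a nonempty compact convex subset of the finite-dimensional space $\mathrm{span}\{y_1,\dots,y_n\}$, and $P_\epsilon\circ\mathcal{B}:C\to C$ is continuous, so it admits a fixed point $x_\epsilon\in C$, that is $P_\epsilon(\mathcal{B}(x_\epsilon))=x_\epsilon$. The projection bound then produces a vanishing defect,
\begin{align*}
\|x_\epsilon-\mathcal{B}(x_\epsilon)\|=\|P_\epsilon(\mathcal{B}(x_\epsilon))-\mathcal{B}(x_\epsilon)\|\leq\epsilon.
\end{align*}
Taking $\epsilon=1/k$ yields $x_k\in\mathcal{D}$ with $\|x_k-\mathcal{B}(x_k)\|\to0$. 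As $\mathcal{B}(x_k)\in K$ and $K$ is compact, a subsequence $\mathcal{B}(x_{k_j})\to z$ for some $z\in K\subset\mathcal{D}$; the vanishing defect forces $x_{k_j}\to z$ as well, and continuity of $\mathcal{B}$ gives $\mathcal{B}(z)=\lim_j\mathcal{B}(x_{k_j})=z$, the sought fixed point.

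The crux of the argument is arranging the finite-dimensional approximation so that Brouwer genuinely applies: one needs $P_\epsilon\circ\mathcal{B}$ to map the compact convex polytope $C$ into itself, which is exactly where convexity of $\mathcal{D}$ (guaranteeing $C\subset\mathcal{D}$) and compactness of $\mathcal{B}$ (for the finite net and for the final subsequence extraction) both enter. The one delicate bookkeeping step is verifying $\|P_\epsilon(y)-y\|\leq\epsilon$, i.e. ensuring that distant centers receive zero weight; everything else is routine once this estimate is in hand.
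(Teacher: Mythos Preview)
The paper does not prove this theorem; it is simply quoted from \cite{HZ} and used as a black box, so there is no paper proof to compare against. Your argument is the standard Schauder-projection reduction to Brouwer and is correct. Your remark that convexity of $\mathcal{D}$ must be assumed is well taken: as literally stated (``closed bounded'' only) the theorem is false---a rotation of the unit circle in $\R^2$ is a counterexample---and the set $\mathcal{D}=\{\f\in\L^2(\Omega):\|\f\|_{\L^2}\leq1\}$ in the application is of course convex.
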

	
	\subsection{Existence}
	We shall begin by proving Theorem \ref{thm2} (i) by checking that the nonlinear operator $\mathcal{B}$ defined in \eqref{1i} satisfies all the assumptions given in Theorem \ref{thmS}. The following lemma provides a well-posedness of the operator $\mathcal{B}$. 
	\begin{lemma}
		Let $\u_0, \boldsymbol{\varphi} \in \H^2(\Omega) \cap \V $, $\nabla \psi \in \G(\Omega)$, $g, g_t \in \C(\overline\Omega\times [0,T])$ satisfy the assumption \eqref{1f}. Then the operator $\mathcal{B}$ maps $\mathcal{D}$ into $\L^2(\Omega)$.
	\end{lemma}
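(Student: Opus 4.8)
The plan is to verify, term by term, that each summand appearing in the definition \eqref{1i}--\eqref{1j} of $\mathcal{B}\f$ belongs to $\L^2(\Omega)$, and then to use the lower bound \eqref{1f} on $g(\cdot,T)$ to ensure that division by $g(\cdot,T)$ preserves square-integrability. First I would observe that for $\f\in\mathcal{D}$ the forcing $\boldsymbol{F}=\f g$ satisfies $\boldsymbol{F}\in\mathrm{W}^{1,1}([0,T];\H)$ with a norm controlled by $\|g\|_0$ and $\|g_t\|_0$ only, since $\|\f\|_{\L^2}\le 1$. Hence, by Lemma \ref{lem2.5} (and, for $d=2$, $r\in[1,3]$, its analogue discussed right after it), the direct problem \eqref{1a}--\eqref{1d} has a \emph{unique} strong solution $\u$ with $\u\in\mathrm{L}^\infty(0,T;\H^2(\Omega)\cap\V)$ and $\u_t\in\mathrm{L}^\infty(0,T;\H)$. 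By the Remark following Lemma \ref{lemma2.7}, in fact $\u_t\in\C([0,T];\H)$, so the trace $\u_t(\cdot,T)$ is a well-defined element of $\H\subset\L^2(\Omega)$; this is exactly $(\mathcal{A}\f)(x)$. Moreover Lemma \ref{lemma2} yields the quantitative bound
\begin{align*}
\|\mathcal{A}\f\|_{\L^2}^2=\|\u_t(T)\|_{\H}^2\le C\left(\|\u_0\|_{\H^2(\Omega)\cap\V}+\|g_t\|_0^2\right),
\end{align*}
where we again used $\|\f\|_{\L^2}\le 1$. In particular $\mathcal{A}\f\in\L^2(\Omega)$ with a bound independent of the choice of $\f\in\mathcal{D}$.

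Next I would dispose of the remaining, $\f$-independent terms, all of which are controlled by the $\H^2$-regularity of $\boldsymbol{\varphi}$. Since $d=2,3$, we have the Sobolev embedding $\H^2(\Omega)\subset\L^\infty(\Omega)\cap\L^{2r}(\Omega)$ (equivalently, Agmon's inequality plus $\H^2(\Omega)\subset\L^p(\Omega)$ for all finite $p$), and as $\boldsymbol{\varphi}$ is divergence free it lies in $\widetilde{\L}^\infty\cap\widetilde{\L}^{2r}$. Therefore $\mu\Delta\boldsymbol{\varphi}\in\L^2(\Omega)$ because $\boldsymbol{\varphi}\in\H^2(\Omega)$; $\alpha\boldsymbol{\varphi}\in\L^2(\Omega)$ trivially; $\|(\boldsymbol{\varphi}\cdot\nabla)\boldsymbol{\varphi}\|_{\L^2}\le\|\boldsymbol{\varphi}\|_{\widetilde{\L}^\infty}\|\nabla\boldsymbol{\varphi}\|_{\H}<\infty$; and $\beta\||\boldsymbol{\varphi}|^{r-1}\boldsymbol{\varphi}\|_{\L^2}=\beta\|\boldsymbol{\varphi}\|_{\widetilde{\L}^{2r}}^{r}<\infty$. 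Finally $\nabla\psi\in\G(\Omega)\subset\L^2(\Omega)$ by hypothesis. Consequently the entire bracket in \eqref{1j} is an $\L^2(\Omega)$ function with norm bounded by a constant depending only on $\mu,\alpha,\beta,r$, the data $\u_0,\boldsymbol{\varphi},\nabla\psi$ and $\|g_t\|_0$.

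For the last step I would invoke assumption \eqref{1f}: since $|g(x,T)|\ge g_T>0$ for every $x\in\overline\Omega$ and $g(\cdot,T)\in\C(\overline\Omega)$, the multiplier $1/g(\cdot,T)$ belongs to $\C(\overline\Omega)$ with $\|1/g(\cdot,T)\|_0\le 1/g_T$, so multiplication by it is a bounded linear operator on $\L^2(\Omega)$. Combining this with the two previous steps gives
\begin{align*}
\mathcal{B}\f=\frac{1}{g(\cdot,T)}\big(\mathcal{A}\f+(\boldsymbol{\varphi}\cdot\nabla)\boldsymbol{\varphi}+\nabla\psi-\mu\Delta\boldsymbol{\varphi}+\alpha\boldsymbol{\varphi}+\beta|\boldsymbol{\varphi}|^{r-1}\boldsymbol{\varphi}\big)\in\L^2(\Omega),
\end{align*}
which is exactly the assertion. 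I expect the only non-routine point to be making the trace $\u_t(\cdot,T)$ and the estimate on $\|\mathcal{A}\f\|_{\L^2}$ rigorous, i.e.\ carefully chaining the strong-solvability statement (Lemma \ref{lem2.5}), the $\u_t$-estimate (Lemma \ref{lemma2}), and the time-continuity noted in the Remark after Lemma \ref{lemma2.7}; the term-by-term $\L^2$ bounds on the data are immediate consequences of $\boldsymbol{\varphi}\in\H^2(\Omega)\cap\V$ and $\nabla\psi\in\G(\Omega)$.
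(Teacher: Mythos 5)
Your proposal is correct and follows essentially the same route as the paper: bound $\|\mathcal{B}\f\|_{\L^2}$ by $g_T^{-1}$ times the sum of the $\L^2$-norms of the individual terms, control $\|(\boldsymbol{\varphi}\cdot\nabla)\boldsymbol{\varphi}\|_{\H}$ and $\beta\||\boldsymbol{\varphi}|^{r-1}\boldsymbol{\varphi}\|_{\H}$ via Agmon's inequality and Sobolev embedding, and control $\|\u_t(\cdot,T)\|_{\H}$ via the estimate of Lemma \ref{lemma2} evaluated at $t=T$. Your extra care in justifying the trace $\u_t(\cdot,T)$ through the time-continuity of $\u_t$ is a welcome (if minor) sharpening of the paper's argument.
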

	\begin{proof}
		Let $\f \in \mathcal{D}$. From \eqref{1j}, we deduce that
		\begin{align}\label{B1}
			\| \mathcal{B}\f\|_{\L^2}\leq \frac{1}{g_T}\left[\|\u_t(x,T)\|_{\H}+\|(\boldsymbol{\varphi}\cdot \nabla)\boldsymbol{\varphi}\|_{\H}+\|\nabla \psi-\mu \Delta\boldsymbol{\varphi}\|_{\H}+\alpha\|\boldsymbol{\varphi}\|_{\H}+ \beta\||\boldsymbol{\varphi}|^{r-1}\boldsymbol{\varphi}\|_{\H}\right].
		\end{align}
		Using H\"older's, Agmon's and Poincar\'e's inequalities, we obtain
		\begin{align*}
			\|(\boldsymbol{\varphi}\cdot \nabla)\boldsymbol{\varphi}\|_{\H} \leq\|\boldsymbol{\varphi}\|_{\widetilde{\L}^{\infty}}\|\nabla\boldsymbol{\varphi}\|_{\H}\leq C \|\boldsymbol{\varphi}\|_{\H}^\frac{1}{2}\|\boldsymbol{\varphi}\|_{\H^2(\Omega) \cap \V}^\frac{1}{2}\|\nabla\boldsymbol{\varphi}\|_{\H} \leq C\|\boldsymbol{\varphi}\|_{\H^2(\Omega) \cap \V}^{2}, \ \ (\text{for}\ \  d=2),
		\end{align*}
		\begin{align*}
			\|(\boldsymbol{\varphi}\cdot \nabla)\boldsymbol{\varphi}\|_{\H} \leq\|\boldsymbol{\varphi}\|_{\widetilde{\L}^{\infty}}\|\nabla\boldsymbol{\varphi}\|_{\H}\leq C \|\boldsymbol{\varphi}\|_{\V}^\frac{1}{2}\|\boldsymbol{\varphi}\|_{\H^2(\Omega) \cap \V}^\frac{1}{2}\|\nabla\boldsymbol{\varphi}\|_{\H} \leq C\|\boldsymbol{\varphi}\|_{\H^2(\Omega) \cap \V}^{2},\ \ (\text{for}\ \  d=3).
		\end{align*}
		Using Sobolev embedding theorem, one can easily find
		\begin{align}
			\beta\||\boldsymbol{\varphi}|^{r-1}\boldsymbol{\varphi}\|_{\H}&\leq \beta \|\boldsymbol{\varphi}\|_{\widetilde{\L}^{2r}}^r\leq C\|\boldsymbol{\varphi}\|_{\V}^r < +\infty \ \ \ \ (\text{for}\ \  d=2), \label{B2}\\ 	\beta\||\boldsymbol{\varphi}|^{r-1}\boldsymbol{\varphi}\|_{\H}&\leq \beta \|\boldsymbol{\varphi}\|_{\widetilde{\L}^{2r}}^r\leq C\|\boldsymbol{\varphi}\|_{\H^2(\Omega) \cap \V}^r < +\infty\ \ \ (\text{for}\ \  d=3).\label{B3}
		\end{align} 
		At the final time $t=T$, from  \eqref{E7}, we get
		\begin{align}\label{B4}
			\|\u_t(x,T)\|_{\H}^2\leq C\left(\|\u_0\|_{\H^2(\Omega) \cap \V}^2+\|\f\|_{\L^2}^2\|g_t\|_0^2\right).
		\end{align}
		From \eqref{E6}, we  infer that 
		\begin{align}\label{B5}
			\|\v(0)\|_{\H}\leq L_0,
		\end{align}
		holds for some positive constant $L_0$. Hence, by the above estimates, we obtain  
		\begin{align*}
			\| \mathcal{B}\f\|_{\H}&\leq \frac{1}{g_T}\left[L_0+C\|\f\|_{\L^2}\|g_t\|_0+C\|\boldsymbol{\varphi}\|_{\H^2(\Omega) \cap \V}^{2}+\|\nabla \psi-\mu \Delta\boldsymbol{\varphi}\|_{\H}+ C\|\boldsymbol{\varphi}\|_{\H^2(\Omega) \cap \V}^r\right],
		\end{align*}
		and  the proof follows.
	\end{proof}
	The next lemma establishes the existence of a solution to the inverse problem \eqref{1a}-\eqref{1e}.
	\begin{lemma}
		Let $\u_0 \in \H^{2}(\Omega) \cap \V$ and $g, g_t \in \C(\overline\Omega\times [0,T])$ satisfy the assumption \eqref{1f}. Then the operator $\mathcal{B}$ is completely continuous on $\mathcal{D}$.
	\end{lemma}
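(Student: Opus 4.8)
The plan is to observe that $\mathcal{B}$ differs from the map $\f\mapsto\mathcal{A}\f=\v(T)=\u_t(\cdot,T)$ only by adding the fixed element $\h:=(\boldsymbol{\varphi}\cdot\nabla)\boldsymbol{\varphi}+\nabla\psi-\mu\Delta\boldsymbol{\varphi}+\alpha\boldsymbol{\varphi}+\beta|\boldsymbol{\varphi}|^{r-1}\boldsymbol{\varphi}\in\L^2(\Omega)$ and multiplying by the fixed function $g(\cdot,T)^{-1}$, which by \eqref{1f} lies in $\C(\overline\Omega)$ with $|g(x,T)^{-1}|\le g_T^{-1}$; indeed, by \eqref{1j}, $\mathcal{B}\f=g(\cdot,T)^{-1}\big(\mathcal{A}\f+\h\big)$. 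Since multiplication by a fixed bounded continuous function is a bounded linear operator on $\L^2(\Omega)$, hence continuous and carrying precompact sets to precompact sets, it suffices to prove that the map $\mathcal{A}\colon\mathcal{D}\to\L^2(\Omega)$ — with $\u$ the unique strong solution of \eqref{1a}--\eqref{1d} for $\boldsymbol{F}=\f g$ and $\v=\u_t$ — is continuous and maps $\mathcal{D}$ into a precompact subset of $\L^2(\Omega)$; together these yield the complete continuity of $\mathcal{B}$ on $\mathcal{D}$.

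For the precompactness I would fix any $\epsilon\in(0,T)$ and invoke Lemma \ref{lemma2.7}: for every $\f\in\mathcal{D}$, because $\|\f\|_{\L^2}\le1$,
\begin{align*}
\|\mathcal{A}\f\|_{\V}^2=\|\v(T)\|_{\V}^2\le\sup_{t\in[\epsilon,T]}\|\v(t)\|_{\V}^2\le C,
\end{align*}
where $C$ depends only on $\u_0$, $g_t$ and the parameters $\mu,\alpha,\beta,r,T,\Omega$, but not on $\f$. Hence $\mathcal{A}(\mathcal{D})$ is bounded in $\V$, and the compact Sobolev embedding $\V\hookrightarrow\hookrightarrow\H$ gives that $\mathcal{A}(\mathcal{D})$ is precompact in $\H=\L^2(\Omega)$.

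For the continuity, take $\f_n\to\f$ in $\L^2(\Omega)$ with corresponding strong solutions $\u_n,\u$ (the data $\u_0$ and $g$ being kept fixed), and set $\w_n=\u_n-\u$. I would first subtract the two copies of \eqref{1a}, test with $\w_n$, use that the Forchheimer difference $\beta(|\u_n|^{r-1}\u_n-|\u|^{r-1}\u,\w_n)$ is nonnegative (as in \eqref{3e1}), and absorb the convective term $|((\w_n\cdot\nabla)\w_n,\u)|$ into $\tfrac{\mu}{2}\|\w_n\|_{\V}^2$ together with, for $d=3$ and $r\ge3$, the nonnegative Forchheimer term, exactly as in \eqref{E14}--\eqref{E17}; since $\w_n(0)=\boldsymbol{0}$, Gronwall's inequality then yields
\begin{align*}
\sup_{t\in[0,T]}\|\w_n(t)\|_{\H}^2+\mu\int_0^T\|\w_n(t)\|_{\V}^2\d t\le C\|\f_n-\f\|_{\L^2}^2\longrightarrow0,
\end{align*}
with $C$ uniform in $n$. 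By the previous step $\{\mathcal{A}\f_n\}=\{\v_n(T)\}$ is precompact in $\L^2(\Omega)$, so it is enough to identify every subsequential limit with $\mathcal{A}\f$. Along a subsequence $\v_n(T)\to\boldsymbol{\xi}$ in $\H$; by Lemmas \ref{lemma2.6} and \ref{lemma2.7} the sequence $\{\v_n\}$ is bounded in $\mathrm{L}^2(\epsilon,T;\H^2(\Omega))$ with $\{\v_{n,t}\}$ bounded in $\mathrm{L}^2(\epsilon,T;\H)$, so the Aubin--Lions--Simon lemma provides a further subsequence converging in $\C([\epsilon,T];\V)$, and its limit must be $\v=\u_t$ because $\w_n\to\boldsymbol{0}$ in $\mathrm{L}^2(0,T;\V)$ forces $\v_n=\partial_t\u_n\to\partial_t\u$ in the sense of $\V$-valued distributions on $(0,T)$. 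Therefore $\v_n(T)\to\v(T)=\mathcal{A}\f$ in $\H$ along that subsequence, so $\boldsymbol{\xi}=\mathcal{A}\f$; as this holds for every subsequential limit, $\mathcal{A}\f_n\to\mathcal{A}\f$ in $\L^2(\Omega)$.

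The main obstacle I anticipate is the uniform-in-$n$ difference estimate for $\w_n$ in the fast-growing regime ($d=3$, $r\ge3$): the convective term must be balanced against the Forchheimer term via the H\"older--Young splitting of \eqref{3i}, which is precisely where the restrictions $2\beta\mu\ge1$ (for $r=3$) and the $\H^2(\Omega)\cap\V$-regularity of Lemma \ref{lem2.5} come in, and one has to check that all constants — here, in Lemma \ref{lemma2.7}, and in the Aubin--Lions argument — depend only on $\u_0,g,\mu,\alpha,\beta,r,T$ and $\Omega$ and not on the particular $\f\in\mathcal{D}$, which is exactly what $\|\f\|_{\L^2}\le1$ guarantees. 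A minor technical point is that Lemmas \ref{lemma2.6}--\ref{lemma2.7} are stated only on $[\epsilon,T]$ with $\epsilon>0$; this is harmless here since only the value at $t=T$ is needed and $\v_n\in\C([0,T];\H)$.
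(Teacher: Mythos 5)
Your proposal is correct in substance but takes a genuinely different route from the paper for the continuity of $\mathcal{A}$. The reduction of $\mathcal{B}$ to $\mathcal{A}$ and the compactness of the image (uniform $\V$-bound on $\mathcal{A}\f$ over $\mathcal{D}$ plus Rellich) coincide with the paper's Step IV, except that you obtain the $\V$-bound by citing Lemma \ref{lemma2.7} directly, whereas the paper rederives it through an explicit identity for $\int_\tau^T\|\u_{tt}-\mu\P_{\H}\Delta\u_t\|_{\H}^2\d t$ and the mean-value choice of $\tau$ in \eqref{C27}; your first step, the difference estimate for $\w_n=\u_n-\u$, is exactly the paper's Step I. The real divergence is afterwards: the paper proves continuity quantitatively, by showing $\boldsymbol{F}_k\to0$ in $\mathrm{L}^2(0,T;\H)$ (hence $(\u_k-\u)_t\to0$ there), then differentiating the difference equation in $t$, bounding $\boldsymbol{F}_{kt}$ in $\mathrm{L}^2(0,T;\H)$ uniformly, and concluding $\|(\u_k-\u)_t(\cdot,T)\|_{\H}^2\le\|\boldsymbol{F}_{kt}\|_{\mathrm{L}^2(0,T;\H)}\|(\u_k-\u)_t\|_{\mathrm{L}^2(0,T;\H)}\to0$. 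You instead bypass the entire $\boldsymbol{F}_{kt}$ machinery with a soft argument: precompactness of $\{\v_n(T)\}$ plus identification of every subsequential limit via Aubin--Lions--Simon and distributional convergence of $\partial_t\u_n$. Your route is shorter and avoids the longest computations in the paper's proof, at the cost of being non-quantitative (the paper's version essentially yields a modulus of continuity in $\|\f_k-\f\|_{\L^2}$). Two caveats: first, with only $\{\v_n\}$ bounded in $\mathrm{L}^\infty(\epsilon,T;\V)\cap\mathrm{L}^2(\epsilon,T;\H^2(\Omega))$ and $\{\v_{n,t}\}$ bounded in $\mathrm{L}^2(\epsilon,T;\H)$, Simon's lemma gives relative compactness in $\C([\epsilon,T];\H)$, not $\C([\epsilon,T];\V)$ as you claim; this is harmless since convergence of $\v_n(T)$ in $\H$ is all you need, but the statement should be corrected. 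Second, both your argument and the paper's silently require that the a priori bounds of Lemmas \ref{lemma1}--\ref{lemma2.7} (in particular $\sup_{t}\|\u\|_{\H^2(\Omega)\cap\V}$, which Lemma \ref{lem2.5} asserts only qualitatively) hold uniformly over $\f\in\mathcal{D}$; you flag this explicitly, which is appropriate, but it is a shared gap rather than one you have closed.
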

	\begin{proof}
		It is enough to prove that the operator $\mathcal{A}$ is completely continuous. We will choose an element $\f$ in $\mathcal{D}$ and consider an arbitrary sequence $\{\f_k\}_{k=1}^{\infty}$ of elements $\f_k \in \mathcal{D}$ such that 
		\begin{align}
			\|\f_k-\f\|_{\L^2} \rightarrow 0 \    \text{ as } \ k \rightarrow \infty.
		\end{align}
		Let $\{\u_k,\nabla p_k\}$ be the solution of the direct problem \eqref{1a}-\eqref{1d} corresponding to the external forcing $\f_k g$ and the initial velocity $\u_0$ and let $\{\u,\nabla p\}$ be the solution of the same problem corresponding to the external forcing $\f g$ and the initial velocity $\u_0$. It is clear that the functions $(\u_k-\u)$ and $\nabla (p_k-p)$ satisfy the system
		\begin{equation}\label{C1}
			\left\{
			\begin{aligned}
				(\u_k-\u)_t-\mu \Delta (\u_k-\u)+\alpha(\u_k-\u)+\nabla(p_k-p)&=\boldsymbol{F}_k, \ \ \text{in} \ \Omega \times (0,T), \\ \nabla \cdot (\u_k-\u)&=0, \ \ \ \ \text{in} \ \Omega \times (0,T), \\
				\u_k-\u&=\boldsymbol{0}, \ \ \ \ \text{on} \ \partial\Omega \times [0,T), \\  \u_k-\u&=\boldsymbol{0}, \ \  \ \  \text{in} \ \Omega \times \{0\},
			\end{aligned}
			\right.
		\end{equation}
		where 
		\begin{align}\label{F_k}
			\boldsymbol{F}_k=(\f_k-\f)g-(\u_k \cdot \nabla)\u_k+(\u\cdot \nabla)\u-\beta(|\u_k|^{r-1}\u_k-|\u|^{r-1}\u).
		\end{align}
		The rest of the proof is divided into following steps. 
		\vskip 0.2 cm 
		\noindent\textbf{Step I:} \emph{$\u_k\to\u$  in $\mathrm{L}^{\infty}(0,T;\H)\cap\mathrm{L}^2(0,T;\V)\cap\mathrm{L}^{r+1}(0,T;\wi\L^{r+1})$:} 
		Taking the inner product with $(\u_k-\u)(\cdot)$ to the first equation in \eqref{C1}, we obtain
		\begin{align}\label{C2}
			&\frac{1}{2}\frac{\d}{\d t}\|(\u_k-\u)(t)\|_{\H}^2+\mu \|(\u_k-\u)(t)\|_{\V}^2+\alpha\|(\u_k-\u)(t)\|_{\H}^2\nonumber\\&\quad=((\f_k-\f)g(t),(\u_k-\u)(t))-\left(((\u_k-\u)(t) \cdot \nabla)\u(t),(\u_k-\u)(t) \right)\nonumber\\&\qquad-\beta \left(|\u_k(t)|^{r-1}\u_k(t)-|\u(t)|^{r-1}\u(t),(\u_k-\u)(t) \right) .
		\end{align} 
		\vskip 0.2 cm 
		\noindent\textbf{Case I:} \emph{$d=2,3$, $r>3$.} 
		A calculation similar to \eqref{3e} gives
		\begin{align}\label{C3}
			&\beta\left(|\u_k|^{r-1}\u_k-|\u|^{r-1}\u,\u_k-\u\right)\geq \frac{\beta}{2}\||\u_k|^{\frac{r-1}{2}}(\u_k-\u)\|_{\H}^2+\frac{\beta}{2}\||\u|^{\frac{r-1}{2}}(\u_k-\u)\|_{\H}^2,
		\end{align}
		for $r\geq 1$. An estimate similar to \eqref{3j} yields
		\begin{align}\label{C4}
			&|\left(((\u_k-\u) \cdot \nabla)\u,\u_k-\u \right)|=|\left(((\u_k-\u) \cdot \nabla)(\u_k-\u),\u\right)|\nonumber\\&\quad\leq\frac{\mu }{2}\|\u_k-\u\|_{\V}^2+\frac{\beta}{4}\||\u|^{\frac{r-1}{2}}(\u_k-\u)\|_{\H}^2+\frac{r-3}{2\mu(r-1)}\left(\frac{4}{\beta\mu (r-1)}\right)^{\frac{2}{r-3}}\|\u_k-\u\|_{\H}^2,
		\end{align}
		for $r>3$. Combining \eqref{C3} and \eqref{C4}, we obtain
		\begin{align}\label{C5}
			&\beta\left(|\u_k|^{r-1}\u_k-|\u|^{r-1}\u,\u_k-\u \right) +\left(((\u_k-\u) \cdot \nabla)(\u_k-\u),\u\right)\nonumber\\&\quad\geq \frac{\beta}{2}\||\u_k|^{\frac{r-1}{2}}(\u_k-\u)\|_{\H}^2+\frac{\beta}{4}\||\u|^{\frac{r-1}{2}}(\u_k-\u)\|_{\H}^2\nonumber\\&\qquad-\frac{r-3}{2\mu(r-1)}\left(\frac{4}{\beta\mu (r-1)}\right)^{\frac{4}{r-3}}\|\u_k-\u\|_{\H}^2-\frac{\mu }{2}\|\u_k-\u\|_{\V}^2.
		\end{align}
		It is important to note that
		\begin{align*}
			\|\u_k-\u\|_{\widetilde{\L}^{r+1}}^{r+1}&=\int_{\Omega}|\u_k(x)-\u(x)|^{r-1} |\u_k(x)-\u(x)|^{2} \d x \nonumber\\&\leq 2^{r-2} \int_{\Omega}\left(|\u_k(x)|^{r-1}+|\u(x)|^{r-1}\right) |\u_k(x)-\u(x)|^{2} \d x
			\nonumber\\& \leq  2^{r-2}\left( \||\u_k|^{\frac{r-1}{2}}(\u_k-\u)\|_{\H}^2+\||\u|^{\frac{r-1}{2}}(\u_k-\u)\|_{\H}^2\right).
		\end{align*}
		From the above inequality, we have
		\begin{align}\label{C6}
			\frac{2^{2-r}\beta}{4}\|\u_k-\u\|_{\widetilde{\L}^{r+1}}^{r+1}\leq \frac{\beta}{4}\||\u_k|^{\frac{r-1}{2}}(\u_k-\u)\|_{\H}^2+\frac{\beta}{4}\||\u|^{\frac{r-1}{2}}(\u_k-\u)\|_{\H}^2.
		\end{align}
		Thus, from \eqref{C5}, one can easily deduce that
		\begin{align}\label{C61}
			&\beta\left(|\u_k|^{r-1}\u_k-|\u|^{r-1}\u,\u_k-\u\right) +\left(((\u_k-\u) \cdot \nabla)(\u_k-\u),\u\right)\nonumber\\&\quad\geq \frac{\beta}{2^r}\|\u_k-\u\|_{\widetilde{\L}^{r+1}}^{r+1}-\frac{r-3}{2\mu(r-1)}\left(\frac{4}{\beta\mu (r-1)}\right)^{\frac{2}{r-3}}\|\u_k-\u\|_{\H}^2-\frac{\mu }{2}\|\u_k-\u\|_{\V}^2.
		\end{align}
		Using the Cauchy-Schwarz and Young's inequalities, we get
		\begin{align}\label{C62}
			\left((\f_k-\f)g,\u_k-\u\right)\leq \frac{1}{2\alpha}\|\f_k-\f\|_{\L^2}^2\|g\|_{\mathrm{L}^{\infty}}^2+\frac{\alpha}{2}\|\u_k-\u\|_{\H}^2.
		\end{align}
		Substituting \eqref{C61} and \eqref{C62} in \eqref{C2}, and then integrating from $0$ to $t$, we find
		\begin{align}\label{C7}
			&\|(\u_k-\u)(t)\|_{\H}^2+\mu \int_0^t \|(\u_k-\u)(s)\|_{\V}^2\d s\nonumber\\&\qquad+\alpha \int_0^t \|(\u_k-\u)(s)\|_{\H}^2\d s+\frac{\beta}{2^{r-1}}\int_0^t\|(\u_k-\u)(s)\|_{\widetilde{\L}^{r+1}}^{r+1}\d s\nonumber\\&\quad\leq \frac{T}{\alpha}\|\f_k-\f\|_{\L^2}^2\|g\|_0^2+\frac{r-3}{\mu(r-1)}\left(\frac{4}{\beta\mu (r-1)}\right)^{\frac{2}{r-3}} \int_0^t\|(\u_k-\u)(s)\|_{\H}^2\d s,
		\end{align}
		for all $t \in [0,T]$.
		Applying Gronwall's inequality in \eqref{C7}, and then taking supremum on both sides  over time from $0$ to $T$, we obtain
		\begin{align}\label{C8}
			&\sup_{t\in[0,T]}\|(\u_k-\u)(t)\|_{\H}^2+\mu \int_0^T \|(\u_k-\u)(t)\|_{\V}^2\d t \nonumber\\&\qquad+\alpha \int_0^t \|(\u_k-\u)(s)\|_{\H}^2\d t +\frac{\beta}{2^{r-1}}\int_0^T\|(\u_k-\u)(t)\|_{\widetilde{\L}^{r+1}}^{r+1}\d t\nonumber\\&\quad\leq \left(\frac{T}{\alpha}\|\f_k-\f\|_{\L^2}^2\|g\|_0^2 \right) \exp \left\{\frac{2(r-3)}{\mu(r-1)}\left(\frac{4}{\beta\mu (r-1)}\right)^{\frac{2}{r-3}}T\right\}.
		\end{align}
		Since the norm $\|\f_k-\f\|_{\L^2} \rightarrow 0 \ \ \text{as} \ \ k \rightarrow \infty $, from \eqref{C8}, it  follows that
		\begin{align}\label{cg}
			&	\sup_{t\in[0,T]}\|(\u_k-\u)(t)\|_{\H}^2+\mu \int_0^T \|(\u_k-\u)(t)\|_{\V}^2\d t+\frac{\beta}{2^{r-1}}\int_0^T\|(\u_k-\u)(t)\|_{\widetilde{\L}^{r+1}}^{r+1}\d t \nonumber\\& \rightarrow 0 \ \  \text{as} \ \ k \rightarrow \infty,
		\end{align}
		for $d=2,3$  and $r>3$.
		\vskip 0.2 cm 
		\noindent\textbf{Case II:} \emph{$d=r=3$.} 
		From \eqref{C3}, we find
		\begin{align}\label{C9}
			\beta\left(|\u_k|^2\u_k-|\u|^2\u,\u_k-\u\right)\geq \frac{\beta}{2}\||\u_k|(\u_k-\u)\|_{\H}^2+\frac{\beta}{2}\||\u|(\u_k-\u)\|_{\H}^2.
		\end{align}
		\begin{align}\label{C10}
			|\left(((\u_k-\u) \cdot \nabla)(\u_k-\u),\u \right)|&\leq \|\u_k-\u\|_{\V}\||\u|(\u_k-\u)\|_{\H} \nonumber\\&\leq \mu \theta \|\u_k-\u\|_{\V}^2+\frac{1}{4\mu \theta}\||\u|(\u_k-\u)\|_{\H}^2,
		\end{align}
		for  $0<\theta<1$,	where we have used the Cauchy-Schwarz and Young's inequalities. Combining \eqref{C9} and \eqref{C10}, we obtain
		\begin{align}\label{C11}
			\nonumber&\beta\left(|\u_k|^2\u_k-|\u|^2\u,\u_k-\u\right)+\left(((\u_k-\u) \cdot \nabla)(\u_k-\u),\u \right)\nonumber\\&\quad\geq \frac{\beta}{2}\||\u_k|(\u_k-\u)\|_{\H}^2+\left(\frac{\beta}{2}-\frac{1}{4\mu \theta }\right)\||\u|(\u_k-\u)\|_{\H}^2- \mu\theta\|(\u_k-\u)\|_{\V}^2\nonumber\\&\quad\geq \frac{1}{2}\left(\beta-\frac{1}{2\mu \theta}\right)\|\u_k-\u\|_{\widetilde{\L}^{4}}^4-\mu\theta\|\u_k-\u\|_{\V}^2.
		\end{align}
		Using the Cauchy-Schwarz and Young's inequalities, we get
		\begin{align}\label{C63}
			\left((\f_k-\f)g,\u_k-\u\right)\leq \frac{1}{2\alpha}\|\f_k-\f\|_{\L^2}^2\|g\|_{\mathrm{L}^{\infty}}^2+\frac{\alpha}{2}\|\u_k-\u\|_{\H}^2.
		\end{align}
		Substituting \eqref{C11} and \eqref{C63} in \eqref{C2}, we deduce that
		\begin{align*}
			&\|(\u_k-\u)(t)\|_{\H}^2+2\mu (1-\theta)\int_0^t \|(\u_k-\u)(s)\|_{\V}^2\d  s+\alpha\int_0^t \|(\u_k-\u)(s)\|_{\H}^2\d  s\nonumber\\&\quad+\left(\beta-\frac{1}{2\mu \theta}\right)\int_0^t\|(\u_k-\u)(s)\|_{\widetilde{\L}^{4}}^{4}\d s\leq \frac{T}{\alpha}\|\f_k-\f\|_{\L^2}^2\|g\|_0^2.
		\end{align*}
		For $2\beta \mu \geq1$ and convergence of the norm $\|\f_k-\f\|_{\L^2}  \rightarrow 0 \ \ \text{as} \ \ k \rightarrow \infty $, it is immediate that
		\begin{align*}
			&\sup_{t\in[0,T]}\|(\u_k-\u)(t)\|_{\H}^2+\mu(1-\theta) \int_0^T \|(\u_k-\u)(t)\|_{\V}^2\d  t+\left(\beta-\frac{1}{2\mu \theta }\right)\int_0^T\|(\u_k-\u)(t)\|_{\widetilde{\L}^{4}}^{4}\d t  \\& \rightarrow 0 \ \ \text{as} \ \  k \rightarrow \infty.
		\end{align*}
		\vskip 0.2 cm 
		\noindent\textbf{Case III:} \emph{$d=2$, $r\in[1,3]$.} Combining \eqref{C3} and \eqref{C6} and substituting it in \eqref{C2}, we obtain
		\begin{align*}
			&\frac{1}{2}\frac{\d}{\d t}\|(\u_k-\u)(t)\|_{\H}^2+\mu \|(\u_k-\u)(t)\|_{\V}^2+\alpha \|(\u_k-\u)(t)\|_{\H}^2+\frac{\beta}{2^{r}}\|(\u_k-\u)(t)\|_{\widetilde{\L}^{r+1}}^{r+1}\\&\leq\|\f_k-\f\|_{\L^2}\|g\|_{\mathrm{L}^{\infty}}\|(\u_k-\u)(t)\|_{\H} +\|(\u_k-\u)(t)\|_{\widetilde{\L}^{4}}^{2} \|\u(t)\|_{\V}\\&\leq  \|\f_k-\f\|_{\L^2}\|g\|_{\mathrm{L}^{\infty}}\|(\u_k-\u)(t)\|_{\H} +\sqrt{2}\|(\u_k-\u)(t)\|_{\H}\|(\u_k-\u)(t)\|_{\V} \|\u(t)\|_{\V} \\&\quad\leq \frac{1}{2 \alpha}\|\f_k-\f\|_{\L^2}^2\|g\|_{\mathrm{L}^{\infty}}^2 +\frac{\alpha}{2}\|(\u_k-\u)(t)\|_{\H}^2+\frac{\mu}{2}\|(\u_k-\u)(t)\|_{\V}^2+\frac{1}{\mu} \|(\u_k-\u)(t)\|_{\H}^2\|\u(t)\|_{\V}^2,
		\end{align*} 
		where we have used H\"older's, Ladyzhenskaya's and Young's inequalities. Integrating the above inequality from $0$ to $t$, we find
		\begin{align}\label{C12}
			&\|(\u_k-\u)(t)\|_{\H}^2+\mu \int_0^t \|(\u_k-\u)(s)\|_{\V}^2\d s\nonumber\\&\qquad+\alpha\int_0^t \|(\u_k-\u)(s)\|_{\H}^2\d s +\frac{\beta}{2^{r-1}}\int_0^t\|(\u_k-\u)(s)\|_{\widetilde{\L}^{r+1}}^{r+1}\d s  \nonumber\\&\quad\leq \frac{T}{\alpha}\|\f_k-\f\|_{\L^2}^2\|g\|_0^2+\frac{2}{\mu} \int_0^t\|(\u_k-\u)(s)\|_{\H}^2 \|\u(s)\|_{\V}^2\d s,
		\end{align}
		for all $t \in [0,T]$. An application of Gronwall's inequality in \eqref{C12}, followed by taking supremum on both side over time from $0$ to $T$, we obtain
		\begin{align*}
			&\sup_{t\in[0,T]}\|(\u_k-\u)(t)\|_{\H}^2+\mu \int_0^T \|(\u_k-\u)(t)\|_{\V}^2\d  t\\&\qquad+\alpha\int_0^T \|(\u_k-\u)(t)\|_{\H}^2\d t+\frac{\beta}{2^{r-1}}\int_0^T\|(\u_k-\u)(t)\|_{\widetilde{\L}^{r+1}}^{r+1}\d t\\&\quad\leq \left(\frac{T}{ \alpha}\|\f_k-\f\|_{\L^2}^2\|g\|_0^2\right) \exp \left(\frac{2}{\mu}\int_0^T\|\u(t)\|_{\V}^2\d t \right), 
		\end{align*}
		and \eqref{cg} follows because the sequence $\{\f_k\}_{k=1}^{\infty}$ tends to $\f$ in the $\L^2$-norm.\
		\vskip 0.2 cm 
		\noindent\textbf{Step II:} \emph{$(\u_k)_t\to\u_t$ in $\mathrm{L}^2(0,T;\H)$.}
		From \eqref{F_k}, we rewrite $\boldsymbol{F}_k$ as
		\begin{align*}
			\boldsymbol{F}_k=(\f_k-\f)g-((\u_k-\u) \cdot \nabla)\u-(\u_k \cdot \nabla)(\u_k-\u)-\beta(|\u_k|^{r-1}\u_k-|\u|^{r-1}\u).
		\end{align*}
		Note that 
		\begin{align}\label{FC}
			\|\boldsymbol{F}_k\|_{\mathrm{L}^{2}(0,T;\H)}&\leq\|\f_k-\f\|_{\L^2}\|g\|_0+\|((\u_k-\u) \cdot \nabla)\u\|_{\mathrm{L}^{2}(0,T;\H)}\nonumber\\&\quad+\|(\u_k \cdot \nabla)(\u_k-\u)\|_{\mathrm{L}^{2}(0,T;\H)}+\beta\||\u_k|^{r-1}\u_k-|\u|^{r-1}\u\|_{\mathrm{L}^{2}(0,T;\H)}.
		\end{align}
		Next, we show that $\|\boldsymbol{F}_k\|_{\mathrm{L}^{2}(0,T;\H)} \rightarrow 0 \ \  \text{as} \ \ k \rightarrow \infty$. For this, we need  to show that each individual term in the right hand side of \eqref{FC} tends to zero as $k \rightarrow \infty$.
		It can be easily seen that
		\begin{align*}
			\|\f_k-\f\|_{\L^2}\|g\|_0 \rightarrow 0 \ \  \text{as} \ \ k \rightarrow \infty.
		\end{align*}
		Using H\"older's inequality, we get 
		\begin{align}\label{C13}
			&\|(\u_k \cdot \nabla)(\u_k-\u)\|_{\mathrm{L}^{2}(0,T;\H)}^2\nonumber=\int_0^T\int_{\Omega}|\u_k|^2|\nabla (\u_k-\u)||\nabla (\u_k-\u)| \d x \d t \nonumber\\&\quad\leq \left(\int_0^T\int_{\Omega}|\nabla (\u_k-\u)|^2 \d x \d t\right)^\frac{1}{2}\left(\int_0^T\int_{\Omega}|\u_k|^4|\nabla (\u_k-\u)|^2 \d x \d t \right)^\frac{1}{2}
			\nonumber\\&\quad\leq \|\u_k-\u\|_{\mathrm{L}^{2}(0,T;\V)}\left(\int_0^T\|\u_k(t)\|_{\widetilde{\L}^{\infty}}^4\| (\u_k-\u)(t)\|_{\V}^2 \d t \right)^\frac{1}{2}. 
		\end{align}
		For $d=3$, by using Agmon's inequality and energy estimates, it is easy to deduce that
		\begin{align*}
			&\left(\int_0^T\|\u_k(t)\|_{\widetilde{\L}^{\infty}}^4\| (\u_k-\u)(t)\|_{\V}^2 \d t \right)^\frac{1}{2} \\&\quad\leq C \sup_{t\in[0,T]}\left(\|\u_k(t)\|_{\V} +\|\u(t)\|_{\V}\right)\left(\int_0^T\|\u_k(t)\|_{\V}^2\| \u_k(t)\|_{\H^2(\Omega) \cap \V}^2 \d t \right)^\frac{1}{2} \\&\quad\leq C T^\frac{1}{2} \sup_{t\in[0,T]}\|\u_k(t)\|_{\V} \sup_{t\in[0,T]}\|\u_k(t)\|_{\H^2(\Omega) \cap \V} < +\infty.
		\end{align*}
		Similarly, for $d=2$, we have
		\begin{align*}
			&\left(\int_0^T\|\u_k(t)\|_{\widetilde{\L}^{\infty}}^4\| (\u_k-\u)(t)\|_{\H}^2 \d t \right)^\frac{1}{2} \\&\quad\leq C \sup_{t\in[0,T]}\left(\|\u_k(t)\|_{\V} +\|\u(t)\|_{\V}\right)\left(\int_0^T\|\u_k(t)\|_{\H}^2\| \u_k(t)\|_{\H^2(\Omega) \cap \V}^2 \d t \right)^\frac{1}{2} \\&\quad\leq C T^\frac{1}{2} \sup_{t\in[0,T]}\|\u_k(t)\|_{\H} \sup_{t\in[0,T]}\|\u_k(t)\|_{\H^2(\Omega) \cap \V} < +\infty.
		\end{align*}
		The above estimates and the convergence of the norm $\|\u_k-\u\|_{\mathrm{L}^{2}(0,T;\V)} \rightarrow 0 \ \ \text{as} \ k \rightarrow \infty$ imply that the norm $\|(\u_k \cdot \nabla)(\u_k-\u)\|_{\mathrm{L}^{2}(0,T;\H)} \rightarrow 0 \ \ \text{as} \ k \rightarrow \infty.$ Once again an application of H\"older's inequality yields 
		\begin{align}\label{C14}
			&\|((\u_k-\u) \cdot \nabla)\u\|_{\mathrm{L}^{2}(0,T;\H)}^2=\int_0^T\int_{\Omega}|\nabla \u|^2|\u_k-\u|| \u_k-\u| \d x \d t \nonumber\\&\qquad\leq \left(\int_0^T\int_{\Omega}| \u_k-\u|^2 \d x \d t\right)^\frac{1}{2}\left(\int_0^T\int_{\Omega}|\nabla\u|^4|\u_k-\u|^2 \d x \d t \right)^\frac{1}{2}
			\nonumber\\&\qquad\leq T^\frac{1}{2}\sup_{t\in[0,T]}\|(\u_k-\u)(t)\|_{\H} \left(\int_0^T\|(\u_k-\u)(t)\|_{\widetilde{\L}^{\infty}}^2\| \u(t)\|_{\V}^4 \d t \right)^\frac{1}{2}. 
		\end{align}
		An application of Agmon's inequality and the energy estimates given in Lemmas \ref{lemma1}-\ref{lem2.5}  imply that $\int_0^T\|(\u_k-\u)(t)\|_{\widetilde{\L}^{\infty}}^2\| \u(t)\|_{\V}^4 \d t<+\infty$, and  the convergence of the norm $\|\u_k-\u\|_{\mathrm{L}^{\infty}(0,T;\H)} \rightarrow 0 \ \ \text{as} \ \ k \rightarrow \infty$ easily gives $\|((\u_k-\u) \cdot \nabla)\u\|_{\mathrm{L}^{2}(0,T;\H)} \rightarrow 0 \ \ \text{as} \ \ k \rightarrow \infty.$
		Let us define $h(\u):=|\u|^{r-1}\u$. Then,	using Taylor's formula (Theorem 7.9.1, \cite{PGC}), we obtain (cf. \cite{MTM6})
		\begin{align}\label{C141}
			&\||\u_k|^{r-1}\u_k-|\u|^{r-1}\u\|_{\mathrm{L}^{2}(0,T;\H)}^2=\int_0^T\||\u_k(t)|^{r-1}\u_k(t)-|\u(t)|^{r-1}\u(t)\|_{\H}^2 \d t\nonumber\\&\qquad=\int_0^T\left\|\int_0^1 h'(\theta \u_k+(1-\theta)\u)\d \theta(\u_k(t)-\u(t))\right\|_{\H}^2 \d t 
			\nonumber\\&\qquad\leq r^2\int_0^T\left(\|\u_k(t)\|_{\widetilde{\L}^\infty}+\|\u(t)\|_{\widetilde{\L}^\infty}\right)^{2(r-1)}\|\u_k(t)-\u(t)\|_{\H}^2\d t \nonumber\\&\qquad\leq r^2 \sup_{t\in[0,T]}\|\u_k(t)-\u(t)\|_{\H}^2\int_0^T\left(\|\u_k(t)\|_{\widetilde{\L}^\infty}+\|\u(t)\|_{\widetilde{\L}^\infty}\right)^{2(r-1)}\d t,
		\end{align}
		for $r \geq 1$. 
		Once again applying   Agmon's inequality and the energy estimates in Lemmas \ref{lemma1}-\ref{lem2.5}, we get  $\int_0^T\left(\|\u_k(t)\|_{\widetilde{\L}^\infty}+\|\u(t)\|_{\widetilde{\L}^\infty}\right)^{2(r-1)}\d t<+\infty$  and then using the convergence of the norm $\|\u_k-\u\|_{\mathrm{L}^\infty(0,T;\H)} \rightarrow 0 \ \ \text{as} \ \ k \rightarrow \infty$, we deduce $$\||\u_k|^{r-1}\u_k-|\u|^{r-1}\u\|_{\mathrm{L}^{2}(0,T;\H)} \rightarrow 0 \ \ \text{as} \ k \rightarrow \infty.$$
		Hence, one can easily conclude that
		\begin{align}\label{F0}
			\|\boldsymbol{F}_k\|_{\mathrm{L}^{2}(0,T;\H)} \rightarrow 0 \ \  \text{as} \ \ k \rightarrow \infty.
		\end{align}
		Taking the inner product with $(\u_k-\u)_t(\cdot)$ to the first equation in \eqref{C1}, we find
		\begin{align*}
			&\|(\u_k-\u)_t\|_{\H}^2+\frac{\mu}{2}\frac{\d}{\d t}\|(\u_k-\u)(t)\|_{\V}^2+\frac{\alpha}{2}\frac{\d}{\d t}\|(\u_k-\u)(t)\|_{\H}^2 \\&\quad\leq \|\boldsymbol{F}_k(t)\|_{\H}\|(\u_k-\u)_t(t)\|_{\H}
			\\&\quad\leq \frac{1}{2} \|\boldsymbol{F}_k(t)\|_{\H}^2+\frac{1}{2}\|(\u_k-\u)_t(t)\|_{\H}^2,
		\end{align*}
		where we have used the Cauchy-Schwarz and Young's inequalities. Integrating the above inequality from $0$ to $t$, we obtain
		\begin{align*}
			\|(\u_k-\u)_t\|_{\mathrm{L}^{2}(0,T;\H)} \leq  \|\boldsymbol{F}_k\|_{\mathrm{L}^{2}(0,T;\H)},
		\end{align*}
		since the initial condition is homogeneous. Whence by virtue of \eqref{F0}, we obtain
		\begin{align}\label{C16}
			\|(\u_k-\u)_t\|_{\mathrm{L}^{2}(0,T;\H)} \rightarrow 0 \ \  \text{as} \ \ k \rightarrow \infty.
		\end{align}
		\vskip 0.2cm
		\noindent\textbf{Step III:} \emph{The operator $\mathcal{A}$ is continuous on $\mathcal{D}$.}
		Differentiating $\boldsymbol{F}_{k}$ (see \eqref{F_k}) with respect to time $t$, we get
		\begin{align*}
			\boldsymbol{F}_{kt}&=(\f_k-\f)g_t-(\u_{kt} \cdot \nabla)\u_k-(\u_k \cdot \nabla)\u_{kt}+(\u_t\cdot \nabla)\u\\&\quad+(\u\cdot \nabla)\u_t-\beta r(|\u_k|^{r-1}\u_{kt}-|\u|^{r-1}\u_t).
		\end{align*}
		\begin{align}\label{FB}
			\|\boldsymbol{F}_{kt}\|_{\mathrm{L}^{2}(0,T;\H)} \nonumber&\leq \|(\f_k-\f)g_t\|_{\mathrm{L}^{2}(0,T;\H)}+\|(\u_{kt} \cdot \nabla)\u_k\|_{\mathrm{L}^{2}(0,T;\H)}+\|(\u_k \cdot \nabla)\u_{kt}\|_{\mathrm{L}^{2}(0,T;\H)}\nonumber\\&\quad+\|(\u_t\cdot \nabla)\u\|_{\mathrm{L}^{2}(0,T;\H)}+\|(\u\cdot \nabla)\u_t\|_{\mathrm{L}^{2}(0,T;\H)}\nonumber\\&\quad+\beta r\||\u_k|^{r-1}\u_{kt}\|_{\mathrm{L}^{2}(0,T;\H)}+\beta r\||\u|^{r-1}\u_t\|_{\mathrm{L}^{2}(0,T;\H)}.
		\end{align}
		Next, we show that the norm $\|\boldsymbol{F}_{kt}\|_{\mathrm{L}^{2}(0,T;\H)}$ is bounded as $ k \rightarrow \infty$. In order to do this, we bound each individual term in the right hand side of \eqref{FB} as $ k \rightarrow \infty$.	It is easy to see that
		\begin{align}\label{C17}
			\|(\f_k-\f)g_t\|_{\mathrm{L}^{2}(0,T;\H)}\leq \|\f_k-\f\|_{\L^2}\|g_t\|_0 \rightarrow 0 \ \  \text{as} \ \ k \rightarrow \infty.
		\end{align}
		For $d=3$, applying H\"older's, Gagliardo-Nirenberg's and Agmon's inequalities, and then using energy estimates, we arrive at
		\begin{align*}
			&\|(\u_t\cdot \nabla)\u\|_{\mathrm{L}^{2}(0,T;\H)}^2+\|(\u\cdot \nabla)\u_t\|_{\mathrm{L}^{2}(0,T;\H)}^2\\&\quad\leq \int_0^T\|\u_t(t)\|_{\widetilde{\L}^4}^2\|\nabla\u(t)\|_{\widetilde{\L}^4}^2 \d t+\int_0^T\|\u(t)\|_{\widetilde{\L}^\infty}^2\|\nabla\u_t(t)\|_{\H}^2 \d t \\&\quad\leq C \int_0^T\|\u_t(t)\|_{\H}^\frac{1}{2}\|\u_t(t)\|_{\V}^\frac{3}{2}\|\u(t)\|_{\H}^\frac{1}{4}\|\u(t)\|_{\H^2(\Omega) \cap \V}^\frac{7}{4} \d t\\&\qquad+C\int_0^T\|\u(t)\|_{\V}\|\u(t)\|_{\H^2(\Omega) \cap \V}\|\u_t(t)\|_{\V}^2 \d t \\&\quad\leq C \sup_{t\in[0,T]}\|\u_t(t)\|_{\H}^\frac{1}{2}\sup_{t\in[0,T]}\|\u(t)\|_{\H}^\frac{1}{4}\sup_{t\in[0,T]}\|\u(t)\|_{\H^2(\Omega) \cap \V}^\frac{7}{4} \left(\int_0^T \|\u_t(t)\|_{\V}^\frac{3}{2}\d t\right)\\&\qquad+C\sup_{t\in[0,T]}\|\u(t)\|_{\V}\sup_{t\in[0,T]}\|\u(t)\|_{\H^2(\Omega) \cap \V}\left(\int_0^T\|\u_t(t)\|_{\V}^2 \d t\right) \\&\quad\leq C T^\frac{1}{4} \left(\int_0^T \|\u_t(t)\|_{\V}^{2}\d t\right)^\frac{3}{4}+C\left(\int_0^T\|\u_t(t)\|_{\V}^2 \d t\right)< +\infty. 
		\end{align*}
		Similarly, for $d=2$, we have
		\begin{align*}
			&\|(\u_t\cdot \nabla)\u\|_{\mathrm{L}^{2}(0,T;\H)}^2+\|(\u\cdot \nabla)\u_t\|_{\mathrm{L}^{2}(0,T;\H)}^2\\&\quad\leq \int_0^T\|\u_t(t)\|_{\widetilde{\L}^4}^2\|\nabla\u(t)\|_{\widetilde{\L}^4}^2 \d t+\int_0^T\|\u(t)\|_{\widetilde{\L}^\infty}^2\|\nabla\u_t(t)\|_{\H}^2 \d t \\&\quad\leq C \int_0^T\|\u_t(t)\|_{\H}\|\u_t(t)\|_{\V}\|\u(t)\|_{\H}^\frac{1}{2}\|\u(t)\|_{\H^2(\Omega) \cap \V}^\frac{3}{2} \d t\\&\qquad+C\int_0^T\|\u(t)\|_{\H}\|\u(t)\|_{\H^2(\Omega) \cap \V}\|\u_t(t)\|_{\V}^2 \d t \\&\quad\leq C \sup_{t\in[0,T]}\|\u_t(t)\|_{\H}\sup_{t\in[0,T]}\|\u(t)\|_{\H}^\frac{1}{2}\sup_{t\in[0,T]}\|\u(t)\|_{\H^2(\Omega) \cap \V}^\frac{3}{2} \left(\int_0^T \|\u_t(t)\|_{\V}\d t\right)\\&\qquad+C\sup_{t\in[0,T]}\|\u(t)\|_{\H}\sup_{t\in[0,T]}\|\u(t)\|_{\H^2(\Omega) \cap \V}\left(\int_0^T\|\u_t(t)\|_{\V}^2 \d t\right) \\&\quad\leq C T^\frac{1}{2}\left(\int_0^T \|\u_t(t)\|_{\V}^{2}\d t\right)^\frac{1}{2}+C\left(\int_0^T\|\u_t(t)\|_{\V}^2 \d t\right)< +\infty. 
		\end{align*}
		Hence, we have 
		\begin{align}\label{C18}
			\|(\u_t\cdot \nabla)\u\|_{\mathrm{L}^{2}(0,T;\H)}+\|(\u\cdot \nabla)\u_t\|_{\mathrm{L}^{2}(0,T;\H)} < +\infty.
		\end{align}
		Using similar arguments, it can be easily deduced that
		\begin{align}\label{C19}
			\|(\u_{kt}\cdot \nabla)\u_k\|_{\mathrm{L}^{2}(0,T;\H)}+\|(\u_k\cdot \nabla)\u_{kt}\|_{\mathrm{L}^{2}(0,T;\H)} < +\infty.
		\end{align}
		Now, we estimate the norm $\||\u|^{r-1}\u_t\|_{\mathrm{L}^{2}(0,T;\H)}$ as
		\begin{align}\label{C20}
			&\||\u|^{r-1}\u_t\|_{\mathrm{L}^{2}(0,T;\H)}^2=\int_0^T\int_{\Omega}|\u|^{2(r-1)}|\u_t|^2\d x \d t \nonumber\\&\quad\leq \int_0^T\|\u_t(t)\|_{\H}^2 \|\u(t)\|_{\widetilde{\L}^\infty}^{2(r-1)} \d t \leq \sup_{t\in[0,T]}\|\u_t(t)\|_{\H}^2 \int_0^T \|\u(t)\|_{\widetilde{\L}^\infty}^{2(r-1)} \d t.
		\end{align}
		An application of Agmon's inequality and the energy estimates given in Lemmas \ref{lemma1}-\ref{lem2.5}, imply that  $\int_0^T\|\u(t)\|_{\widetilde{\L}^\infty}^{2(r-1)}\d t<+\infty$. Hence, from \eqref{C20}, we get 
		\begin{align}\label{C21}
			\||\u|^{r-1}\u_t\|_{\mathrm{L}^{2}(0,T;\H)} <+\infty.
		\end{align}
		Using similar arguments, we obtain
		\begin{align}\label{C22}
			\||\u_k|^{r-1}\u_{kt}\|_{\mathrm{L}^{2}(0,T;\H)} <+\infty.
		\end{align}
		Thus, we infer that the norm $\|\boldsymbol{F}_{kt}\|_{\mathrm{L}^{2}(0,T;\H)}$ is bounded as $k \rightarrow \infty$.
		
		Differentiating \eqref{C1} with respect to $t$ and then taking the inner product with $(\u_k-\u)_t(\cdot)$, we find
		\begin{align*}
			\frac{1}{2}\frac{\d}{\d t}\|(\u_k-\u)_t(t)\|_{\H}^2+\mu\|(\u_k-\u)_t(t)\|_{\V}^2+\alpha\|(\u_k-\u)_t(t)\|_{\H}^2=\left(\boldsymbol{F}_{kt}(t),(\u_k-\u)_t(t)\right).
		\end{align*}
		An application of H\"older's inequality in the above equation gives
		\begin{align*}
			\frac{1}{2}\int_{\epsilon}^T\frac{\d}{\d t}\|(\u_k-\u)_t(t)\|_{\H}^2 \d t \leq\int_{\epsilon}^T\|\boldsymbol{F}_{kt}(t)\|_{\H}\|(\u_k-\u)_t(t)\|_{\H}\d t, \ \ \ 0<\epsilon\leq T,
		\end{align*} 
		yielding
		\begin{align*}
			\frac{1}{2}\|(\u_k-\u)_t(\cdot,T)\|_{\H}^2  \leq \frac{1}{2}\|(\u_k-\u)_t(\cdot,\epsilon)\|_{\H}^2+\|\boldsymbol{F}_{kt}\|_{\mathrm{L}^{2}(0,T;\H)}\|(\u_k-\u)_t\|_{\mathrm{L}^{2}(0,T;\H)}.
		\end{align*}
		Since $\|(\u_k-\u)_t(\cdot,t)\|_{\H}$ is continuous on the segment [0,T], we are able to pass the limit $\epsilon \rightarrow 0$ in the above inequality, and, as a final result, we get the estimate
		\begin{align}\label{ad}
			\|(\u_k-\u)_t(\cdot,T)\|_{\H}^2  \leq \|\boldsymbol{F}_{kt}\|_{\mathrm{L}^{2}(0,T;\H)}\|(\u_k-\u)_t\|_{\mathrm{L}^{2}(0,T;\H)},
		\end{align}
		since the initial condition is homogeneous. It was proved earlier that the norm 
		$\|\boldsymbol{F}_{kt}\|_{\mathrm{L}^{2}(0,T;\H)}$ is bounded as $k \rightarrow \infty$. Consequently, using the convergence of the norm $\|(\u_k-\u)_t\|_{\mathrm{L}^{2}(0,T;\H)} \rightarrow 0$ as $k \rightarrow \infty$ (see \eqref{C16}) and the definition of the operator $\mathcal{A}$ in \eqref{ad}, we establish
		\begin{align*}
			\|\mathcal{A}\f_k-\mathcal{A}\f\|_{\H}=\|(\u_k-\u)_t(\cdot,T)\|_{\H} \rightarrow 0 \ \ \text{as} \ \ k \rightarrow \infty,
		\end{align*}
		and so the nonlinear operator $\mathcal{A}$ is continuous on $\mathcal{D}$.\
		\vskip 0.2cm
		\noindent\textbf{Step IV:} \emph{The operator $\mathcal{A}$ is completely continuous.} 
		Let us now show that any bounded subset of the set $\mathcal{D}$ is carried by the operator $\mathcal{A}$ into a compact set in the space $\L^2(\Omega)$. We shall write system \eqref{1a}-\eqref{1d} in the form
		\begin{align}
			&\u_t-\mu \Delta\u=-\nabla p+\boldsymbol{F}_1, \  \ \ \  \ \  \nabla\cdot\u=0, \ \ \  \ \text{ in } \ \Omega\times(0,T),\label{C23} \\
			&\u=\u_0, \ \ \ \ \text{ in } \ \Omega \times \{0\}, \ \ \ \ \ \ \ \  \ \  \u=\boldsymbol{0},\ \   \text{ on } \ \partial\Omega\times[0,T), \label{C24} 
		\end{align}
		where $$\boldsymbol{F}_1=\f g-(\u\cdot\nabla)\u-\alpha \u-\beta|\u|^{r-1}\u.$$ Differentiating the above system \eqref{C23}-\eqref{C24} with respect to time $t$, we find
		\begin{align}
			&\u_{tt}-\mu \Delta\u_t=-\nabla p_t+\boldsymbol{F}_{1t}, \  \ \  \ \  \nabla\cdot\u_t=0, \ \ \ \  \text{ in } \ \Omega\times(0,T),\label{C25} \\
			&\u_t=\boldsymbol{a}_0, \ \ \ \ \text{ in } \ \Omega \times \{0\},  \ \ \ \ \ \ \ \ \ \ \ \ \u_t=\boldsymbol{0},\ \   \text{ on } \ \partial\Omega\times[0,T), \label{C26} 
		\end{align}
		where $$\boldsymbol{F}_{1t}=\f g_t-\partial_{t}((\u\cdot\nabla)\u+\alpha \u+\beta|\u|^{r-1}\u) \ \ \text{and} \ \ \boldsymbol{a}_0=\P_{\H}(\mu \Delta \u_0+\boldsymbol{F}_1(x,0)).$$ As stated above, the norm $\|\boldsymbol{F}_{kt}\|_{\mathrm{L}^{2}(0,T;\H)}$ is bounded as $k \rightarrow \infty$. Using  similar arguments, we conclude that the norm $\|\boldsymbol{F}_{1t}\|_{\mathrm{L}^{2}(0,T;\H)}$ is also bounded and we have the following estimate:
		\begin{align*}
			\|\boldsymbol{F}_{1t}\|_{\mathrm{L}^{2}(0,T;\H)}^2\leq C(\|\u_0\|_{\H^2(\Omega)\cap \V}^2+ \|\f\|_{\L^2}^2\|g_t\|_0^2+\|\f\|_{\L^2}^2\|g\|_0^2).
		\end{align*}
		Let $\epsilon \in (0,T)$ be an arbitrary fixed number. As long as $\|\u_t(\cdot,t)\|_{\V}$ is continuous on the segment $[\epsilon,T]$ (see Lemma \ref{lemma2.6}), there is a $\tau \in [\epsilon,T]$ such that
		\begin{align}\label{C27}
			\int_{\epsilon}^T\|\u_t(\cdot,t)\|_{\V}^2\d t=(T-\epsilon)\|\u_t(\cdot,\tau)\|_{\V}^2.
		\end{align}
		From the system \eqref{C25}-\eqref{C26}, we deduce that
		\begin{align}\label{C28}
			\int_{\tau}^{T}\int_{\Omega}|\u_{tt}-\mu \P_{\H} \Delta \u_t|^2\d x \d t=\int_{\tau}^{T}\int_{\Omega}| \P_{\H} \boldsymbol{F}_{1t}|^2\d x \d t.
		\end{align}
		Note that 
		\begin{align*}
			2\int_{\tau}^{T}\int_{\Omega}\u_{tt} \cdot \mu \P_{\H} \Delta \u_t\d x \d t=-\mu \int_{\tau}^{T}\frac{\d}{\d t}\|\u_t(\cdot,t)\|_{\V}^2\d t,
		\end{align*}
		Using the above expression	 in \eqref{C28} results in
		\begin{align*}
			\int_{\tau}^{T}\int_{\Omega}\left(|\u_{tt}|^2+|\mu \P_{\H} \Delta \u_t|^2\right)\d x \d t=\int_{\tau}^{T}\int_{\Omega}| \P_{\H} \boldsymbol{F}_{1t}|^2\d x \d t-\mu\|\u_t(\cdot,T)\|_{\V}^2+\mu\|\u_t(\cdot,\tau)\|_{\V}^2,
		\end{align*}
		which implies that 
		\begin{align*}
			\mu\|\u_t(\cdot,T)\|_{\V}^2 \leq\|\boldsymbol{F}_{1t}\|_{\mathrm{L}^{2}(0,T;\H)}^2+\frac{\mu}{T-\epsilon}\|\u_t(\cdot,t)\|_{\mathrm{L}^{2}(0,T;\V)}^2,
		\end{align*}
		where we have used \eqref{C27}. Using the estimate \eqref{E12}, it can be easily seen that
		\begin{align*}
			\|\u_t(\cdot,T)\|_{\V}^2 &\leq \frac{1}{\mu}\|\boldsymbol{F}_{1t}\|_{\mathrm{L}^{2}(0,T;\H)}^2+\frac{C}{T-\epsilon}\left(\|\u_t(0)\|_{\H}^2+\|\f\|_{\L^2}^2\|g_t\|_0^2\right)\\&\leq\frac{C}{T-\epsilon}\left(\| \mu \Delta\u_0-(\u_0 \cdot \nabla)\u_0-\alpha \u_0-\beta|\u_0|^{r-1}\u_0\|_{\H}^2+\sup_{x \in \Omega}|g(x,0)|^2\|\boldsymbol{f}\|_{\L^2}^2\right)\\&\quad+\frac{1}{\mu}\|\boldsymbol{F}_{1t}\|_{\mathrm{L}^{2}(0,T;\H)}^2+\frac{C}{(T-\epsilon)}\|\f\|_{\L^2}^2\|g_t\|_0^2.
		\end{align*}
		As stated above, the norm $\|\boldsymbol{F}_{1t}\|_{\mathrm{L}^{2}(0,T;\H)}$ is bounded. Finally, we obtain the following estimate for the nonlinear operator $\mathcal{A}$
		\begin{align}\label{C30}
			\|\mathcal{A}\f\|_{\V}^2 &\leq\frac{C}{T-\epsilon}\left(\| \mu \Delta\u_0-(\u_0 \cdot \nabla)\u_0-\alpha \u_0-\beta|\u_0|^{r-1}\u_0\|_{\H}^2+\sup_{x \in \Omega}|g(x,0)|^2\|\boldsymbol{f}\|_{\L^2}^2\right)\nonumber\\&\quad+C(\|\u_0\|_{\H^2(\Omega)\cap \V}^2+ \|\f\|_{\L^2}^2\|g_t\|_0^2+\|\f\|_{\L^2}^2\|g\|_0^2)+\frac{C}{(T-\epsilon)}\|\f\|_{\L^2}^2\|g_t\|_0^2. 
		\end{align}
		
		Let $\mathcal{D}_1$ be an arbitrary bounded subset of the set $\mathcal{D}$. Remember that $$\mathcal{D}=\{\boldsymbol{f} \in \L^2(\Omega) \ : \ \|\f\|_{\L^2} \leq 1\}$$ is referred to as the range of the operator $\mathcal{A}$. From estimate \eqref{C30}, it follows that the nonlinear operator $\mathcal{A}$, takes $\L^2(\Omega)$ into $\V$ and maps $\mathcal{D}_1$ into a certain set  $\widetilde{\mathcal{D}}_1$, bounded in the space $\V$. On account of  Rellich's compactness theorem, the set $\widetilde{\mathcal{D}}_1$ is compact in the space $\L^2(\Omega)$, and therefore the operator $\mathcal{A}$ is continuous on $\mathcal{D}$ and any bounded 
		subset of $\mathcal{D}$ is mapped into a compact set in $\L^2(\Omega)$. Therefore, it is clear that $\mathcal{A}$ is completely continuous on $\mathcal{D}$. It is easy to verify that the operator $\mathcal{B}$, defined by relation \eqref{1i}, is also completely 
		continuous on $\mathcal{D}$, since $\mathcal{B}$ is  the composition of a nonlinear completely continuous operator and a linear bounded operator. So, the nonlinear operator equation of second kind \eqref{1j} has a solution. Hence, the inverse problem \eqref{1a}-\eqref{1e} has a solution. 
	\end{proof}
	\subsection{Uniqueness and stability}
	We have proved the existence of solution $\{\u,\nabla p,\f\}$ to the inverse problem \eqref{1a}-\eqref{1e} in the previous subsection. Now, in order to get a result on the uniqueness and stability, we first provide some supporting Lemmas. Let $(\u_i,\nabla p_i,\f_i)$ $(i=1,2)$ be the solutions of the inverse problem \eqref{1a}-\eqref{1e} corresponding to the given data $(\u_{0i},\boldsymbol{\varphi}_i,\nabla\psi_i,g_i)$ $(i=1,2)$ and set 
	\begin{align*}
		&\u:=\u_1-\u_2, \ \ \ \ \ \nabla p:=\nabla(p_1-p_2), \ \  \ \ \f:=\f_1-\f_2, \ \ \ \u_0:=\u_{01}-\u_{02}, \\& \boldsymbol{\boldsymbol{\varphi}}:=\boldsymbol{\varphi}_1-\boldsymbol{\varphi}_2, \ \ \ \ \nabla\psi:=\nabla(\psi_1-\psi_2), \ \ \  g:=g_1-g_2.
	\end{align*}
	The following lemma establishes the stability of the velocity $\u$ of the solution of the inverse problem.
	\begin{lemma}
		Let	$\u_0 \in \H^2(\Omega) \cap \V$, $g, g_t \in \C(\overline\Omega\times [0,T])$ satisfy the assumptions \eqref{1f} and $\f \in \L^2(\Omega)$. Then, for $r \in [1, \infty)$, we have 
		\begin{align}\label{S1}
			&\sup_{t\in[0,T]}\|\u(t)\|_{\H}^2+\mu \int_0^T\|\u(t)\|_{\V}^2\d t +\frac{\beta}{2^{r-1}}\int_0^T\|\u(t)\|_{\widetilde{\L}^{r+1}}^{r+1}\d t \nonumber\\&\quad\leq C \left(\|\u_0\|_{\H}^2+ \|\f\|_{\L^2}^2+\|g\|_0^2\right).
		\end{align} 
	\end{lemma}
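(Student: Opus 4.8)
The plan is to treat \eqref{S1} as a basic energy estimate for the difference $\u=\u_1-\u_2$, proceeding in the same spirit as the proof of Lemma \ref{lemma1} and reusing the convective/monotonicity bookkeeping already developed. First I would write down the equation satisfied by $\u$. Subtracting the two copies of \eqref{1a} and splitting $\f_1 g_1-\f_2 g_2=\f g_1+\f_2 g$ and $(\u_1\cdot\nabla)\u_1-(\u_2\cdot\nabla)\u_2=(\u\cdot\nabla)\u_1+(\u_2\cdot\nabla)\u$, the difference solves
\begin{align*}
\u_t-\mu\Delta\u+(\u\cdot\nabla)\u_1+(\u_2\cdot\nabla)\u+\alpha\u+\beta\big(|\u_1|^{r-1}\u_1-|\u_2|^{r-1}\u_2\big)+\nabla p=\f g_1+\f_2 g,
\end{align*}
with $\u(0)=\u_0$ and $\u=\boldsymbol{0}$ on $\partial\Omega$. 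The additive right-hand side $\|\f\|_{\L^2}^2+\|g\|_0^2$ of \eqref{S1} is dictated precisely by this splitting of the forcing into two products.

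Next I would take the $\H$ inner product with $\u$. Using $((\u_2\cdot\nabla)\u,\u)=0$ and the antisymmetry $((\u\cdot\nabla)\u_1,\u)=-((\u\cdot\nabla)\u,\u_1)$, this yields
\begin{align*}
\tfrac12\tfrac{\d}{\d t}\|\u(t)\|_{\H}^2+\mu\|\u(t)\|_{\V}^2+\alpha\|\u(t)\|_{\H}^2+\beta\big(|\u_1|^{r-1}\u_1-|\u_2|^{r-1}\u_2,\u\big)=\big(\f g_1+\f_2 g,\u\big)+\big((\u\cdot\nabla)\u,\u_1\big).
\end{align*}
For the Forchheimer difference I would invoke the monotonicity bound \eqref{3e}, giving the lower bound $\tfrac{\beta}{2}\||\u_1|^{(r-1)/2}\u\|_{\H}^2+\tfrac{\beta}{2}\||\u_2|^{(r-1)/2}\u\|_{\H}^2$; combined with the elementary inequality used for \eqref{C6} this produces the coercive term $\tfrac{\beta}{2^{r-1}}\|\u\|_{\widetilde{\L}^{r+1}}^{r+1}$ that must survive on the left of \eqref{S1}. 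The forcing is bounded by Young's inequality as $|(\f g_1+\f_2 g,\u)|\le C(\|\f\|_{\L^2}^2\|g_1\|_0^2+\|\f_2\|_{\L^2}^2\|g\|_0^2)+\tfrac{\alpha}{2}\|\u\|_{\H}^2$, with $\|g_1\|_0$ and $\|\f_2\|_{\L^2}$ folded into $C$.

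The genuinely nonlinear step is the cross-term $((\u\cdot\nabla)\u,\u_1)$, which I would treat exactly as in \eqref{3f}, \eqref{3j}, \eqref{3l}. For $d=2,3$ and $r>3$, Cauchy--Schwarz followed by the H\"older--Young splitting of \eqref{3i} applied to $\|\u_1\,\u\|_{\H}^2$ absorbs $\tfrac{\mu}{2}\|\u\|_{\V}^2$ and part of the coercive Forchheimer term, leaving only a remainder $C\|\u\|_{\H}^2$; for $d=r=3$ the same step closes under $2\beta\mu\ge1$; and for $d=2$, $r\in[1,3]$ I would instead use Ladyzhenskaya's inequality to bound the cross-term by $\sqrt2\,\|\u\|_{\H}\|\u\|_{\V}\|\u_1\|_{\V}$ and Young's inequality, leaving $\tfrac{2}{\mu}\|\u\|_{\H}^2\|\u_1\|_{\V}^2$. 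After absorbing all $\|\u\|_{\V}^2$ and coercive contributions on the left, integrating in time, applying Gronwall's inequality (with integrating factor $\exp(C\int_0^T\|\u_1\|_{\V}^2\,\d t)$ in the $2$D low-$r$ case and $\exp(CT)$ otherwise, both finite by Lemma \ref{lemma1}), and taking the supremum over $t\in[0,T]$, I obtain \eqref{S1}.

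The main obstacle is uniform control of the convective cross-term across the full range $r\in[1,\infty)$ while keeping the positive $\widetilde{\L}^{r+1}$ term on the left: for $r\ge 3$ one trades the cross-term against the monotonicity lower bound via \eqref{3i}, but the case $d=2$, $r\in[1,3]$ cannot borrow this coercivity and must instead rely on the Ladyzhenskaya--Gronwall mechanism. The delicate bookkeeping is to arrange the constants so that only $\|\u_0\|_{\H}^2$, $\|\f\|_{\L^2}^2$ and $\|g\|_0^2$ remain on the right, with every solution-dependent quantity (the $\H$-, $\V$- and $\H^2$-norms of $\u_1,\u_2$ supplied by Lemmas \ref{lemma1}--\ref{lem2.5}, together with $\|g_1\|_0$ and $\|\f_2\|_{\L^2}$) absorbed into $C$.
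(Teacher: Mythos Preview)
Your proposal is correct and follows essentially the same route as the paper: derive the difference equation, test with $\u$, use the monotonicity bound \eqref{3e} together with \eqref{C6} to produce the $\widetilde{\L}^{r+1}$ coercive term, and then split into the three cases ($r>3$; $d=r=3$; $d=2$ low $r$) to absorb the convective cross-term before applying Gronwall. The only cosmetic difference is your choice of bilinear splitting $(\u\cdot\nabla)\u_1+(\u_2\cdot\nabla)\u$, which leaves $\u_1$ in the surviving cross-term, whereas the paper uses $(\u_1\cdot\nabla)\u+(\u\cdot\nabla)\u_2$ and works with $\u_2$; the subsequent estimates are identical up to this relabeling.
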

	\begin{proof}
		Subtracting the equations for $\{\u_i,\nabla p_i, \f_i\} \ (i=1,2)$, we see that
		\begin{align}\label{S2}
			&\u_t-\mu \Delta \u+(\u_1 \cdot \nabla)\u\nonumber+(\u \cdot \nabla)\u_2+\alpha \u\\&\quad+\beta \left(|\u_1|^{r-1}\u_1-|\u_2|^{r-1}\u_2\right)+\nabla p=\f g_1+\f_2 g.
		\end{align}
		Taking the inner product with $\u(\cdot)$ of the equation \eqref{S2}, we find
		\begin{align}\label{S3}
			&\frac{1}{2}\frac{\d}{\d t}\|\u(t)\|_{\H}^2+\mu \|\u(t)\|_{\V}^2+\alpha\|\u(t)\|_{\H}^2\nonumber\\&=((\f g_1(t)+\f_2 g(t),\u(t)) -((\u(t) \cdot \nabla)\u_2(t),\u(t))\nonumber\\&\quad-\beta \left(|\u_1(t)|^{r-1}\u_1(t)-|\u_2(t)|^{r-1}\u_2(t),\u(t)\right).
		\end{align}
		Using H\"older's and Young's inequalities, we estimate $|(\f g_1+\f_2 g,\u)|$ as
		\begin{align}\label{S4}
			|(\f g_1+\f_2 g,\u)|&\leq \left(\|\f\|_{\L^2}\|g_1\|_{\mathrm{L}^\infty}+\|\f_2\|_{\L^2}\|g\|_{\mathrm{L}^\infty}\right)\|\u\|_{\H}\nonumber\\&\leq\frac{1}{2 \alpha}\left(\|\f\|_{\L^2}\|g_1\|_{\mathrm{L}^\infty}+\|\f_2\|_{\L^2}\|g\|_{\mathrm{L}^\infty}\right)^2+\frac{\alpha}{2}\|\u\|_{\H}^2.
		\end{align}
		\vskip 0.2 cm
		\noindent\textbf{Case I:} \emph{$d=3$ and $r > 3$.}
		An estimate similar to \eqref{C61} gives
		\begin{align}\label{S5}
			&\beta(|\u_1|^{r-1}\u_1-|\u_2|^{r-1}\u_2,\u) +((\u \cdot \nabla)\u_2,\u) \nonumber\\&\quad\geq \frac{\beta}{2^r}\|\u\|_{\widetilde{\L}^{r+1}}^{r+1}-\frac{\mu }{2}\|\u\|_{\V}^2-\frac{r-3}{2\mu(r-1)}\left(\frac{4}{\beta\mu (r-1)}\right)^{\frac{2}{r-3}}\|\u\|_{\H}^2,
		\end{align}
		for $r>3$. Substituting \eqref{S4} and \eqref{S5} in \eqref{S3}, and then integrating from $0$ to $t$, we obtain
		\begin{align}\label{S6}
			&\|\u(t)\|_{\H}^2+\mu \int_0^t\|\u(s)\|_{\V}^2\d  s+\alpha \int_0^t\|\u(s)\|_{\H}^2\d s +\frac{\beta}{2^{r-1}}\int_0^t\|\u(s)\|_{\widetilde{\L}^{r+1}}^{r+1}\d s \nonumber\\&\quad \leq \|\u_0\|_{\H}^2+ \frac{T}{\alpha}\left(\|\f\|_{\L^2}\|g_1\|_0+\|\f_2\|_{\H}\|g\|_0\right)^2+\frac{r-3}{\mu(r-1)}\left(\frac{4}{\beta\mu (r-1)}\right)^{\frac{2}{r-3}}\int_0^t \|\u(s)\|_{\H}^2 \d s.
		\end{align} 
		An application of Gronwall's inequality in \eqref{S6}, followed by taking supremum on both sides over time from $0$ to $T$ results in 
		\begin{align*}
			&\sup_{t\in[0,T]}\|\u(t)\|_{\H}^2+\mu \int_0^T\|\u(t)\|_{\V}^2\d  t+\alpha \int_0^T\|\u(t)\|_{\H}^2\d t+\frac{\beta}{2^{r-1}}\int_0^T\|\u(t)\|_{\widetilde{\L}^{r+1}}^{r+1}\d t \\&\quad \leq \left\{\|\u_0\|_{\H}^2+ \frac{T}{\alpha}\left(\|\f\|_{\L^2}\|g_1\|_0+\|\f_2\|_{\H}\|g\|_0\right)^2\right\}
			\exp \left\{\frac{r-3}{\mu(r-1)}\left(\frac{4}{\beta\mu (r-1)}\right)^{\frac{2}{r-3}}T\right\},
		\end{align*} 
		and \eqref{S1} follows.\
		\vskip 0.2 cm
		\noindent\textbf{Case II:} \emph{$d=r=3$.}
		A calculation similar to \eqref{C11} gives
		\begin{align}\label{S7}
			&\beta(|\u_1|^{2}\u_1-|\u_2|^{2}\u_2,\u) +((\u \cdot \nabla)\u_2,\u) \geq \frac{1}{2}\left(\beta-\frac{1}{2\mu \theta}\right)\|\u\|_{\widetilde{\L}^4}^4-\mu \theta\|\u\|_{\V}^2. 
		\end{align}
		Substituting \eqref{S4} and \eqref{S7} in \eqref{S3}, we infer that 
		\begin{align*}
			&\|\u(t)\|_{\H}^2+2\mu(1-\theta)\int_0^t\|\u(s)\|_{\V}^2\d  s+\alpha\int_0^t\|\u(s)\|_{\H}^2\d s +\left(\beta-\frac{1}{2\mu \theta}\right)\int_0^t\|\u(s)\|_{\widetilde{\L}^{4}}^{4}\d s \\&\quad \leq \|\u_0\|_{\H}^2+ \frac{T}{\alpha}\left(\|\f\|_{\L^2}\|g_1\|_0+\|\f_2\|_{\H}\|g\|_0\right)^2,
		\end{align*} 
		and \eqref{S1} follows, provided $2\beta \mu \geq1$.
		\vskip 0.2 cm
		\noindent\textbf{Case III:} \emph{$d=2$ and $r \geq 1$.} 
		We note that
		\begin{align}\label{S9}
			|((\u \cdot \nabla)\u_2,\u)|\leq\|\u\|_{\widetilde{\L}^{4}}^2\|\nabla\u_2\|_{\H}\leq\sqrt{2}\|\u\|_{\H}\|\u\|_{\V}\|\u_2\|_{\V}\leq\frac{\mu}{2}\|\u\|_{\V}^2+\frac{1}{\mu}\|\u\|_{\H}^2\|\u_2\|_{\V}^2.
		\end{align}
		\begin{align}\label{S10}
			\beta(|\u_1|^{r-1}\u_1-|\u_2|^{r-1}\u_2,\u) \geq \frac{\beta}{2}\||\u_1|^{\frac{r-1}{2}}\u\|_{\H}^2+\frac{\beta}{2}\||\u_2|^{\frac{r-1}{2}}\u\|_{\H}^2 \geq \frac{\beta}{2^{r}}\|\u\|_{\widetilde{\L}^{r+1}}^{r+1}.
		\end{align}
		Substituting \eqref{S4}, \eqref{S9} and \eqref{S10} in \eqref{S3}, and then integrating from $0$ to $t$, we obtain
		\begin{align}\label{S11}
			&\|\u(t)\|_{\H}^2+\mu \int_0^t\|\u(s)\|_{\V}^2\d  s+\alpha\int_0^t\|\u(s)\|_{\H}^2\d s +\frac{\beta}{2^{r-1}}\int_0^t\|\u(s)\|_{\widetilde{\L}^{r+1}}^{r+1}\d s \nonumber\\&\quad \leq \|\u_0\|_{\H}^2+ \frac{T}{\alpha}\left(\|\f\|_{\L^2}\|g_1\|_0+\|\f_2\|_{\H}\|g\|_0\right)^2+\frac{2}{\mu}\int_0^t\|\u_2(s)\|_{\V}^2 \|\u(s)\|_{\H}^2 \d s,
		\end{align} 
		for all $t \in [0,T]$.	Applying Gronwall's inequality in \eqref{S11}, one can easily see that
		\begin{align*}
			\|\u(t)\|_{\H}^2\leq \exp \left(\frac{2}{\mu}\int_0^T\|\u_2(t)\|_{\V}^2\d t\right)\left\{\|\u_0\|_{\H}^2+ \frac{T}{\alpha}\left(\|\f\|_{\L^2}\|g_1\|_0+\|\f_2\|_{\H}\|g\|_0\right)^2\right\},
		\end{align*}
		for all $t \in [0,T]$. Thus, from \eqref{S11}, it is immediate that
		\begin{align*}
			&\sup_{t\in[0,T]}\|\u(t)\|_{\H}^2+\mu \int_0^T\|\u(t)\|_{\V}^2\d t+\alpha\int_0^T\|\u(t)\|_{\H}^2\d t +\frac{\beta}{2^{r-1}}\int_0^T\|\u(t)\|_{\widetilde{\L}^{r+1}}^{r+1}\d t \\&\quad \leq \exp \left(\frac{2}{\mu}\int_0^T\|\u_2(t)\|_{\V}^2\d t\right)\left\{\|\u_0\|_{\H}^2+ \frac{T}{\alpha}\left(\|\f\|_{\L^2}\|g_1\|_0+\|\f_2\|_{\H}\|g\|_0\right)^2\right\},
		\end{align*} 
		and \eqref{S1} follows.
	\end{proof}
	\begin{lemma}
		Let	$\u_0 \in \H^2(\Omega) \cap \V$, $g, g_t \in \C(\overline\Omega\times [0,T])$ satisfy the assumptions \eqref{1f} and $\f \in \L^2(\Omega)$. Then, we have
		\begin{align}\label{S111}
			\sup_{t\in[0,T]}\|\u(t)\|_{\V}^2+\int_0^T\|\u_t(t)\|_{\H}^2\d t \leq C \bigg\{ \|\u_0\|_{\V}^2+\|\f\|_{\L^2}^2+\|g\|_0^2\bigg\}.
		\end{align}
	\end{lemma}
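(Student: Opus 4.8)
The plan is to run the same energy computation that produced Lemma \ref{lemma3}, but now for the difference system \eqref{S2}, and to close the estimate at the end by feeding in the already-established bound \eqref{S1} rather than by a Gronwall argument. First I would take the $\H$-inner product of \eqref{S2} with $\u_t(\cdot)$. Since $\u_t=\u_{1t}-\u_{2t}$ is divergence free and vanishes on $\partial\Omega$ (so that $\nabla p\perp\u_t$ in $\H$ and the Laplacian integrates by parts with no boundary contribution), and recalling from the remark following Lemma \ref{lemma2.7} that $\u_t\in\C([0,T];\H)\cap\mathrm{L}^2(0,T;\V)$ and $\u_{tt}\in\mathrm{L}^2(0,T;\H)$ for each of the two strong solutions, this yields, for a.e. $t\in[0,T]$,
\begin{align*}
	&\|\u_t(t)\|_{\H}^2+\frac{\mu}{2}\frac{\d}{\d t}\|\u(t)\|_{\V}^2+\frac{\alpha}{2}\frac{\d}{\d t}\|\u(t)\|_{\H}^2\\
	&\quad=(\f g_1(t)+\f_2 g(t),\u_t(t))-((\u_1(t)\cdot\nabla)\u(t),\u_t(t))\\
	&\qquad-((\u(t)\cdot\nabla)\u_2(t),\u_t(t))-\beta\big(|\u_1(t)|^{r-1}\u_1(t)-|\u_2(t)|^{r-1}\u_2(t),\u_t(t)\big).
\end{align*}

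Next I would bound each term on the right so that $\u_t$ enters only through $\|\u_t\|_{\H}$ (never $\|\u_t\|_{\V}$), absorbing a quarter of $\|\u_t\|_{\H}^2$ into the left each time. For the forcing term, H\"older's and Young's inequalities give $|(\f g_1+\f_2 g,\u_t)|\le\tfrac18\|\u_t\|_{\H}^2+C(\|\f\|_{\L^2}^2\|g_1\|_0^2+\|\f_2\|_{\L^2}^2\|g\|_0^2)$. For the first convective term, $|((\u_1\cdot\nabla)\u,\u_t)|\le\|\u_1\|_{\wi\L^{\infty}}\|\u\|_{\V}\|\u_t\|_{\H}\le\tfrac18\|\u_t\|_{\H}^2+C\|\u_1\|_{\wi\L^{\infty}}^2\|\u\|_{\V}^2$. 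For the second, $|((\u\cdot\nabla)\u_2,\u_t)|\le\|\u\|_{\wi\L^{4}}\|\nabla\u_2\|_{\wi\L^{4}}\|\u_t\|_{\H}$, and after Ladyzhenskaya's (or the stated Gagliardo--Nirenberg) inequality on $\|\u\|_{\wi\L^{4}}$ and Young's inequality this is $\le\tfrac18\|\u_t\|_{\H}^2+C\|\nabla\u_2\|_{\wi\L^{4}}^2\big(\|\u\|_{\V}^2+\|\u\|_{\H}^2\big)$. For the nonlinear damping difference I would use the pointwise bound $\big||\u_1|^{r-1}\u_1-|\u_2|^{r-1}\u_2\big|\le r\big(|\u_1|^{r-1}+|\u_2|^{r-1}\big)|\u|$ exactly as in \eqref{C141}, followed by Cauchy--Schwarz and Young, to obtain $\big|\beta\big(|\u_1|^{r-1}\u_1-|\u_2|^{r-1}\u_2,\u_t\big)\big|\le\tfrac18\|\u_t\|_{\H}^2+C\big(\|\u_1\|_{\wi\L^{\infty}}^{r-1}+\|\u_2\|_{\wi\L^{\infty}}^{r-1}\big)^2\|\u\|_{\H}^2$. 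The crucial observation is that, by Agmon's inequality together with Lemma \ref{lem2.5} and the a priori bounds of Lemmas \ref{lemma1}--\ref{lem2.5}, the quantities $\sup_{[0,T]}\|\u_i\|_{\wi\L^{\infty}}$, $\sup_{[0,T]}\|\nabla\u_i\|_{\wi\L^{4}}$, $\|g_1\|_0$ and $\|\f_2\|_{\L^2}$ are all finite constants depending only on the input data; this is precisely where the hypothesis $r\ge1$ in 2D and $r\ge3$ in 3D (so that $\u_i\in\mathrm{L}^{\infty}(0,T;\H^2(\Omega)\cap\V)\hookrightarrow\mathrm{L}^{\infty}(0,T;\wi\L^{\infty})$) is used.

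Collecting these estimates absorbs $\tfrac12\|\u_t\|_{\H}^2$ on the left, and integrating from $0$ to $t$ gives
\begin{align*}
	\frac{\mu}{2}\|\u(t)\|_{\V}^2+\frac{\alpha}{2}\|\u(t)\|_{\H}^2+\frac{1}{2}\int_0^t\|\u_t(s)\|_{\H}^2\d s\le C\|\u_0\|_{\V}^2+C\int_0^t\big(\|\u(s)\|_{\V}^2+\|\u(s)\|_{\H}^2\big)\d s+CT\big(\|\f\|_{\L^2}^2+\|g\|_0^2\big),
\end{align*}
where I have also used the Poincar\'e inequality $\|\u_0\|_{\H}\le C\|\u_0\|_{\V}$. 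At this point, instead of Gronwall, I would invoke the previously proved estimate \eqref{S1}, which bounds $\int_0^T\|\u(t)\|_{\V}^2\d t$ and $\sup_{t\in[0,T]}\|\u(t)\|_{\H}^2$ (hence $\int_0^T\|\u(t)\|_{\H}^2\d t$) by $C(\|\u_0\|_{\H}^2+\|\f\|_{\L^2}^2+\|g\|_0^2)\le C(\|\u_0\|_{\V}^2+\|\f\|_{\L^2}^2+\|g\|_0^2)$; substituting this in and taking the supremum over $t\in[0,T]$ yields \eqref{S111}.

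I expect the main obstacle to be the nonlinear damping difference $\beta(|\u_1|^{r-1}\u_1-|\u_2|^{r-1}\u_2,\u_t)$: paired with $\u_t$ it carries no monotonicity or exact-derivative structure (unlike the term $\beta(|\u|^{r-1}\u,\u_t)$ in \eqref{E19}), so the only workable route seems to be the crude pointwise mean-value bound, which in turn forces the use of the uniform $\wi\L^{\infty}$-control of the two strong solutions coming from Lemma \ref{lem2.5}, and hence the restriction to $r\ge3$ when $d=3$. A secondary, more technical point is to estimate the two convective difference terms so that only $\|\u_t\|_{\H}$ — and not $\|\u_t\|_{\V}$ — appears, by placing the $\wi\L^{4}$/$\wi\L^{\infty}$ weights entirely on $\u_1$ and $\u_2$; this is what makes it unnecessary to first establish an $\mathrm{L}^2(0,T;\V)$-stability estimate for $\u_t$.
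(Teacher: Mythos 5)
Your proof is correct and follows essentially the same route as the paper: test \eqref{S2} with $\u_t$, place all $\wi{\L}^4$/$\wi{\L}^\infty$ weights on $\u_1,\u_2$ (controlled via Lemma \ref{lem2.5} and Agmon's inequality) so that only $\|\u_t\|_{\H}$ appears, and handle the damping difference by the Taylor/mean-value bound. The only cosmetic difference is that you close the estimate by substituting \eqref{S1} directly for $\int_0^T\|\u(t)\|_{\V}^2\,\d t$ and $\int_0^T\|\u(t)\|_{\H}^2\,\d t$, whereas the paper first applies Gronwall's inequality in \eqref{S17}/\eqref{S20} and then invokes \eqref{S1}; both are legitimate since the time-dependent coefficients are uniformly bounded by the a priori estimates.
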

	\begin{proof}
		Taking the inner product with $\u_t(\cdot)$ of the equation \eqref{S2}, we find
		\begin{align}\label{S12}
			&\|\u_t(t)\|_{\H}^2+ \frac{\mu}{2}\frac{\d}{\d t} \|\u(t)\|_{\V}^2+\frac{\alpha}{2}\frac{\d}{\d t} \|\u(t)\|_{\H}^2\\&=(\f g_1(t)+\f_2 g(t),\u_t(t)) -((\u(t) \cdot \nabla)\u_2(t),\u_t(t))\nonumber\\&\quad-((\u_1(t) \cdot \nabla)\u(t),\u_t(t))-\beta (|\u_1(t)|^{r-1}\u_1(t)-|\u_2(t)|^{r-1}\u_2(t),\u_t(t) ):=\sum_{j=1}^4I_j.
		\end{align}
		Next, we estimate each $I_j$'s $(j=1,2,3,4)$ separately as follows:
		Using H\"older's and Young's inequalities, we estimate $I_1$ as
		\begin{align}\label{S13}
			I_1&\leq \left(\|\f\|_{\L^2}\|g_1\|_{\mathrm{L}^\infty}+\|\f_2\|_{\L^2}\|g\|_{\mathrm{L}^\infty}\right)\|\u_t\|_{\H}\nonumber\\&\leq2\left(\|\f\|_{\L^2}\|g_1\|_{\mathrm{L}^\infty}+\|\f_2\|_{\L^2}\|g\|_{\mathrm{L}^\infty}\right)^2+\frac{1}{8}\|\u_t\|_{\H}^2.
		\end{align}
		\vskip 0.2 cm
		\noindent\textbf{Case I:} \emph{$d=2$ and $r \geq 1$.} Applying H\"older's, Poincar\'e's and Young's inequalities, we estimate $I_2$ and $I_3$ as
		\begin{align}
			I_2 &\leq \|\u\|_{\widetilde{\L}^{4}} \|\nabla\u_2\|_{\widetilde{\L}^{4}}\|\u_t\|_{\H}\leq C\|\u\|_{\H}^{\frac{1}{2}}\|\u\|_{\V}^{\frac{1}{2}}\|\u_2\|_{\H}^{\frac{1}{4}}\|\u_2\|_{\H^2(\Omega) \cap \V}^{\frac{3}{4}}\|\u_t\|_{\H}\nonumber\\&\leq C\|\u\|_{\V}\|\u_2\|_{\H^2(\Omega) \cap \V}\|\u_t\|_{\H}\leq\frac{1}{8}\|\u_t\|_{\H}^2+C\|\u\|_{\V}^2\|\u_2\|_{\H^2(\Omega) \cap \V}^2,\label{S14} \\
			I_3 &\leq \|\u_1\|_{\widetilde{\L}^{\infty}} \|\nabla\u\|_{\H}\|\u_t\|_{\H}\leq C\|\u_1\|_{\H}^{\frac{1}{2}}\|\u_1\|_{\H^2(\Omega)\cap\V}^{\frac{1}{2}}\|\u\|_{\V}\|\u_t\|_{\H}\nonumber\\&\leq \frac{1}{8}\|\u_t\|_{\H}^2+C\|\u_1\|_{\H}\|\u_1\|_{\H^2(\Omega) \cap \V}\|\u\|_{\V}^2. 	\label{S15}
		\end{align}
		Let us define $h_1(\u):=|\u|^{r-1}\u$. Then using Taylor's formula and Agmon's inequality, we obtain 
		\begin{align}\label{S16}
			I_4&\leq\beta \||\u_1|^{r-1}\u_1-|\u_2|^{r-1}\u_2\|_{\H}\|\u_t\|_{\H}\nonumber\\& \leq\beta\left\|\int_0^1h'_1(\theta\u_1+(1-\theta)\u_2)\d \theta (\u_1-\u_2)\right\|_{\H}\|\u_t\|_{\H} \nonumber\\&\leq \beta r\sup_{0<\theta<1} \|\theta\u_1+(1-\theta)\u_2\|_{\widetilde{\L}^\infty}^{r-1}\|\u_1-\u_2\|_{\H}\|\u_t\|_{\H}\nonumber\\&\leq C\left(\|\u_1\|_{\widetilde{\L}^\infty}+\|\u_2\|_{\widetilde{\L}^\infty}\right)^{r-1}\|\u\|_{\H} \|\u_t\|_{\H}\nonumber\\&\leq C\left(\|\u_1\|_{\H}^\frac{1}{2}\|\u_1\|_{\H^2(\Omega) \cap \V}^\frac{1}{2}+\|\u_2\|_{\H}^\frac{1}{2}\|\u_2\|_{\H^2(\Omega) \cap \V}^\frac{1}{2}\right)^{r-1}\|\u\|_{\H}\|\u_t\|_{\H}\nonumber\\&\leq C\left(\|\u_1\|_{\H}^\frac{1}{2}\|\u_1\|_{\H^2(\Omega) \cap \V}^\frac{1}{2}+\|\u_2\|_{\H}^\frac{1}{2}\|\u_2\|_{\H^2(\Omega) \cap \V}^\frac{1}{2}\right)^{2(r-1)}\|\u\|_{\H}^2+\frac{1}{8}\|\u_t\|_{\H}^2,
		\end{align}
		for $r\geq1$. Substituting the estimates \eqref{S13}-\eqref{S16} in \eqref{S12}, and then integrating from $0$ to $t$, we have
		\begin{align}\label{S17}
			&\int_0^t\|\u_t(s)\|_{\H}^2\d s+ \mu \|\u(t)\|_{\V}^2+\alpha \|\u(t)\|_{\H}^2 \nonumber \\&\quad\leq\mu \|\u_0\|_{\V}^2+\alpha\|\u_0\|_{\H}^2+4T\big(\|\f\|_{\L^2}\|g_1\|_0+\|\f_2\|_{\H}\|g\|_0\big)^2\nonumber\\&\qquad+C\int_0^t\|\u(s)\|_{\V}^2\|\u_2(s)\|_{\H^2(\Omega) \cap \V}^2\d s+C\int_0^t \|\u_1(s)\|_{\H}\|\u_1(s)\|_{\H^2(\Omega) \cap \V}\|\u(s)\|_{\V}^2\d s\nonumber\\&\qquad+C\int_0^t \left(\|\u_1(s)\|_{\H}^\frac{1}{2}\|\u_1(s)\|_{\H^2(\Omega) \cap \V}^\frac{1}{2}+\|\u_2(s)\|_{\H}^\frac{1}{2}\|\u_2(s)\|_{\H^2(\Omega) \cap \V}^\frac{1}{2}\right)^{2(r-1)}\|\u(s)\|_{\H}^2\d s,
		\end{align}
		for all $t \in [0,T]$. An application of Gronwall's inequality in \eqref{S17} gives
		\begin{align}\label{3.63}
			&\mu \|\u(t)\|_{\V}^2
			\nonumber	\\&\leq \bigg\{\mu \|\u_0\|_{\V}^2+\alpha \|\u_0\|_{\H}^2+4T\big(\|\f\|_{\L^2}\|g_1\|_0+\|\f_2\|_{\H}\|g\|_0\big)^2 \nonumber\\&\quad+CT\sup_{t\in[0,T]}\left(\|\u_1(t)\|_{\H}^\frac{1}{2}\|\u_1(t)\|_{\H^2(\Omega) \cap \V}^\frac{1}{2}+\|\u_2(t)\|_{\H}^\frac{1}{2}\|\u_2(t)\|_{\H^2(\Omega) \cap \V}^\frac{1}{2}\right)^{2(r-1)} \sup_{t\in[0,T]}\|\u(t)\|_{\H}^2\bigg\} \nonumber\\&\quad\times\exp\bigg\{C T\bigg(\sup_{t\in[0,T]}\|\u_2(t)\|_{\H^2(\Omega) \cap \V}^2+\sup_{t\in[0,T]}\|\u_1(t)\|_{\H}\sup_{t\in[0,T]}\|\u_1(t)\|_{\H^2(\Omega) \cap \V}\bigg)\bigg\},
		\end{align}
		for all $t \in [0,T]$. Thus, using \eqref{S1} and \eqref{3.63} in \eqref{S17}, we immediately get \eqref{S111}.
		\vskip 0.2 cm
		\noindent\textbf{Case II:} \emph{$d=3$ and $r \geq 3$.} 
		We estimate the terms $I_2$ and $I_3$ using H\"older's, Poincar\'e's and Young's inequalities as
		\begin{align}
			I_2 &\leq \|\u\|_{\widetilde{\L}^{4}} \|\nabla\u_2\|_{\widetilde{\L}^{4}}\|\u_t\|_{\H}\leq C\|\u\|_{\H}^{\frac{1}{4}}\|\u\|_{\V}^{\frac{3}{4}}\|\u_2\|_{\H}^{\frac{1}{8}}\|\u_2\|_{\H^2(\Omega) \cap \V}^{\frac{7}{8}}\|\u_t\|_{\H}\nonumber\\&\leq C\|\u\|_{\V}\|\u_2\|_{\H^2(\Omega) \cap \V}\|\u_t\|_{\H}\leq\frac{1}{8}\|\u_t\|_{\H}^2+C\|\u\|_{\V}^2\|\u_2\|_{\H^2(\Omega) \cap \V}^2,\label{S18}\\
			I_3 &\leq \|\u_1\|_{\widetilde{\L}^{\infty}} \|\nabla\u\|_{\H}\|\u_t\|_{\H}\leq C\|\u_1\|_{\V}^{\frac{1}{2}}\|\u_1\|_{\H^2(\Omega)\cap\V}^{\frac{1}{2}}\|\u\|_{\V}\|\u_t\|_{\H}\nonumber\\&\leq \frac{1}{8}\|\u_t\|_{\H}^2+C\|\u_1\|_{\V}\|\u_1\|_{\H^2(\Omega) \cap \V}\|\u\|_{\V}^2.\label{S19}
		\end{align}
		A calculation similar to\eqref{S16} yields the estimate 
		\begin{align}\label{S161}
			I_4&\leq\beta \||\u_1|^{r-1}\u_1-|\u_2|^{r-1}\u_2\|_{\H}\|\u_t\|_{\H}\nonumber\\&\leq C\left(\|\u_1\|_{\V}^\frac{1}{2}\|\u_1\|_{\H^2(\Omega) \cap \V}^\frac{1}{2}+\|\u_2\|_{\V}^\frac{1}{2}\|\u_2\|_{\H^2(\Omega) \cap \V}^\frac{1}{2}\right)^{2(r-1)}\|\u\|_{\H}^2+\frac{1}{8}\|\u_t\|_{\H}^2,
		\end{align}
		for $r\geq1$. Substituting \eqref{S13}, \eqref{S18}, \eqref{S19} and \eqref{S161} in \eqref{S12}, and then integrating from $0$ to $t$, we obtain
		\begin{align}\label{S20}
			&\int_0^t\|\u_t(s)\|_{\H}^2\d s+ \mu \|\u(t)\|_{\V}^2+\alpha\|\u(t)\|_{\H}^2 \nonumber\\&\quad\leq\mu \|\u_0\|_{\V}^2+\alpha\|\u_0\|_{\H}^2+4T\big(\|\f\|_{\L^2}\|g_1\|_0+\|\f_2\|_{\H}\|g\|_0\big)^2\nonumber\\&\qquad+C\int_0^t\|\u(s)\|_{\V}^2\|\u_2(s)\|_{\H^2(\Omega) \cap \V}^2\d s+C\int_0^t \|\u_1(s)\|_{\V}\|\u_1(s)\|_{\H^2(\Omega) \cap \V}\|\u(s)\|_{\V}^2\d s\nonumber\\&\qquad+C\int_0^t \left(\|\u_1(s)\|_{\V}^\frac{1}{2}\|\u_1(s)\|_{\H^2(\Omega) \cap \V}^\frac{1}{2}+\|\u_2(s)\|_{\V}^\frac{1}{2}\|\u_2(s)\|_{\H^2(\Omega) \cap \V}^\frac{1}{2}\right)^{2(r-1)}\|\u(s)\|_{\H}^2\d s,
		\end{align}
		for all $t \in [0,T]$. Applying Gronwall's inequality in \eqref{S20}, and then taking supremum on both side over time from $0$ to $T$, one reach at \eqref{S111}.
	\end{proof}
	The next lemma establishes the stability of the spatially varying function $\f$ of the solution of the inverse problem, which will complete the proof of part (ii) of Theorem \ref{thm2}.
	\begin{lemma}
		Let $\u_0, \boldsymbol{\varphi} \in \H^2(\Omega) \cap \V $, $\nabla \psi \in \G(\Omega)$, $g, g_t \in \C(\overline\Omega\times [0,T])$ satisfy the assumption \eqref{1f}, and $\f \in \L^2(\Omega)$. Then, we have
		\begin{align}\label{S21}
			\|\f\|_{\L^2}\leq C\left(\|\u_0\|_{\H^2(\Omega) \cap \V}+\|g\|_0+\|g_t\|_0+\|\nabla \boldsymbol{\varphi}\|_{\H}+\|\nabla \psi-\mu \Delta \boldsymbol{\varphi}\|_{\H}\right).
		\end{align}
	\end{lemma}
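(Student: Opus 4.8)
The idea is to read off the bound for $\f=\f_1-\f_2$ from the operator equation \eqref{1j}, which both $\f_1$ and $\f_2$ satisfy, and then to reduce the whole estimate to a pointwise-in-time bound for $\u_t(\cdot,T)$, where $\u=\u_1-\u_2$.

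\emph{Reduction step.} Since each $\f_i$ solves \eqref{1j}, writing \eqref{1a} for the $i$-th solution at $t=T$ and using the overdetermination data \eqref{1e} gives $\f_i\,g_i(\cdot,T)=(\u_i)_t(\cdot,T)-\mu\Delta\boldsymbol{\varphi}_i+(\boldsymbol{\varphi}_i\cdot\nabla)\boldsymbol{\varphi}_i+\alpha\boldsymbol{\varphi}_i+\beta|\boldsymbol{\varphi}_i|^{r-1}\boldsymbol{\varphi}_i+\nabla\psi_i$. Subtracting the two identities, using $\f_1g_1-\f_2g_2=\f\,g_1(\cdot,T)+\f_2\,g(\cdot,T)$ and $(\boldsymbol{\varphi}_1\cdot\nabla)\boldsymbol{\varphi}_1-(\boldsymbol{\varphi}_2\cdot\nabla)\boldsymbol{\varphi}_2=(\boldsymbol{\varphi}\cdot\nabla)\boldsymbol{\varphi}_1+(\boldsymbol{\varphi}_2\cdot\nabla)\boldsymbol{\varphi}$, then dividing by $g_1(\cdot,T)$ and invoking the lower bound $|g_1(\cdot,T)|\ge g_T$ from \eqref{1f}, the triangle inequality yields
\begin{align*}
\|\f\|_{\L^2}\le\frac1{g_T}\Big(&\|\u_t(\cdot,T)\|_{\H}+\|\nabla\psi-\mu\Delta\boldsymbol{\varphi}\|_{\H}+\|(\boldsymbol{\varphi}\cdot\nabla)\boldsymbol{\varphi}_1\|_{\H}+\|(\boldsymbol{\varphi}_2\cdot\nabla)\boldsymbol{\varphi}\|_{\H}\\
&+\alpha\|\boldsymbol{\varphi}\|_{\H}+\beta\||\boldsymbol{\varphi}_1|^{r-1}\boldsymbol{\varphi}_1-|\boldsymbol{\varphi}_2|^{r-1}\boldsymbol{\varphi}_2\|_{\H}+\|\f_2\|_{\L^2}\|g\|_0\Big).
\end{align*}

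\emph{The $\boldsymbol{\varphi}$-difference terms.} As $\boldsymbol{\varphi}_1,\boldsymbol{\varphi}_2\in\H^2(\Omega)\cap\V$ are prescribed with controlled norms, Ladyzhenskaya's and Poincar\'e's inequalities give $\|(\boldsymbol{\varphi}\cdot\nabla)\boldsymbol{\varphi}_1\|_{\H}\le\|\boldsymbol{\varphi}\|_{\widetilde{\L}^4}\|\nabla\boldsymbol{\varphi}_1\|_{\widetilde{\L}^4}\le C\|\nabla\boldsymbol{\varphi}\|_{\H}$, Agmon's inequality gives $\|(\boldsymbol{\varphi}_2\cdot\nabla)\boldsymbol{\varphi}\|_{\H}\le\|\boldsymbol{\varphi}_2\|_{\widetilde{\L}^\infty}\|\nabla\boldsymbol{\varphi}\|_{\H}\le C\|\nabla\boldsymbol{\varphi}\|_{\H}$, while Taylor's formula for the map $\boldsymbol{v}\mapsto|\boldsymbol{v}|^{r-1}\boldsymbol{v}$ combined with Agmon's and Poincar\'e's inequalities (as in \eqref{S16}, \eqref{S161}) gives $\beta\||\boldsymbol{\varphi}_1|^{r-1}\boldsymbol{\varphi}_1-|\boldsymbol{\varphi}_2|^{r-1}\boldsymbol{\varphi}_2\|_{\H}\le C\big(\|\boldsymbol{\varphi}_1\|_{\widetilde{\L}^\infty}+\|\boldsymbol{\varphi}_2\|_{\widetilde{\L}^\infty}\big)^{r-1}\|\boldsymbol{\varphi}\|_{\H}\le C\|\nabla\boldsymbol{\varphi}\|_{\H}$; finally $\alpha\|\boldsymbol{\varphi}\|_{\H}\le C\|\nabla\boldsymbol{\varphi}\|_{\H}$ by Poincar\'e's inequality and $\|\f_2\|_{\L^2}\|g\|_0\le\|g\|_0$ since $\f_2\in\mathcal D$. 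Hence every term on the right-hand side of the reduction inequality except $\|\u_t(\cdot,T)\|_{\H}$ is already dominated by the right-hand side of \eqref{S21}.

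\emph{The bound for $\|\u_t(\cdot,T)\|_{\H}$, which is the crux.} Differentiating the difference equation \eqref{S2} in $t$ and taking the $\H$-inner product with $\u_t$, the viscous term produces the coercive quantity $\mu\|\u_t\|_{\V}^2$ (which also controls $\mu\lambda_1\|\u_t\|_{\H}^2$ by Poincar\'e), the term $(\u_1\cdot\nabla)\u_t$ drops out because $\u_1$ is divergence free, and the remaining terms — the forcing $\f(g_1)_t+\f_2 g_t$ together with $(\u_{1t}\cdot\nabla)\u$, $(\u_t\cdot\nabla)\u_2$, $(\u\cdot\nabla)\u_{2t}$ and $\beta\,\partial_t\big(|\u_1|^{r-1}\u_1-|\u_2|^{r-1}\u_2\big)$ — are estimated by H\"older's, Agmon's and Young's inequalities exactly as in the proofs of Lemmas \ref{lemma2.6}--\ref{lemma2.7} and of \eqref{S1}, \eqref{S111}, the prefactors being finite by the a priori estimates of Section~\ref{sec4} for $\u_1,\u_2$. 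Gronwall's inequality on $[\epsilon,T]$ followed by $\epsilon\to0$ (legitimate since $\u_t=(\u_1)_t-(\u_2)_t\in\C([0,T];\H)$) brings in $\|\u_t(\cdot,0)\|_{\H}$, which by \eqref{E5}--\eqref{E6} and the $\H^2(\Omega)\cap\V$-bounds on $\u_{01},\u_{02}$ satisfies $\|\u_t(\cdot,0)\|_{\H}\le C\big(\|\u_0\|_{\H^2(\Omega)\cap\V}+\|\f\|_{\L^2}\|g_1\|_0+\|\f_2\|_{\L^2}\|g\|_0\big)$. Collecting these estimates and inserting \eqref{S1}, \eqref{S111} to control the $\u$-dependent terms gives
$$\|\u_t(\cdot,T)\|_{\H}\le C\big(\|\u_0\|_{\H^2(\Omega)\cap\V}+\|g\|_0+\|g_t\|_0+\|\nabla\boldsymbol{\varphi}\|_{\H}+\|\nabla\psi-\mu\Delta\boldsymbol{\varphi}\|_{\H}\big)+C_\ast\|\f\|_{\L^2},$$
and substituting this into the reduction inequality proves \eqref{S21}, provided the self-referential coefficient $C_\ast$ is $<1$. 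This is the delicate point of the argument: $\|\f\|_{\L^2}$ re-enters the right-hand side — through the forcing $\f(g_1)_t$, through $\u_t(\cdot,0)$, and through \eqref{S1}, \eqref{S111} — so one must exploit the hypotheses \eqref{1.11}--\eqref{1.13} on $\mu$, using the dissipation $\mu\|\u_t\|_{\V}^2$ to absorb the lower-order terms in Gronwall's inequality, in order to render $C_\ast<1$; only then does the estimate close.
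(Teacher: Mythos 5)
Your proposal follows the same route as the paper's own proof: reduce to the identity $\f g_1(\cdot,T)+\f_2 g(\cdot,T)=\u_t(\cdot,T)+(\boldsymbol{\varphi}_1\cdot\nabla)\boldsymbol{\varphi}+(\boldsymbol{\varphi}\cdot\nabla)\boldsymbol{\varphi}_2-\mu\Delta\boldsymbol{\varphi}+\nabla\psi+\beta(|\boldsymbol{\varphi}_1|^{r-1}\boldsymbol{\varphi}_1-|\boldsymbol{\varphi}_2|^{r-1}\boldsymbol{\varphi}_2)$ together with $|g_1(\cdot,T)|\ge g_T$ (this is \eqref{S30}--\eqref{S31}, \eqref{S38}), estimate the $\boldsymbol{\varphi}$-difference terms by Agmon, Gagliardo--Nirenberg and Taylor's formula exactly as the paper does, and obtain the bound on $\|\u_t(\cdot,T)\|_{\H}$ by differentiating \eqref{S2} in $t$, testing with $\u_t$, estimating the five terms $E_1,\dots,E_5$, and applying Gronwall's inequality together with \eqref{S1} and \eqref{S111}. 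On all of these steps your argument and the paper's coincide.

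Where you diverge is in the treatment of the self-referential coefficient $C_\ast$, and here your diagnosis is sharper than the paper's but your cure does not work. You are right that $\|\f\|_{\L^2}$ re-enters the right-hand side of the bound for $\|\u_t(\cdot,T)\|_{\H}$ --- through the source term $(\f g_{1t},\u_t)$ in $E_1$, through $\|\u_t(0)\|_{\H}$, and through \eqref{S1}, \eqref{S111}. The paper's own estimate \eqref{S29} reads $\|\u_t(\cdot,T)\|_{\H}\le C\big(\|\u_0\|_{\H^2(\Omega)\cap\V}+\|\f\|_{\L^2}+\|g\|_0+\|g_t\|_0\big)$, and the paper then simply substitutes this into \eqref{S31} and lets the $\|\f\|_{\L^2}$ term disappear into the generic constant, with no absorption argument; so the circularity you flag is present, unacknowledged, in the published proof. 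Your proposed resolution, however, is not a resolution: the hypotheses \eqref{1.11}--\eqref{1.13} constrain only $\mu$, $\beta$, $\lambda_1$, $r$ and $\|\boldsymbol{\varphi}\|_{\widetilde{\L}^4}$, whereas $C_\ast$ carries factors of $T$, $1/g_T$, $\|g_1\|_0$, $\|g_{1t}\|_0$ and the Gronwall exponentials of the a priori bounds on $\u_1,\u_2$, none of which is made small by those conditions. Nor can the dissipation $\mu\|\u_t\|_{\V}^2$ absorb $(\f g_{1t},\u_t)$ without reintroducing $\|\f\|_{\L^2}^2$ with an uncontrolled prefactor via Young's inequality. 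Closing the estimate genuinely requires a smallness condition of the form $C_\ast<g_T$ (smallness of $T$, of $\|g_{1t}\|_0$, or of the data relative to $g_T$, as in \cite{POV,JF}); you have correctly located this gap but not filled it.
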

	\begin{proof}
		Differentiating \eqref{S2} with respect to time $t$, and then taking the inner product with $\u_t(\cdot)$, we find
		\begin{align}\label{S22}
			&\frac{1}{2}\frac{\d}{\d t}\|\u_t(t)\|_{\H}^2+ \mu \|\u_t(t)\|_{\V}^2+\alpha\|\u_t(t)\|_{\H}^2\nonumber\\&\quad=((\f g_{1t}(t)+\f_2 g_t(t),\u_t(t)) -((\u_{1t}(t) \cdot \nabla)\u(t),\u_t(t))-((\u_t(t) \cdot \nabla)\u_2(t),\u_t(t))\nonumber\\&\qquad-((\u(t) \cdot \nabla)\u_{2t}(t),\u_t(t))-\beta r \left(|\u_1(t)|^{r-1}\u_{1t}(t)-|\u_2(t)|^{r-1}\u_{2t}(t),\u_t (t)\right)\nonumber\\&\quad:=\sum_{j=1}^5E_j.
		\end{align}
		Next, we estimate each $E_j$'s $(j=1,2,3,4,5)$ separately as follows:
		Using H\"older's and Young's inequalities, we obtain
		\begin{align}\label{S23}
			E_1 &\leq \left(\|\f\|_{\L^2}\|g_{1t}\|_{\mathrm{L}^\infty}+\|\f_2\|_{\L^2}\|g_t\|_{\mathrm{L}^\infty}\right)\|\u_t\|_{\H}\nonumber\\&\leq\frac{1}{2 \alpha}\left(\|\f\|_{\L^2}\|g_{1t}\|_{\mathrm{L}^\infty}+\|\f_2\|_{\L^2}\|g_t\|_{\mathrm{L}^\infty}\right)^2+\frac{\alpha}{2}\|\u_t\|_{\V}^2.
		\end{align}
		\vskip 0.2 cm
		\noindent\textbf{Case I:} \emph{$d=2$ and $r \geq 1$.} 
		Using H\"older's, Ladyzhenskaya's, Poincar\'e's and Young's inequalities, we get
		\begin{align}
			E_2 \nonumber&\leq \|\u_{1t}\|_{\widetilde{\L}^4} \|\nabla\u\|_{\H}\|\u_t\|_{\widetilde{\L}^4}\leq\left(\frac{2}{\lambda_1}\right)^{\frac{1}{2}} \|\u_{1t}\|_{\V} \|\u\|_{\V}\|\u_t\|_{\V}\\&\leq \frac{3}{ \lambda_1 \mu} \|\u_{1t}\|_{\V}^2 \|\u\|_{\V}^2+\frac{\mu}{6}\|\u_t\|_{\V}^2,\label{S24}\\
			E_3 &\leq  \|\nabla\u_2\|_{\H}\|\u_t\|_{\widetilde{\L}^4}^2\leq \sqrt{2} \|\nabla\u_2\|_{\H}\|\u_t\|_{\H}\|\u_t\|_{\V} \leq \frac{3}{\mu} \|\nabla\u_2\|_{\H}^2\|\u_t\|_{\H}^2+\frac{\mu}{6}\|\u_t\|_{\V}^2,\label{S25}\\
			E_4& \leq \|\u\|_{\widetilde{\L}^4} \|\nabla\u_{2t}\|_{\H}\|\u_t\|_{\widetilde{\L}^4}\leq \frac{3}{ \lambda_1 \mu}\|\u\|_{\V}^2 \|\u_{2t}\|_{\V}^2+\frac{\mu}{6}\|\u_t\|_{\V}^2.	\label{S26}
		\end{align}
		Let us define $h_2(\u):=|\u|^{r-1}$. Using Taylor's formula and H\"older's, Young's, Agmon's and Gagliardo-Nirenberg's  inequalities, it can be deduced that
		\begin{align}\label{S27}
			E_5 \nonumber&=-\beta r\big(|\u_1|^{r-1}\u_{1t}-|\u_2|^{r-1}\u_{2t},\u_{1t}-\u_{2t}\big)\nonumber\\&=-\beta r\big\{(|\u_1|^{r-1}(\u_{1t}-\u_{2t}),\u_{1t}-\u_{2t})+((|\u_1|^{r-1}-|\u_2|^{r-1})\u_{2t},\u_{1t}-\u_{2t})\big\}
			\nonumber\\&\leq-\beta r((|\u_1|^{r-1}-|\u_2|^{r-1})\u_{2t},\u_{1t}-\u_{2t}) 
			\nonumber\\&=-\beta r\left(\int_0^1h'_2(\theta\u_1+(1-\theta)\u_2) (\u_1-\u_2) \d \theta \u_{2t},\u_{1t}-\u_{2t}\right)
			\nonumber\\& \leq  \beta(r-1) r
			\left(\|\u_1\|_{\widetilde{\L}^\infty}+\|\u_2\|_{\widetilde{\L}^\infty}\right)^{r-2}\|\u_1-\u_2\|_{\widetilde{\L}^6}\|\u_{2t}\|_{\widetilde{\L}^3}\|\u_{1t}-\u_{2t}\|_{\H}\nonumber\\&\leq C
			\|\u_{2t}\|_{\widetilde{\L}^3}^2\|\u_{1t}-\u_{2t}\|_{\H}^2+C\left(\|\u_1\|_{\widetilde{\L}^\infty}+\|\u_2\|_{\widetilde{\L}^\infty}\right)^{2(r-2)}\|\u_1-\u_2\|_{\widetilde{\L}^6}^2\nonumber\\&
			\leq C \|\u_{2t}\|_{\V}^2
			\|\u_{t}\|_{\H}^2+C\left(\|\u_1\|_{\H}^\frac{1}{2}\|\u_1\|_{\H^2(\Omega)\cap \V}^\frac{1}{2}+\|\u_2\|_{\H}^\frac{1}{2}\|\u_2\|_{\H^2(\Omega)\cap \V}^\frac{1}{2}\right)^{2(r-2)}\|\u\|_{\V}^2,
		\end{align}
		for $r\geq 2$. Substituting the estimates \eqref{S23}-\eqref{S27} in \eqref{S22}, and then integrating from $0$ to $t$, we obtain
		\begin{align}\label{S28}
			&\|\u_t(t) \|_{\H}^2+\mu \int_0^t \|\u_t (s)\|_{\V}^2\d s+\alpha\int_0^t \|\u_t (s)\|_{\H}^2\d s \nonumber\\&\quad\leq \|\u_t(0) \|_{\H}^2+\frac{T}{\alpha}\left(\|\f\|_{\L^2}\|g_{1t}\|_0+\|\f_2\|_{\H}\|g_t\|_0\right)^2 + \frac{6}{ \lambda_1 \mu}\int_0^t\|\u_{1t}(s)\|_{\V}^2\|\u(s)\|_{\V}^2\d s\nonumber\\&\qquad+\frac{6}{\lambda_1 \mu}\int_0^t\|\u_{2t}(s)\|_{\V}^2\|\u(s)\|_{\V}^2 \d s+\frac{6}{\mu}\int_0^t\|\nabla\u_2(s)\|_{\H}^2\|\u_t(s)\|_{\H}^2 \d s\nonumber\\&\qquad+C\int_0^t   \bigg\{\left(\|\u_1(s)\|_{\H}^\frac{1}{2}\|\u_1(s)\|_{\H^2(\Omega)\cap \V}^\frac{1}{2}+\|\u_2(s)\|_{\H}^\frac{1}{2}\|\u_2(s)\|_{\H^2(\Omega)\cap \V}^\frac{1}{2}\right)^{2(r-2)}\|\u(s)\|_{\V}^2\nonumber\\&\qquad+\|\u_{2t}(s)\|_{\V}^2
			\|\u_{t}(s)\|_{\H}^2\bigg\}\d s,
		\end{align}
		for all $t \in [0,T]$. An application of Gronwall's inequality in \eqref{S28} gives the following estimate
		\begin{align}\label{3.76}
			&\sup_{t\in[0,T]}\|\u_t(t) \|_{\H}^2+\mu \int_0^T \|\u_t (t)\|_{\V}^2\d t +\alpha\int_0^T \|\u_t (t)\|_{\H}^2\d t \nonumber\\&\leq \bigg\{\|\u_t(0) \|_{\H}^2+\frac{T}{ \alpha }\big(\|\f\|_{\L^2}\|g_{1t}\|_0+\|\f_2\|_{\H}\|g_t\|_0\big)^2 \nonumber\\&\quad+ \frac{6}{ \lambda_1 \mu}\sup_{t\in[0,T]}\|\u(t)\|_{\V}^2\int_0^T\left(\|\u_{1t}(t)\|_{\V}^2+\|\u_{2t}(t)\|_{\V}^2 \right)\d t \nonumber\\&\quad+C T \sup_{t\in[0,T]}\|\u(t)\|_{\V}^2 \sup_{t\in[0,T]}\left(\|\u_1(t)\|_{\H}^\frac{1}{2}\|\u_1(t)\|_{\H^2(\Omega)\cap \V}^\frac{1}{2}+\|\u_2(t)\|_{\H}^\frac{1}{2}\|\u_2(t)\|_{\H^2(\Omega)\cap \V}^\frac{1}{2}\right)^{2(r-2)}\bigg\}\nonumber\\&\quad \times\exp \bigg(CT\sup_{t\in[0,T]}\|\nabla\u_2(t)\|_{\H}^2+C\int_0^T\|\u_{2t}(t)\|_{\V}^2\d t\bigg),
		\end{align}
		for all $t\in[0,T]$. From \eqref{S2}, one can easily see that 
		\begin{align*}
			\u_t(0)&=\mu \Delta \u_0-(\u_{10} \cdot \nabla)\u_0\nonumber-(\u_0 \cdot \nabla)\u_{20}-\alpha \u_0-\beta \left(|\u_{10}|^{r-1}\u_{10}-|\u_{20}|^{r-1}\u_{20}\right)\nonumber\\&\quad-\nabla p_0+\f g_{1}(0)+\f_2 g(0),
		\end{align*}
		For any $\boldsymbol{\upsilon}\in\H$, we get 
		\begin{align*}
			|(\u_t(0),\boldsymbol{\upsilon})|&\leq\big(\mu\| \Delta \u_0\|_{\H}+\|\u_{10}\|_{\wi\L^{\infty}}\|\nabla\u_0\|_{\H}+\|\u_0\|_{\wi\L^{\infty}}\|\nabla\u_{20}\|_{\H}+\alpha\|u_0\|_{\H}\nonumber\\&\quad+C(\|\u_{10}\|_{\H}^{r-1}+\|\u_{20}\|_{\H}^{r-1})\|\u_0\|_{\H}+\|\f\|_{\L^2}\|g_1\|_{0}+\|\f_2\|_{\L^2}\|g\|_{0}\big)\|\boldsymbol{\upsilon}\|_{\H}\nonumber\\&\leq C\big(\|\u_0\|_{\H^2(\Omega)\cap\V}+\|\f\|_{\L^2}+\|g\|_0\big)\|\boldsymbol{\upsilon}\|_{\H},
		\end{align*}
		where we have used H\"older's and Agmon's inequalities. Since, it is true for any $\boldsymbol{\upsilon}\in\H$, we easily have $\|\u_t(0)\|_{\H}\leq C\big(\|\u_0\|_{\H^2(\Omega)\cap\V}+\|\f\|_{\L^2}+\|g\|_0\big)$. 
		Using the energy estimates given in Lemmas \ref{lemma1}-\ref{lem2.5} and \eqref{S111} in the inequality \eqref{3.76}, we obtain the following estimate:
		\begin{align*}
			\sup_{t\in[0,T]}\|\u_t(t) \|_{\H}^2 \leq C \left( \|\u_0\|_{\H^2(\Omega) \cap \V}^2+\|\f\|_{\L^2}^2+\|g\|_0^2+\|g_t\|_0^2 \right),
		\end{align*}
		and as a result, we have
		\begin{align}\label{S29}
			\|\u_t(\cdot,T) \|_{\H}\leq C\big(\|\u_0\|_{\H^2(\Omega) \cap \V}+\|\f\|_{\L^2}+\|g\|_0+\|g_t\|_0\big).
		\end{align}
		Using the final overdetermination data in \eqref{S2}, it can be seen that
		\begin{align*}
			\f g_1+\f_2 g&=\u_t(\cdot,T)+(\boldsymbol{\varphi}_1 \cdot \nabla)\boldsymbol{\varphi}\nonumber+(\boldsymbol{\varphi} \cdot \nabla)\boldsymbol{\varphi}_2-\mu \Delta \boldsymbol{\varphi}+\nabla \psi\\&\quad+\beta \left(|\boldsymbol{\varphi}_1|^{r-1}\boldsymbol{\varphi}_1-|\boldsymbol{\varphi}_2|^{r-1}\boldsymbol{\varphi}_2\right),
		\end{align*}
		which leads to
		\begin{align}\label{S30}
			g_T\|\f\|_{\L^2}&\leq\|\f g_1\|_{\L^2}\nonumber\\&\leq \|\u_t(\cdot,T)\|_{\H}+\|\boldsymbol{\varphi}_1\|_{\widetilde{\L}^\infty}\|\nabla \boldsymbol{\varphi}\|_{\H}+\|\boldsymbol{\varphi}\|_{\widetilde{\L}^4}\|\nabla \boldsymbol{\varphi}_2\|_{\widetilde{\L}^4}\nonumber\\&\quad+\|\nabla \psi-\mu \Delta \boldsymbol{\varphi}\|_{\H}+C\beta(\|\boldsymbol{\varphi}_1\|_{\widetilde{\L}^{\infty}}^{r-1}+\|\boldsymbol{\varphi}_2\|_{\widetilde{\L}^{\infty}}^{r-1})\|\boldsymbol{\varphi}\|_{\H}+\|\f_2\|_{\L^2}\|g\|_0.
		\end{align}
		Using Agmon's and Gagliardo-Nirenberg's inequalities, and Sobolev's embedding theorem, we obtain
		\begin{align}\label{S31}
			g_T\|\f\|_{\L^2}&\leq
			\|\u_t(\cdot,T)\|_{\H}+C\|\boldsymbol{\varphi}_1\|_{\H}^\frac{1}{2}\|\boldsymbol{\varphi}_1\|_{\H^2(\Omega) \cap \V}^\frac{1}{2}\|\nabla \boldsymbol{\varphi}\|_{\H}+\|\nabla \psi-\mu \Delta \boldsymbol{\varphi}\|_{\H}\nonumber\\&\quad+C\|\boldsymbol{\varphi}\|_{\H}^\frac{1}{2}\|\nabla\boldsymbol{\varphi}\|_{\H}^\frac{1}{2}\| \boldsymbol{\varphi}_2\|_{\H}^\frac{1}{4}\| \boldsymbol{\varphi}_2\|_{\H^2(\Omega)\cap \V}^\frac{3}{4}+\|\f_2\|_{\L^2}\|g\|_0\nonumber\\&\quad +C\beta(\|\boldsymbol{\varphi}_1\|_{\H}^{\frac{r-1}{2}}\|\boldsymbol{\varphi}_1\|_{\H^2\cap\V}^{\frac{r-1}{2}}+\|\boldsymbol{\varphi}_2\|_{\H}^{\frac{r-1}{2}}\|\boldsymbol{\varphi}_2\|_{\H^2\cap\V}^{\frac{r-1}{2}})\|\boldsymbol{\varphi}\|_{\H}\nonumber\\&\leq
			\|\u_t(\cdot,T)\|_{\H}+C\left(\| \nabla\boldsymbol{\varphi}\|_{\H}+\|\nabla \psi-\mu \Delta \boldsymbol{\varphi}\|_{\H}+\|g\|_0\right).
		\end{align}
		Substituting \eqref{S29} in \eqref{S31}, one can easily deduce that 
		\begin{align*}
			\|\f\|_{\L^2}\leq C\left(\|\u_0\|_{\H^2(\Omega) \cap \V}+\|g\|_0+\|g_t\|_0+\|\nabla \boldsymbol{\varphi}\|_{\H}+\|\nabla \psi-\mu \Delta \boldsymbol{\varphi}\|_{\H}\right),
		\end{align*}
		which is \eqref{S21}.
		\vskip 0.2 cm
		\noindent\textbf{Case II:} \emph{$d=3$ and $r \geq 3$.} Using H\"older's, Poincar\'e's and Young's inequalities, we find
		\begin{align}
			E_2 \nonumber&\leq \|\u_{1t}(t)\|_{\widetilde{\L}^4} \|\nabla\u(t)\|_{\H}\|\u_t(t)\|_{\widetilde{\L}^4}\leq C \|\u_{1t}(t)\|_{\H}^{\frac{1}{4}}\|\u_{1t}(t)\|_{\V}^{\frac{3}{4}} \|\u(t)\|_{\V}\|\u_t(t)\|_{\H}^{\frac{1}{4}}\|\u_t(t)\|_{\V}^{\frac{3}{4}}\\&    \leq \frac{C}{\lambda_1^{\frac{1}{4}}}\|\u_{1t}(t)\|_{\V}\|\u(t)\|_{\V}\|\u_t(t)\|_{\V}\leq C \|\u_{1t}(t)\|_{\V}^2 \|\u(t)\|_{\V}^2+\frac{\mu}{6}\|\u_t(t)\|_{\V}^2,\label{S32}\\
			E_3 &\leq  \|\nabla\u_2(t)\|_{\H}\|\u_t(t)\|_{\widetilde{\L}^4}^2 \leq C  \|\nabla\u_2(t)\|_{\H}\|\u_t(t)\|_{\H}^\frac{1}{2}\|\u_t(t)\|_{\V}^\frac{3}{2} \nonumber\\&\leq C \|\nabla\u_2(t)\|_{\H}^4\|\u_t(t)\|_{\H}^2+\frac{\mu}{6}\|\u_t(t)\|_{\V}^2.	\label{S33}\\
			E_4 &\leq \|\u(t)\|_{\widetilde{\L}^4} \|\nabla\u_{2t}(t)\|_{\H}\|\u_t(t)\|_{\widetilde{\L}^4}\leq C \|\u_{2t}(t)\|_{\V}^2 \|\u(t)\|_{\V}^2+\frac{\mu}{6}\|\u_t(t)\|_{\V}^2.\label{S34}
		\end{align}
		A calculation similar to \eqref{S27} yields
		\begin{align}\label{S35}
			E_5  \leq C  \|\u_{2t}\|_{\V}^2
			\|\u_{t}\|_{\H}^2+C\left(\|\u_1\|_{\V}^\frac{1}{2}\|\u_1\|_{\H^2(\Omega)\cap \V}^\frac{1}{2}+\|\u_2\|_{\V}^\frac{1}{2}\|\u_2\|_{\H^2(\Omega)\cap \V}^\frac{1}{2}\right)^{2(r-2)}\|\u\|_{\V}^2.
		\end{align}
		Substituting the estimates \eqref{S23} and \eqref{S32}-\eqref{S35} in \eqref{S22}, and then integrating from $0$ to $t$, we obtain
		\begin{align}\label{S36}
			&\|\u_t(t) \|_{\H}^2+\mu \int_0^t \|\u_t (s)\|_{\V}^2\d s+\alpha\int_0^t \|\u_t (s)\|_{\H}^2\d s \nonumber\\&\quad\leq \|\u_t(0) \|_{\H}^2+\frac{T}{\alpha}\big(\|\f\|_{\L^2}\|g_{1t}\|_0+\|\f_2\|_{\H}\|g_t\|_0\big)^2 + C\int_0^t\big(\|\u_{1t}(s)\|_{\V}^2+\|\u_{2t}(s)\|_{\V}^2\big)\|\u(s)\|_{\V}^2\d s\nonumber\\&\qquad+ C\int_0^t\|\nabla\u_2(s)\|_{\H}^4\|\u_t(s)\|_{\H}^2 \d s+C\int_0^t \bigg\{ \|\u_{2t}(s)\|_{\V}^2
			\|\u_{t}(s)\|_{\H}^2\nonumber\\&\qquad+\left(\|\u_1(s)\|_{\V}^\frac{1}{2}\|\u_1(s)\|_{\H^2(\Omega)\cap \V}^\frac{1}{2}+\|\u_2(s)\|_{\V}^\frac{1}{2}\|\u_2(s)\|_{\H^2(\Omega)\cap \V}^\frac{1}{2}\right)^{2(r-2)}\|\u(s)\|_{\V}^2\bigg\}\d s,
		\end{align}
		for all $t \in [0,T]$. An application of Gronwall's inequality \eqref{S36} gives 
		\begin{align*}
			&\sup_{t\in[0,T]}\|\u_t(t) \|_{\H}^2+\mu \int_0^T \|\u_t (t)\|_{\V}^2\d t+\alpha\int_0^T \|\u_t (t)\|_{\H}^2\d t
			\nonumber\\&\leq \exp\bigg(CT\sup_{t\in[0,T]}\|\nabla\u_2(t)\|_{\H}^4+C\int_0^T\|\u_{2t}(t)\|_{\V}^2\d t\bigg)\bigg\{\|\u_t(0) \|_{\H}^2\\&\quad +\frac{T}{\alpha}\left(\|\f\|_{\L^2}\|g_{1t}\|_0+\|\f_2\|_{\H}\|g_t\|_0\right)^2 + C\sup_{t\in[0,T]} \|\u(t)\|_{\V}^2\int_0^T(\|\u_{1t}(t)\|_{\V}^2+\|\u_{2t}(t)\|_{\V}^2)\d t \nonumber\\&\quad+CT\sup_{t\in[0,T]}\|\u(t)\|_{\V}^2 \sup_{t\in[0,T]}\left(\|\u_1(s)\|_{\V}^\frac{1}{2}\|\u_1(s)\|_{\H^2(\Omega)\cap \V}^\frac{1}{2}+\|\u_2(s)\|_{\V}^\frac{1}{2}\|\u_2(s)\|_{\H^2(\Omega)\cap \V}^\frac{1}{2}\right)^{2(r-2)}\bigg\}.
		\end{align*}
		Using the energy estimates given in Lemmas \ref{lemma1}-\ref{lem2.5} and \eqref{S111} in the above inequality, we obtain the following estimate
		\begin{align*}
			\sup_{t\in[0,T]}\|\u_t(t) \|_{\H}^2 \leq C \left( \|\u_0\|_{\H^2(\Omega) \cap \V}^2+\|\f\|_{\L^2}^2+\|g\|_0^2+\|\f\|_{\L^2}+\|g_t\|_0^2 \right),
		\end{align*}
		and thus, we get
		\begin{align}\label{S37}
			\|\u_t(\cdot,T) \|_{\H}\leq C\big(\|\u_0\|_{\H^2(\Omega) \cap \V}+\|\f\|_{\L^2}+\|g\|_0+\|g_t\|_0\big).
		\end{align}
		Hence, using Agmon's and Gagliardo-Nirenberg's inequalities, and Sobolev's embedding theorem in \eqref{S30} gives
		\begin{align}\label{S38}
			g_T\|\f\|_{\L^2}&\leq
			\|\u_t(\cdot,T)\|_{\H}+C\|\boldsymbol{\varphi}_1\|_{\V}^\frac{1}{2}\|\boldsymbol{\varphi}_1\|_{\H^2(\Omega) \cap \V}^\frac{1}{2}\|\nabla \boldsymbol{\varphi}\|_{\H}\nonumber\\&\quad+C\|\boldsymbol{\varphi}\|_{\H}^\frac{1}{4}\|\nabla\boldsymbol{\varphi}\|_{\H}^\frac{3}{4}\| \boldsymbol{\varphi}_2\|_{\H}^\frac{1}{8}\| \boldsymbol{\varphi}_2\|_{\H^2(\Omega)\cap \V}^\frac{7}{8}+\|\nabla \psi-\mu \Delta \boldsymbol{\varphi}\|_{\H}\nonumber\\&\quad +C\beta\left(\|\boldsymbol{\varphi}_1\|_{\V}^{\frac{r-1}{2}}\|\boldsymbol{\varphi}_1\|_{\H^2(\Omega)\cap\V}^{\frac{r-1}{2}}+\|\boldsymbol{\varphi}_2\|_{\V}^{\frac{r-1}{2}}\|\boldsymbol{\varphi}_2\|_{\H^2(\Omega)\cap\V}^{\frac{r-1}{2}}\right)\|\boldsymbol{\varphi}\|_{\H}+\|\f_2\|_{\L^2}\|g\|_0\nonumber\\&\leq
			\|\u_t(\cdot,T)\|_{\H}+C\left(\| \nabla\boldsymbol{\varphi}\|_{\H}+\|\nabla \psi-\mu \Delta \boldsymbol{\varphi}\|_{\H}+\|g\|_0\right).
		\end{align}
		Substituting \eqref{S37} in \eqref{S38}, we finally obtain  \eqref{S21}.
	\end{proof}

	\medskip\noindent
	{\bf Data availability statement:}
	No new data were created or analysed in this study.
	
	\medskip\noindent
	{\bf Acknowledgments:} P. Kumar and M. T. Mohan would  like to thank the Department of Science and Technology (DST), India for Innovation in Science Pursuit for Inspired Research (INSPIRE) Faculty Award (IFA17-MA110). 
	
\end{document}